\setlist[description]{leftmargin=\parindent}
\numberwithin{equation}{section}
\newtheorem{theorem}{Theorem}[section]
\newtheorem{definition}[theorem]{Definition}
\newtheorem{lemma}[theorem]{Lemma}
\newtheorem{proposition}[theorem]{Proposition}
\newtheorem{corollary}[theorem]{Corollary}
\newtheorem{notation}[theorem]{Notation}
\newtheorem{remark}[theorem]{Remark}
\newcommand{\diam}{\operatorname{diam}}
\newcommand{\inndiam}{\operatorname{inn\,diam}}
\newcommand{\dist}{\operatorname{dist}} 
\newcommand{\Exc}{\operatorname{Exc}}
\newcommand{\supp}{\operatorname{supp}}
\newcommand{\ol}{\overline}
\newcommand{\wt}{\widetilde}
\newcommand{\coleq}{\mathrel{\mathop:}=}
\newcommand{\eqcol}{=\mathrel{\mathop:}}
\definecolor{lightred}{rgb}{1,.5,.5}
\definecolor{lightblue}{rgb}{.5,.5,1}
\renewcommand{\d}{\mathrm d}
\newcommand{\R}{\mathbb R}
\title[Random elliptic operators with degenerate coefficients]{Regularity of random elliptic operators with degenerate coefficients and applications to stochastic homogenization}
\author[P. Bella]{Peter Bella}
\address{Peter Bella \hfill\break
	Faculty of Mathematics, TU Dortmund University, 
	Vogelpothsweg 87, 44227 Dortmund, Germany}
\email{peter.bella@tu-dortmund.de}
\author[M. Kniely]{Michael Kniely}
\address{Michael Kniely \hfill\break
	Faculty of Mathematics, TU Dortmund University, 
	Vogelpothsweg 87, 44227 Dortmund, Germany}
\email{michael.kniely@tu-dortmund.de}
\curraddr{Weierstrass Institute for Applied Analysis and Stochastics, 
	Mohrenstrasse 39, 10117 Berlin, Germany}
\email{michael.kniely@wias-berlin.de}
\subjclass[2020]{Primary 35J70; Secondary 35R60, 35B65, 35B27}
\keywords{Degenerate elliptic equations; random coefficients; large-scale regularity; stochastic homogenization; exponential moment bounds; sensitivity estimates}
\begin{document}
	\begin{abstract}
		We consider degenerate elliptic equations of second order in divergence form with a symmetric random coefficient field $a$. Extending the work of the first author, Fehrman, and Otto [Ann.\ Appl.\ Probab.\ \textbf{28} (2018), no.\ 3, 1379--1422], who established the large-scale $C^{1,\alpha}$ regularity of $a$-harmonic functions in a degenerate situation, we provide stretched exponential moments for the minimal radius $r_*$ describing the minimal scale for this $C^{1,\alpha}$ regularity. As an application to stochastic homogenization, we partially generalize results by Gloria, Neukamm, and Otto [Anal.\ PDE \textbf{14} (2021), no.\ 8, 2497--2537] on the growth of the corrector, the decay of its gradient, and a quantitative two-scale expansion to the degenerate setting. On a technical level, we demand the ensemble of coefficient fields to be stationary and subject to a spectral gap inequality, and we impose moment bounds on $a$ and $a^{-1}$. We also introduce the ellipticity radius $r_e$ which encodes the minimal scale where these moments are close to their positive expectation value. 
	\end{abstract}
	\maketitle
	%\vfill
	%\tableofcontents
	%\vfill

\section{Introduction and main results}
In these notes, we present some ideas to generalize results from stochastic homogenization of uniformly elliptic operators $-\nabla \cdot a \nabla$ to the case of degenerate and unbounded random coefficient fields $a$. The underlying random distribution is always assumed to be stationary and ergodic. To quantify the degeneracy and unboundedness, we impose moment bounds on the norm of $a(x)$ and its pointwise inverse $a(x)^{-1}$, $x \in \mathbb R^d$. A precise collection of our general assumptions is given in Definition \ref{defadmissible}. 

For the sake of a simplified notation, we focus on scalar models where $a : \mathbb R^d \rightarrow \mathbb R^{d \times d}$ is a matrix field rather than a field of rank-4 tensors. But since we do not rely on results from scalar PDE theory like maximum principles, we believe that our methods also extend to systems provided that all arguments involving $|a|$ or $|a^{-1}|$ also apply to the respective generalizations $\mu$ and $\lambda^{-1}$ as defined by the first author, Fehrman, and Otto \cite{BFO18}. As we shall explain in more detail below (see Remark \ref{rem:a12}), we currently have to restrict ourselves for technical reasons to symmetric matrix coefficient fields $a(x) = a(x)^T$; this issue might be resolved by working only with the scalar quantities $\mu$ and $\lambda^{-1}$. However, this is beyond the scope of this contribution and is left as a subject for future work. 
An alternative approach for non-symmetric coefficient fields $a(x)$ is outlined in \cite[Chapter 10]{AKM19}, where a variational formulation based on a ``double-variable'' approach is presented. 

To some extent, this paper continues the studies of the first author, Fehrman, and Otto \cite{BFO18}, where the large-scale $C^{1,\alpha}$ regularity and a first-order Liouville principle for $a$-harmonic functions were derived in the same setting. It is one of the goals of the present contribution to provide stretched exponential moments for the minimal radius $r_*$, which determines the minimal scale for the $C^{1,\alpha}$ regularity. Moreover, we provide quantitative estimates on the growth of the corrector and the decay of its gradient, and we derive a quantitative two-scale expansion in our degenerate setting. 

The starting point of our analysis is the work of Gloria, Neukamm, and Otto \cite{GNO20} on the large-scale regularity of random elliptic operators. The main achievements of this publication are large-scale Schauder and large-scale Calder\'on--Zygmund estimates valid on scales larger than the minimal radius $r_*$. Their approach is in turn motivated by the ideas of Avellaneda and Lin \cite{AL87}, who established a large-scale regularity theory for elliptic operators with periodic coefficients, hence, on the torus. This enabled the authors to apply compactness arguments which are generally not available. %However, their philosophy is to establish a regularity theory for the operator $-\nabla \cdot a \nabla$ by transferring regularity results from the homogenized operator $-\nabla \cdot a_\mathrm{hom} \nabla$ to the original one. 
Previous preprints \cite{GNO20v2,GNO20v3} of \cite{GNO20} follow in some cases different strategies which can be equally valuable as they are sometimes better suited for an application in our situation. A key ingredient in all three versions are functional inequalities (e.g.\ spectral gap and logarithmic Sobolev inequalities), which allow to quantify certain aspects of the random ensemble in an advantageous manner. A comparison of various forms of functional inequalities and applications is given by Duerinckx and Gloria \cite{DG20}. The basis for our results on the corrector in stochastic homogenization and the two-scale expansion is the contribution by Gloria, Neukamm, and Otto \cite{GNO21} on quantitative estimates in stochastic homogenization.

\begin{definition}[Ensemble of coefficient fields]
\label{defadmissible}
Let $\Omega$ be the space of symmetric coefficient fields $a : \mathbb R^d \rightarrow \mathbb R^{d \times d}$, $d \geq 2$, and let $\langle \cdot \rangle$ denote an \emph{ensemble} of coefficient fields $a$, i.e.\ a probability measure on $\Omega$, which we assume to be 
\begin{itemize}
	\item \emph{stationary}, i.e.\ the probability distributions of $a$ and $a(x + \cdot)$ coincide for all $x \in \mathbb R^d$,
	\item \emph{ergodic}, i.e.\ every translation invariant random variable is almost surely constant.
\end{itemize}
For any $a \in \Omega$, we define the (space-dependent) quantities 
\[
\lambda \coleq \big| a^{-1} \big|^{-1} %\inf_{\xi \in \mathbb R^d} \frac{\xi \cdot a \xi}{|\xi|^2} 
\qquad \text{and} \qquad \mu \coleq |a|. %\sup_{\xi \in \mathbb R^d} \frac{|a\xi|^2}{\xi \cdot a \xi}.
\]
We suppose $\lambda(x), \mu(x) \in (0,\infty)$ for a.e.\ $x \in \mathbb R^d$, and that $p, q \in (1, \infty)$ exist satisfying 
\begin{align}
\label{eq:defk}
\big\langle \mu^p \big\rangle^\frac1p + \big\langle \lambda^{-q} \big\rangle^\frac1q \eqcol K < \infty \qquad \text{and} \qquad \frac1p + \frac 1q < \frac 2d,
\end{align}
where \eqref{eq:defk} is independent of $x \in \mathbb R^d$ due to the stationarity of the ensemble $\langle \cdot \rangle$. 
\end{definition}

The concept of imposing stochastic moment bounds on the coefficient field $a(x)$ instead of assuming uniform ellipticity was successfully applied in a similar context by Chiarini and Deuschel to prove an invariance principle for symmetric diffusion processes on $\mathbb R^d$ \cite{CD16}. 
In the context of homogenization, condition \eqref{eq:defk} was first imposed by Andres, Deuschel, and Slowik \cite{ADS15} for an ergodic random conductance model and later also used in a time-dependent ergodic version thereof \cite{DS16}. In our situation, the purpose of \eqref{eq:defk} is to guarantee the sublinearity of the corrector (cf.\ Remark \ref{remarkcorrectors}) and to allow for specific Sobolev embeddings (e.g.\ in Lemma \ref{lemmagrowth}). Only recently, the first author and Sch\"affner \cite{BS21} showed that the relaxed version 
\begin{align}
\label{eq:pq_optimal}
\frac{1}{p} + \frac{1}{q} < \frac{2}{d-1}
\end{align}
guarantees local boundedness and the existence of a Harnack inequality for solutions to linear, nonuniformly elliptic equations. The same result was already proven by Trudinger \cite{Tru71} under the more restrictive version in \eqref{eq:defk}. Condition \eqref{eq:pq_optimal} is, in addition, optimal in the sense that local boundedness is generally not available if the right-hand side is replaced by $\tfrac{2}{d-1} + \varepsilon$ for any $\varepsilon > 0$; we refer to the references in \cite{BS21} for further details. As an application to stochastic homogenization, the authors show that the pointwise sublinearity of the corrector, which was proven by Chiarini and Deuschel \cite{CD16} in a similar framework assuming \eqref{eq:defk}, also holds under condition \eqref{eq:pq_optimal}. 
Stochastic moment bounds of the type \eqref{eq:defk} appeared recently also in studies on the regularity properties of non-uniformly parabolic operators; see \cite{BS22, ACS21} and the references therein. %and, in particular, random conductance models in degenerate and unbounded settings.

Related to Birkhoff's ergodic theorem (see e.g.\ \cite{Kre85}) guaranteeing that 
\begin{align}
\label{eq:ergodic}
\lim_{r \rightarrow \infty} \ \Big( \fint_{B_{r}(0)} \mu^p \Big)^{\frac 1p} + \Big( \fint_{B_{r}(0)} \lambda^{-q} \Big)^{\frac 1q} = K
\end{align}
for a.e.\ coefficient field $a$, we subsequently define the \emph{ellipticity radius} $r_e$, which determines the minimal scale on which the system behaves approximately elliptic.

\begin{definition}
\label{def:ellipticity_radius}
For $K$ as in \eqref{eq:defk}, we define the \emph{ellipticity radius} $r_e \geq 1$ as the random variable 
\begin{align*}
r_e \coleq \inf \bigg\{ r \geq 1 \: \Big| \: \forall \, \rho > r : \Big( \fint_{B_{\rho}(0)} \mu^p \Big)^{\frac 1p} + \Big( \fint_{B_{\rho}(0)} \lambda^{-q} \Big)^{\frac 1q} \le 4K \bigg\}.
\end{align*}
%where $K$ is defined in \eqref{eq:defk}.
\end{definition}

We subsequently recall standard notions in stochastic homogenization including the extended corrector $(\phi, \sigma) = ((\phi_i)_i, (\sigma_{ijk})_{ijk})$ and the homogenized field $a_\mathrm{hom}$. Existence and uniqueness of the extended corrector will be discussed afterwards.

\begin{definition}[Definition of the extended corrector $(\phi, \sigma)$] 
\label{defcorrector}
In the situation of Definition \ref{defadmissible} and for given $\xi \in \mathbb R^d$, one calls the sublinear solution $\phi_\xi$ of $-\nabla \cdot a (\xi + \nabla \phi_\xi) = 0$ on $\mathbb R^d$ the \emph{corrector} associated to $\xi$. Specifically for $\xi$ being a canonical basis vector, one considers the corrector $\phi_i$, $1 \leq i \leq d$, being a solution to
\begin{align}
\label{eq:corr_phi}
-\nabla \cdot q_i = 0, \qquad q_i \coleq a ( e_i + \nabla \phi_i ).
\end{align}
The vector $q_i$ is called the $i$th component of the \emph{flux} and one introduces the \emph{flux correction} $\sigma_{ijk}$, $1 \leq i,j,k \leq d$, as a vector-valued potential solving 
\begin{align}
\label{eq:corr_sigma}
\nabla \cdot \sigma_i = q_i - \langle q_i \rangle, \qquad -\Delta \sigma_i = \nabla \times q_i \coleq (\partial_j q_{ik} - \partial_k q_{ij})_{jk}.
\end{align}
Finally, one defines the \emph{homogenized field} $a_\mathrm{hom}$ via $a_\mathrm{hom} e_i \coleq \langle q_i \rangle$.
\end{definition}

Concerning the possible degeneracy and unboundedness of the coefficients $a(x)$, we mention that it is obviously not possible to perform estimates like $c |v|^2 \leq v \cdot a v \leq C |v|^2$ for $v \in \mathbb R^d$ with uniform constants $C \geq c > 0$. It is, therefore, advantageous to introduce a separate notation for such quadratic forms and also for matrix products $a^\frac12 M a^\frac 12$ with some $M \in \mathbb R^{d \times d}$.

\begin{notation}
\label{not:a12}
For $a \in \mathbb R^{d \times d}$, $M \in \mathbb R^{d \times d}$, and $v \in \mathbb R^d$, we set 
$
|v|_{a}^2 \coleq v \cdot a v %\quad \text{and} \quad |v|_{a^{-1}}^2 \coleq v \cdot \big( a^S \big)^{-1} v. 
$
and 
$|M|_a \coleq \big|a^\frac12 M a^\frac12\big|$ where $|\cdot|$ denotes the spectral norm on $\mathbb R^{d \times d}$. 
For any measurable $D \subset \mathbb R^d$, we further abbreviate $L^2_a(D)^d \coleq \big\{ f : D \rightarrow \mathbb R^d \; \big| \; f \cdot a f \in L^1(D) \big\}$ and $H^1_a(D) \coleq \big\{ u : D \rightarrow \mathbb R \; \big| \; \nabla u \cdot a \nabla u \in L^1(D) \big\}$.
\end{notation}

A question which typically arises in this context, is concerned with the so-called Liouville principle. For example, given a subquadratic solution $u$ of $-\nabla \cdot a \nabla u = 0$ on $\mathbb R^d$, can one prove that $u = c + \xi \cdot x + \phi_\xi(x)$ for some $\xi \in \mathbb R^d$? For the present setting, the first author, Fehrman, and Otto \cite{BFO18} have shown that such a Liouville property does hold (cf. Remark \ref{remarkcorrectors}). The interest in such a principle also lies in its close relation to Schauder estimates, which has been highlighted by Simon \cite{Sim97}. Moreover, Liouville properties have been established in many situations including stationary and ergodic degenerate systems \cite{BDCKY15}, %higher-order regularity, higher-order correctors, 
higher-order Liouville principles \cite{FO16}, and Liouville theorems for uniformly parabolic systems in a random setting supposing stationarity and ergodicity \cite{BCF19}. 

The existence of an extended corrector $(\phi, \sigma)$ as in Definition \ref{defcorrector} directly follows from \cite[Lemma 1]{BFO18}, while its uniqueness is an immediate consequence of the sublinearity of $(\phi, \sigma)$ \cite[Lemma 2]{BFO18} and a related Liouville principle \cite[Theorem 1]{BFO18}. For the sake of completeness, we recall these results in Remark \ref{remarkcorrectors} below. 
Identity \eqref{eq:sublinear_correctors} shows that the extended corrector $(\phi, \sigma)$ is sublinear w.r.t.\ $\rho$. While \eqref{eq:sublinear_correctors} gives only a qualitative statement, we will prove a quantified version thereof in Corollary \ref{corgrowth}, which will serve as an important tool in the latter part of this paper.

\begin{remark}[Properties of the extended corrector $(\phi, \sigma)$ {\cite[Lemmas 1,2, Theorem 1]{BFO18}, \cite[Proposition 4.1]{CD16}}]
\label{remarkcorrectors}
Under the hypotheses of Definition \ref{defadmissible}, there exist a constant $C > 0$ and random tensor fields $\phi_i$ and $\sigma_{ijk}$, $1 \leq i,j,k \leq d$, satisfying \eqref{eq:corr_phi}--\eqref{eq:corr_sigma} and the skew-symmetry $\sigma_{ijk} = -\sigma_{ikj}$, while the gradient fields are stationary, of vanishing expectation 
\[
\big\langle \nabla \phi_i \big\rangle = \big\langle \nabla \sigma_{ijk} \big\rangle = 0,
\]
and having bounded moments 
\[
\sum_{i=1}^d \big\langle |\nabla \phi_i|_a^2 \big\rangle + \sum_{i=1}^d \Big\langle |\nabla \phi_i|^\frac{2q}{q+1} \Big\rangle^\frac{q+1}{2q} + \sum_{i,j,k=1}^d \Big\langle |\nabla \sigma_{ijk}|^\frac{2p}{p+1} \Big\rangle^\frac{p+1}{2p} \leq C K,
\]
where $K$ is the constant from Definition \ref{defadmissible}. In addition, $(\phi, \sigma)$ is sublinear in the sense 
\begin{align}
\label{eq:sublinear_correctors}
\lim_{\rho \rightarrow \infty} \max \bigg\{ \frac1{\rho} \Big( \fint_{B_\rho} \Big| \phi - \fint_{B_\rho} \phi \Big|^\frac{2p}{p-1} \Big)^\frac{p-1}{2p}, \, \frac1{\rho} \Big( \fint_{B_\rho} \Big| \sigma - \fint_{B_\rho} \sigma \Big|^\frac{2q}{q-1} \Big)^\frac{q-1}{2q} \bigg\} = 0, 
\end{align}
and a.e.\ coefficient field $a$ satisfies the following Liouville principle: Any solution $u \in H^1_a(\mathbb R^d)$ to $-\nabla \cdot a \nabla u = 0$ in $\mathbb R^d$ subject to 
\[
\lim_{R \rightarrow \infty} R^{-(1 + \alpha)} \Big( \fint_{B_R} |u|^\frac{2p}{p-1} \Big)^\frac{p-1}{2p} = 0
\]
for some $\alpha \in (0, 1)$ admits the representation $u(x) = c + \xi \cdot x + \phi_\xi(x)$ for some $c \in \mathbb R$ and $\xi \in \mathbb R^d$. 
Finally, the homogenized field $a_\mathrm{hom}$ is uniformly elliptic. 
\end{remark}

The following lemma is basically a consequence of the collected results in Remark \ref{remarkcorrectors} and \cite[Theorem 2]{BFO18}. The claimed mean-value property in \eqref{eq:meanvalues} is a direct consequence of \eqref{eqexcessdecay} which is typically referred to as a large-scale $C^{1,\alpha}$ regularity estimate. The first large-scale regularity result for a uniformly elliptic, scalar equation was obtained by Marahrens and Otto \cite{MO15}, where the ergodicity of the random ensemble was encoded by means of a logarithmic Sobolev inequality. For elliptic systems with stationary and coercive coefficients, the first author and Otto \cite{BO16} derived moment bounds on the corrector gradient by employing either a logarithmic Sobolev inequality or a spectral gap estimate. 
We also mention the large-scale regularity theory for scalar equations in a random environment developed by Armstrong and Smart \cite{AS16}. A crucial ingredient of their approach is the assumption of a finite range of dependence for the symmetric coefficient field. 
More recently, large-scale regularity results have also been shown for the random conductance model by Armstrong and Dario \cite{AD18}. They prove that the corresponding solutions on supercritical percolation clusters are close to harmonic functions on large scales which admit stretched exponential moments. For similar models subject to long-range correlations and decoupling inequalities, Sapozhnikov \cite{Sap17} generalized several results, such as heat kernel bounds and parabolic Harnack inequalities, which have already been known for the Bernoulli percolation.

\begin{lemma}[Large-scale $C^{1,\alpha}$ regularity and a mean-value property for $a$-harmonic functions]
\label{lemmac1alpha}
For any $\alpha \in (0, 1)$ and $K > 0$, there exist constants $C_0$, $C_1$, and $C_2$ such that for all positive radii $r < R$ and $p,q \in (1,\infty)$ satisfying $\frac1p + \frac 1q \leq \frac 2d$ the following holds: If 
\begin{align*}
\Big( \fint_{B_{\rho}} \mu^p \Big)^{\frac 1p} + \Big( \fint_{B_{\rho}} \lambda^{-q} \Big)^{\frac 1q} \le 2K
\end{align*}
and 
\begin{align*}
\max \bigg\{ \frac1{\rho} \Big( \fint_{B_\rho} \Big| \phi - \fint_{B_\rho} \phi \Big|^\frac{2p}{p-1} \Big)^\frac{p-1}{2p}, \, \frac1{\rho} \Big( \fint_{B_\rho} \Big| \sigma - \fint_{B_\rho} \sigma \Big|^\frac{2q}{q-1} \Big)^\frac{q-1}{2q} \bigg\} \leq \frac{1}{C_0}
\end{align*}
%where $\phi$ and $\sigma$ are the correctors from Remark \ref{remarkcorrectors}, 
for all $\rho \in [r,R]$, then any solution $u \in H^1_a(B_R)$ of $-\nabla \cdot a \nabla u = 0$ fulfills the excess-decay 
\begin{align}
\label{eqexcessdecay}
\Exc (r) \leq C_1 \Big( \frac r R \Big)^{2\alpha} \Exc (R),
\end{align}
%for a constant $C>0$ independent of $r$ and $R$ 
where the excess $\Exc(\rho)$ is defined as 
\[
\Exc(\rho) \coleq \inf_{\xi \in \mathbb R^d} \fint_{B_\rho} \big| \nabla u - (\xi + \nabla \phi_\xi) \big|_a^2,
\]
and $\nabla u$ satisfies the mean-value property 
\begin{align}
\label{eq:meanvalues}
\fint_{B_r} |\nabla u|_a^2 \leq C_2 \fint_{B_R} |\nabla u|_a^2.
\end{align}
\end{lemma}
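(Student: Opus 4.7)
The plan is to derive \eqref{eqexcessdecay} by iterating the one-step excess improvement from \cite[Theorem 2]{BFO18} and then to extract \eqref{eq:meanvalues} from \eqref{eqexcessdecay} via a Campanato-type telescoping.

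First I would record the single-scale improvement contained in \cite[Theorem 2]{BFO18}: on a ball $B_\rho$ on which the averaged moments $\fint_{B_\rho}\mu^p$ and $\fint_{B_\rho}\lambda^{-q}$ are close to $K$, and on which the extended corrector $(\phi,\sigma)$ is sufficiently small in the normalized $L^{2p/(p-1)}$, $L^{2q/(q-1)}$ seminorms, any $u \in H^1_a(B_\rho)$ solving $-\nabla\cdot a\nabla u = 0$ obeys a one-step decay $\Exc(\theta\rho)\leq \theta^{2\alpha}\Exc(\rho)$ for some fixed $\theta \in (0,1)$. The two smallness conditions in the hypothesis of the lemma are chosen precisely so that this improvement is available at every $\rho \in [r,R]$ once $C_0$ is taken large enough. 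Iterating dyadically along $\rho_k \coleq \theta^k R$ yields $\Exc(\rho_k)\leq\theta^{2\alpha k}\Exc(R)$, and the quasi-monotonicity of $\rho\mapsto\Exc(\rho)$ inherent in its variational definition bridges the terminal scale to $r$ and supplies \eqref{eqexcessdecay} with an explicit $C_1$.

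For \eqref{eq:meanvalues} I would denote by $\xi_\rho$ the minimizer in $\Exc(\rho)$ and exploit the triangle inequality
\begin{align*}
\Big(\fint_{B_r}|\nabla u|^2\Big)^{1/2} \leq \Big(\fint_{B_r}\big|\nabla u-\xi_r-\nabla\phi_{\xi_r}\big|^2\Big)^{1/2} + |\xi_r| + \Big(\fint_{B_r}|\nabla\phi_{\xi_r}|^2\Big)^{1/2}.
\end{align*}
The first summand is controlled by a constant multiple of $\Exc(r)^{1/2}$ by combining the pointwise bound $|v|^2 \leq \lambda^{-1}|v|_a^2$, H\"older with exponent $q$, and the Meyers-type higher integrability of $\nabla u - \xi_r - \nabla\phi_{\xi_r}$ already present in the proof of \cite[Theorem 2]{BFO18}. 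The third summand is bounded by $C|\xi_r|$ via Caccioppoli applied to the shifted corrector $\phi_{\xi_r}+\xi_r\cdot x$ on $B_{2r}$ combined with the smallness of $\phi$-averages from the hypothesis. Finally, $|\xi_r|$ is controlled by telescoping along the dyadic scales: a routine argument gives $|\xi_{\rho_k}-\xi_{\rho_{k-1}}|^2\leq C\,\Exc(\rho_{k-1})$, which sums geometrically by \eqref{eqexcessdecay}, and the base term $|\xi_R|^2$ is bounded by a constant multiple of $\fint_{B_R}|\nabla u|^2$ by the same triangle inequality applied at scale $R$.

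The main technical subtlety is the passage from the weighted seminorm $|\cdot|_a$ naturally appearing in $\Exc$ to the Euclidean norm on the left-hand side of \eqref{eq:meanvalues}. This step relies on the integrability condition $\frac 1p+\frac 1q\leq\frac 2d$, which is exactly what underlies the Sobolev and Meyers estimates used throughout \cite{BFO18}; consequently no new analytical obstacle arises, and the argument reduces to a careful tracking of the constants $C_0$, $C_1$, and $C_2$.
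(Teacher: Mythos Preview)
Your overall architecture matches the paper's: invoke \cite[Theorem 2]{BFO18} for the excess decay and then run a Campanato telescoping to upgrade to a mean-value property. Two points, however, separate your sketch from a complete argument.

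First, the ``routine argument'' for $|\xi_{\rho_k}-\xi_{\rho_{k-1}}|^2\leq C\,\Exc(\rho_{k-1})$ is not routine without an additional ingredient that you never state: the two-sided non-degeneracy
\[
c|\xi|^2 \;\leq\; \fint_{B_\rho}\big|\xi+\nabla\phi_\xi\big|_a^2 \;\leq\; C|\xi|^2
\]
for all $\rho$ in the relevant range. The paper proves this first (the lower bound via Poincar\'e and the corrector smallness, the upper bound via Caccioppoli), and it is precisely the lower bound that converts $\fint_{B_\rho}|\xi_\rho-\xi_{\rho'}+\nabla\phi_{\xi_\rho-\xi_{\rho'}}|_a^2$ into a control on $|\xi_\rho-\xi_{\rho'}|^2$. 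Without it, neither the telescoping step nor your bound $|\xi_R|^2\lesssim\fint_{B_R}|\nabla u|^2$ goes through.

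Second, you take the Euclidean norm in \eqref{eq:meanvalues} at face value and try to pass from $|\cdot|_a$ to $|\cdot|$ via a Meyers-type higher integrability. This is the wrong route: in the degenerate setting a Meyers gain large enough to absorb the H\"older loss from $\lambda^{-q}$ is not available, and nothing of the sort appears in \cite{BFO18}. The paper in fact proves (and later uses, e.g.\ in the proof of Proposition~\ref{propsensitivity}) the mean-value property in the weighted form $\fint_{B_r}|\nabla u|_a^2\lesssim\fint_{B_R}|\nabla u|_a^2$; once the non-degeneracy is in hand, this falls out in two lines from $\fint_{B_r}|\nabla u|_a^2\lesssim\Exc(r)+|\xi_r|^2$ and the telescoping. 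Stay in the $|\cdot|_a$ norm throughout and the ``main technical subtlety'' you flag simply disappears.
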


We now introduce the \emph{minimal radius} $r_*$ which quantifies the minimal scale on which the (extended) corrector $(\phi, \sigma)$ grows only sublinearly. For technical reasons, we do not only demand $r_* \geq r_e$ but even $r_* \geq M_0 r_e$ for a specific constant $M_0 \geq 1$ detailed below.

\begin{definition}[Minimal radius]
\label{defminradius}
In the situation of Definition \ref{defadmissible}, we define the \emph{minimal radius} as the random variable $r_* \geq M_0 r_e$ given in the form 
\[
r_* \coleq \inf \bigg\{ r \geq M_0 r_e \: \Big| \: \forall \, \rho > r : \max \bigg\{ \frac1{\rho} \Big( \fint_{B_\rho} \Big| \phi - \fint_{B_\rho} \phi \Big|^\frac{2p}{p-1} \Big)^\frac{p-1}{2p}, \frac1{\rho} \Big( \fint_{B_\rho} \Big| \sigma - \fint_{B_\rho} \sigma \Big|^\frac{2q}{q-1} \Big)^\frac{q-1}{2q} \bigg\} \leq \frac{1}{C_0} \bigg\}
\]
where $C_0$ is the constant from Lemma \ref{lemmac1alpha}, while $M_0 \geq 1$ is defined in \eqref{eqdefm0}.
\end{definition}

The following spectral gap estimate \eqref{eq:sgineq} is our main stochastic assumption on the underlying random environment. A very similar condition involving a coarsening partition $\{D\}$ of $\mathbb R^d$ was used in \cite{GNO20v2}. Alternatively, one can employ multiscale functional inequalities to describe the random ensemble (see Remark \ref{rem:multiscale}). A detailed exposition of these ideas is given by Duerinckx and Gloria in \cite{DG20}. We point out that the more elementary spectral gap condition \eqref{eq:sgineq} is sufficient for the present study, where we deduce stretched exponential moments for $r_*$ with a typically small exponent $\varepsilon$. Nevertheless, we remark that multiscale functional inequalities could provide a framework to obtain (with different techniques) stronger stretched exponential bounds on $r_*$ in the spirit of Gloria, Neukamm, and Otto \cite{GNO20}.

\begin{definition}[Spectral gap inequality]
\label{deffuncineq}
Let the hypotheses of Definition \ref{defadmissible} hold and assume that a partition $\{D\}$ of $\mathbb R^d$ and an exponent $\beta \in [0, 1)$ exist such that 
\begin{align}
\label{eq:beta}
\diam D \leq (\dist D + 1)^\beta \leq C(d) \inndiam D, 
\end{align}
where $\inndiam D \coleq 2 \sup \big\{ r \geq 0 \ \big| \ \exists \; x \in D : B_r(x) \subset D \big\}$ denotes the \emph{inner diameter} of $D \subset \mathbb R^d$.

We say that a random field $a$ satisfies the \emph{spectral gap} (or \emph{Poincar\'e}) \emph{inequality}, if there exists a constant $\kappa \in (0,1]$ such that 
\begin{align}
\label{eq:sgineq}
\Big\langle \big( X(a) - \langle X(a) \rangle \big)^2 \Big\rangle \leq \frac 1\kappa \bigg\langle \sum_D \Big( \int_D \Big| \frac{\partial X(a)}{\partial a} \Big|_a \Big)^2 \bigg\rangle
\end{align}
for all $\sigma(a)$-measurable random variables $X(a)$, where we recall that 
\begin{align*}
\int_D \Big| \frac{\partial X(a)}{\partial a} \Big|_a = \sup_{\| b\|_{L^\infty(D)} = 1} \int_D b : a^{\frac 12} \frac{\partial X(a)}{\partial a} a^{\frac 12} = \sup_{\| b\|_{L^\infty(D)} = 1} \limsup_{t\rightarrow 0} \frac{X \big(a + t a^{\frac 12} b a^{\frac 12} \big) - X(a)}{t}.
\end{align*}
\end{definition}

An upgraded version of the standard spectral gap estimate \eqref{eq:sgineq} to higher order moments will be provided in Lemma \ref{lemma:psgineq}, which will serve as a useful tool in various situations subsequently in this paper. 

\begin{remark}
\label{rem:multiscale}
Instead of \eqref{eq:sgineq}, one can also use a multiscale spectral gap inequality 
\begin{align}
\label{eq:multiscale}
\Big\langle \big(X(a) - \langle X(a) \rangle \big)^2 \Big\rangle \leq \bigg\langle \int_0^\infty \int_{\mathbb R^d} \Big( \int_{B_\ell(x)} \Big| \frac{\partial X(a)}{\partial a} \Big| \Big)^2 \, \d x \, \frac{\pi(\ell)}{(\ell+1)^d} \, \d\ell \bigg\rangle
\end{align}
or (multiscale) logarithmic Sobolev inequalities for quantifying the ergodicity of the ensemble $\langle \cdot \rangle$. The weight function $\pi : [0,\infty) \to [0,\infty)$ in \eqref{eq:multiscale} is generally assumed to be integrable. Integrable correlations $\mathrm{Cov} ( a(x); a(0) )$ can be modeled with weights decaying like $\pi(\ell) \sim (\ell+1)^{-1-\alpha}$ for $\alpha > d$. We refer to the work of Duerinckx and Gloria \cite{DG20} for further details. %In the case of a scalar Gaussian field $a(x)$ (which we apart from here do not consider in this article), $\alpha > d(1-\beta)$ (along with $\pi(\ell) \sim -\gamma'(\ell)$ and $| \mathrm{Cov} ( a(x); a(0) ) | \leq \gamma(|x|)$) is required in \cite{GNO20v3} to guarantee the existence of a partition $\{D\}$ such that \eqref{eq:beta} and a coarsened logarithmic Sobolev inequality hold. 
\end{remark}

The connection between the ellipticity radius $r_e$ and the stochastic integrability of the underlying coefficient field $a$ satisfying the spectral gap inequality \eqref{eq:sgineq} is clarified in the following lemma. We shall basically prove that stretched exponential moment bounds on averages of $|a|^p$ and $|a^{-1}|^q$ carry over to $r_e$.

\begin{lemma}[Stretched exponential moments for $r_e$]
	\label{lemma:re_moments}
	Assume that an ensemble of coefficient fields $a \in \Omega$ is given according to Definition \ref{defadmissible}, which satisfies the spectral gap estimate \eqref{eq:sgineq} along with $\beta \in [0, 1)$ subject to \eqref{eq:beta}. In case that $\int_{B_1} \lambda^{-q}$ and $\int_{B_1} \mu^p$ possess stretched exponential moments 
	\begin{align}
	\label{eq:strexpmom_mu} 
	\max \bigg\{ \Big\langle \exp \Big( \frac1C \Big( \int_{B_1} \lambda^{-q} \Big)^\alpha \Big) \Big\rangle, \, \Big\langle \exp \Big( \frac1C \Big( \int_{B_1} \mu^p \Big)^\alpha \Big) \Big\rangle \bigg\} < 2
	\end{align} 
	for some constants $\alpha > 0$ and $C > 0$, then the ellipticity radius $r_e$ from Definition \ref{def:ellipticity_radius} is subject to 
	\begin{align}
	\label{eq:strexpmom_re}
	\Big\langle \exp \Big( \frac1C r_e^{\frac{\alpha}{\alpha + 1} \frac{d}{2} (1-\beta)} \Big) \Big\rangle < 2
	\end{align}
	with the same parameter $\alpha > 0$ but a possibly different constant $C > 0$. 
	
%	If $\lambda$ and $\mu$ even allow for an almost sure bound 
%	\begin{align}
%	\label{eq:alsurebound_mu}
%	\max \bigg\{ \int_{B_1} \lambda^{-q}, \int_{B_1} \mu^p \bigg\} < C 
%	\end{align}
%	for some $C > 0$, then we obtain (with a generally different $C > 0$) 
%	\begin{align}
%	\label{eq:alsurebound_re}
%	\Big\langle \exp \Big( \frac1C r_e^{\frac{d}{2} (1-\beta)} \Big) \Big\rangle < 2.
%	\end{align}
	
	%where (with $\epsilon > 0$ arbitrarily small)
	%\[
	%\alpha_* \coleq 
	%\begin{cases}
	%\alpha, &\alpha < d, \\ 
	%d - \epsilon, &\alpha = d, \\ 
	%d, &\alpha > d.
	%\end{cases}
	%\]
\end{lemma}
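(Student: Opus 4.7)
The plan is to establish the tail bound $\langle\mathbb{1}_{\{r_e>r\}}\rangle\le C\exp(-r^{s}/C)$ with $s\coleq\alpha d(1-\beta)/(2(\alpha+1))$, from which \eqref{eq:strexpmom_re} follows by the layer-cake identity after enlarging $C$. The tail bound itself is proved by a dyadic decomposition of $\{r_e>r\}$ combined with a concentration estimate at each dyadic scale.

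\textbf{Dyadic reduction.} Introduce the functional
\[
F(\rho)\coleq\Bigl(\fint_{B_\rho}\mu^p\Bigr)^{1/p}+\Bigl(\fint_{B_\rho}\lambda^{-q}\Bigr)^{1/q}.
\]
For $\rho\in[2^k,2^{k+1}]$ and $f\ge 0$, $\fint_{B_\rho}f\le 2^d\fint_{B_{2^{k+1}}}f$, so $F(\rho)\le 2^{d/\min(p,q)}F(2^{k+1})$. Choose $c_0>1$ with $c_0\cdot 2^{d/\min(p,q)}\le 4$; the definition of $r_e$ then gives
\[
\{r_e>r\}\subset\bigcup_{k:\,2^{k+1}\ge r}\{F(2^{k+1})>c_0 K\}.
\]
Stationarity and Jensen imply $\langle F(\rho)\rangle\le K<c_0 K$, so each event in the union is a fluctuation of order $K$ above the mean.

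\textbf{Concentration on a fixed scale.} The key step is to show
\[
\bigl\langle\mathbb{1}_{\{F(\rho)>c_0 K\}}\bigr\rangle\le C\exp(-\rho^{s}/C).
\]
I would apply the moment version of the spectral gap (Lemma \ref{lemma:psgineq}) to $\fint_{B_\rho}\mu^p$ and $\fint_{B_\rho}\lambda^{-q}$, then pass to $F(\rho)$ via the local Lipschitz behaviour of $y\mapsto y^{1/p},y^{1/q}$ near the stationary values $\langle\mu^p\rangle,\langle\lambda^{-q}\rangle$. The required functional derivatives satisfy $|\partial\mu^p/\partial a|_a\lesssim\mu^p$ and $|\partial\lambda^{-q}/\partial a|_a\lesssim\lambda^{-q}$, which follow from $\mu=|a|$, $\lambda^{-1}=|a^{-1}|$, and the definition of $|\cdot|_a$. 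Covering each partition cell $D$ (of diameter $\lesssim\rho^\beta$ by \eqref{eq:beta}) by unit balls, using Cauchy--Schwarz, and invoking stationarity gives
\[
\Bigl\langle\sum_D\Bigl(\int_D\Bigl|\tfrac{\partial(\fint_{B_\rho}\mu^p)}{\partial a}\Bigr|_a\Bigr)^2\Bigr\rangle\lesssim\rho^{-d(1-\beta)}\,\Bigl\langle\Bigl(\int_{B_1}\mu^p\Bigr)^2\Bigr\rangle,
\]
and analogously for $\lambda^{-q}$. Iterating the $L^{2k}$-spectral gap and using \eqref{eq:strexpmom_mu} to control $\|\int_{B_1}\mu^p\|_{L^{2k}},\|\int_{B_1}\lambda^{-q}\|_{L^{2k}}\lesssim k^{1/\alpha}$ yields moment bounds for $F(\rho)-\langle F(\rho)\rangle$ that, after conversion to Orlicz norms and Markov, produce the claimed concentration estimate.

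\textbf{Conclusion and obstacle.} Summing over dyadic $k$ with $2^{k+1}\ge r$ gives the tail estimate, hence \eqref{eq:strexpmom_re}. The principal obstacle is the concentration step: identifying the correct Orlicz exponent requires carefully balancing the $\sqrt{k}$-factor from the Gaussian part of the $L^{2k}$-spectral gap against the $k^{1/\alpha}$-factor from the stretched exponential integrability \eqref{eq:strexpmom_mu}, together with the bookkeeping of the partition-based scaling $\rho^{-d(1-\beta)}$ via $\diam D\lesssim(\dist D+1)^\beta$ and a unit-ball covering of $B_\rho$. A secondary subtlety is the passage from concentration of the linear averages $\fint_{B_\rho}\mu^p$ and $\fint_{B_\rho}\lambda^{-q}$ to concentration of $F(\rho)$ via their $p$-th and $q$-th roots, where one must handle both the regime where the averages lie near their mean and the tail regime.
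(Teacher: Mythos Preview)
Your outline matches the paper's proof closely: apply the $P$th-power spectral gap (Lemma \ref{lemma:psgineq}) to $X=\fint_{B_R}\mu^p$ and $\fint_{B_R}\lambda^{-q}$, bound the functional derivative by $R^{-d}\int_{D\cap B_R}\mu^p$, cover each cell $D$ by unit balls and count cells to extract the factor $R^{-d(1-\beta)/2}$, invoke \eqref{eq:strexpmom_mu} for the $r^{1/\alpha}$ growth of $\|\int_{B_1}\mu^p\|_{L^{2r}}$, and then sum the resulting concentration bounds over a geometric sequence of scales. Two minor points: the paper sidesteps your ``root-taking'' subtlety by working directly with the event $\{\fint_{B_R}\mu^p>(2K)^p/b^d\}$ (for a base $b\in(1,2)$ chosen close to $1$) rather than with $F(\rho)$, and note that Lemma \ref{lemma:psgineq} contributes a factor $P$ (not $\sqrt{P}$) in the $L^{2P}$-norm, which combined with $P^{1/\alpha}$ yields $P^{(\alpha+1)/\alpha}$ and hence the stated Orlicz exponent $\alpha/(\alpha+1)$.
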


We are now in a position to show that the minimal radius $r_*$ introduced in Definition \ref{defminradius} possesses stretched exponential moments by adapting the line of arguments from Gloria, Neukamm, and Otto \cite{GNO20v2}. In contrast to the final version \cite{GNO20} of the aforementioned preprint, optimal stochastic integrability is not achieved in \cite{GNO20v2}. The main tool which allows the authors to improve the stochastic integrability of $r_*$ is a modified extended corrector $(\phi_T, \sigma_T)$ living on the length scale $\sqrt{T}$ and arising from a ``massive approximation''. As we are currently not able to adapt this approach to our situation, we resort to the more elementary approach in the preprint \cite{GNO20v2} where a bound on $r_*$ similar to the one in \eqref{eq:strexpmom_rast} is obtained.

In a nutshell, the problem arises from the ``massive term'' $\frac1T (\phi_T, \sigma_T)$ in the following system for $(\phi_T, \sigma_T)$ (cf.\ \cite[(37)--(39)]{GNO20}): 
\begin{align*}
\frac1T \phi_T - \nabla \cdot a ( \nabla \phi_T + e ) &= 0, \\ 
\frac1T \sigma_T - \Delta \sigma_T &= \nabla \times q_T, \qquad q_T \coleq a ( \nabla \phi_T + e ).
\end{align*}
The additional massive term gives rise to an exponential localization of $(\phi_T, \sigma_T)$ at the length scale $\sqrt{T}$. Related to that, the authors of \cite{GNO20} repeatedly work with convolutions of $\nabla \phi_T$ with a Gaussian distribution $G_T$ of variance $T$; see e.g.\ \cite[Proposition 3, Lemma 2]{GNO20} and in particular the proofs thereof. In our situation, where we want to formulate estimates also in terms of weighted $L^2_a$ and $L^2_{a^{-1}}$ norms, we have to face the following structural obstacle: Given a function $f \in L^2_a(\mathbb R^d)^d$, we generally end up with $f \ast G_T \notin L^2_a(\mathbb R^d)^d$. In other words, the space $L^2_a(\mathbb R^d)^d$ is not invariant under such a convolution.

\begin{theorem}[Stretched exponential moments for $r_*$]
\label{theoremmoments}
Suppose that the hypotheses of Definition \ref{defadmissible} on the ensemble of coefficient fields $a \in \Omega$ satisfying the spectral gap inequality \eqref{eq:sgineq} hold together with $\beta \in [0,1)$ subject to \eqref{eq:beta}. Moreover, assume that $\int_{B_1} \lambda^{-q}$ and $\int_{B_1} \mu^p$ allow for the stretched exponential moments in \eqref{eq:strexpmom_mu} with $\alpha \coleq \frac{\varepsilon}{1 - \varepsilon}$ where $\varepsilon \in (0,1)$ is the hole-filling exponent from Proposition \ref{propsensitivity}.

Then, the minimal radius $r_*$ as defined in Definition \ref{defminradius} fulfills 
\begin{align}
\label{eq:strexpmom_rast}
\Big\langle \exp \Big( \frac1C r_*^{\varepsilon \frac{d}{2} (1-\beta)} \Big) \Big\rangle < 2
\end{align}
for a sufficiently large constant $C > 0$.
\end{theorem}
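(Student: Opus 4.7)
\emph{Plan and setup.} The starting point is to decompose the event $\{r_* > r\}$ according to the two conditions in Definition \ref{defminradius}:
\begin{align*}
\{r_* > r\} \subseteq \{M_0 r_e > r\} \cup \bigcup_{\rho > r} \Big\{ \max\big(Y^\phi_\rho, Y^\sigma_\rho\big) > \tfrac{1}{C_0} \Big\},
\end{align*}
writing $Y^\phi_\rho \coleq \tfrac{1}{\rho}\big(\fint_{B_\rho} |\phi - \fint_{B_\rho} \phi|^{\frac{2p}{p-1}}\big)^{\frac{p-1}{2p}}$ and analogously for $Y^\sigma_\rho$. Lemma \ref{lemma:re_moments} handles the first event directly: plugging $\alpha = \varepsilon/(1-\varepsilon)$ into \eqref{eq:strexpmom_re} the exponent $\tfrac{\alpha}{\alpha+1}\tfrac{d}{2}(1-\beta)$ collapses to exactly $\varepsilon \tfrac{d}{2}(1-\beta)$, which matches the target in \eqref{eq:strexpmom_rast}. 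The core task is to produce the same tail for the second event.

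\emph{Concentration per scale.} For each $\rho \geq r \geq M_0 r_e$ my plan is to feed the functionals $\rho\, Y^\phi_\rho$ and $\rho\, Y^\sigma_\rho$ into the upgraded spectral gap inequality of Lemma \ref{lemma:psgineq} and bound their $2n$-th moments. Proposition \ref{propsensitivity} controls the pointwise sensitivity $|\partial Y^\phi_\rho / \partial a|_a(x)$ by a local average of $\mu$ and $\lambda^{-1}$ weighted by a kernel which, thanks to the hole-filling exponent $\varepsilon$, decays algebraically in $|x|/\rho$ with integrable rate. Integrating this kernel against the partition $\{D\}$ and invoking \eqref{eq:beta} to trade $|D| \lesssim (\dist D + 1)^{\beta d}$ converts the effective dimension into $d(1-\beta)$. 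Combined with the $\alpha$-stretched exponential integrability of $\mu^p, \lambda^{-q}$ from \eqref{eq:strexpmom_mu}, this produces a high-moment estimate of the schematic form
\begin{align*}
\big\langle (Y^\phi_\rho)^{2n} + (Y^\sigma_\rho)^{2n} \big\rangle^{1/(2n)} \leq C\, n^{\tau}\, \rho^{-\gamma}
\end{align*}
with exponents $\gamma, \tau > 0$ whose balance, dictated by the coupling $\alpha = \varepsilon/(1-\varepsilon)$, yields after optimization over $n$ the concentration bound
\begin{align*}
\Big\langle \mathbf 1_{\{\max(Y^\phi_\rho, Y^\sigma_\rho) > 1/C_0\}} \Big\rangle \leq C \exp \big(-c\, \rho^{\varepsilon d (1-\beta)/2} \big).
\end{align*}

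\emph{Summation and conclusion.} A standard Poincar\'e/Caccioppoli comparison across adjacent radii reduces the continuous supremum over $\rho > r$ to a supremum over a geometric sequence $\rho_k = 2^k r$, $k \geq 0$; the stretched exponential tails in the previous step then sum to $C \exp(-c'\, r^{\varepsilon d (1-\beta)/2})$. Combining this with the $r_e$-tail from Lemma \ref{lemma:re_moments} via the layer-cake identity applied to $\exp(\tfrac{1}{C} r_*^{\varepsilon d(1-\beta)/2})$ delivers \eqref{eq:strexpmom_rast}.

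\emph{Main obstacle.} The delicate step is the quantitative sensitivity bound in Proposition \ref{propsensitivity}. In the uniformly elliptic setting one exploits Green function decay at rate $|x|^{2-d}$ to localize $\partial \phi / \partial a$; here this tool is unavailable and has to be replaced by a hole-filling iteration built from the moment hypotheses on $\mu$ and $\lambda^{-1}$, which only delivers the suboptimal algebraic rate encoded by $\varepsilon$. Matching this $\varepsilon$-loss with the $\alpha$-stretched integrability \eqref{eq:strexpmom_mu} is precisely what forces the coupling $\alpha = \varepsilon/(1-\varepsilon)$ between the hypotheses and the conclusion. Carefully tracking the moment order $n$ through this combination, so that optimization lands on the exponent $\varepsilon d(1-\beta)/2$ rather than something smaller, is the most technical bookkeeping step of the proof; this is also where the restriction to symmetric $a$ becomes convenient, since $a^{1/2}$ appears throughout the sensitivity calculus in Notation \ref{not:a12} and Definition \ref{deffuncineq}.
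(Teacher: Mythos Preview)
Your proposal has two genuine gaps.

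First, you want to apply Proposition \ref{propsensitivity} directly to the sensitivities $\partial Y^\phi_\rho/\partial a$. But Proposition \ref{propsensitivity} is stated and proved only for \emph{linear} functionals $F\psi = \int g\cdot\psi$ of $\nabla\phi$ and $\nabla\sigma_{jk}$; the quantity $Y^\phi_\rho = \tfrac1\rho(\fint_{B_\rho}|\phi-\fint\phi|^{2p/(p-1)})^{(p-1)/2p}$ is nonlinear in $\phi$ and not of this form. The paper bridges this gap via Corollary \ref{corgrowth}, which bounds $Y^\phi_\rho$ and $Y^\sigma_\rho$ by local averages $\fint_{B_L(x)}\nabla\phi$ and $\fint_{B_L(x)}\nabla\sigma$ (plus a small error $(L/\rho)^{1-\theta}$ that is absorbed by the choice of $M_0$). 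These averages \emph{are} linear functionals of the gradient, and it is to them---not to $Y^\phi_\rho$---that the spectral gap and the sensitivity estimate are applied.

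Second, and more seriously, you overlook that the right-hand side of \eqref{eq:sensitivity} contains $r_*$ itself: the bound reads $C((r+r_*)^{1-\varepsilon(1-\beta)}/r)^d$. Hence the moment estimate you sketch for $\fint_{B_\rho}\nabla(\phi,\sigma)$ is not of the clean form $Cn^\tau\rho^{-\gamma}$ but rather involves $\langle r_*^{dP(1-\varepsilon(1-\beta))}\rangle^{1/P}$ on the right. This makes the argument circular: moments of $r_*$ are controlled in terms of (lower) moments of $r_*$. The paper resolves this by a \emph{buckling} argument (its Step 3): one postulates $\langle I(r_*^{\varepsilon d(1-\beta)/2}\ge s)\rangle\le e^{-s/\Lambda}$, feeds this back into the inequality \eqref{eq:minradbuckling_r}, and shows that the resulting self-map on $\Lambda$ has a bounded fixed point. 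Without this self-improvement step your concentration bound $\le C\exp(-c\rho^{\varepsilon d(1-\beta)/2})$ is not justified, and the summation over dyadic scales cannot close.
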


An essential part of the proof of Theorem \ref{theoremmoments} is concerned with the sensitivity analysis quantifying the dependence of $\nabla (\phi, \sigma)$ on the coefficient field $a$. At the end, we need to control averages of $\nabla (\phi, \sigma)$ on balls around the origin, but we shall give a slightly more general statement below. As above, the massive $(\phi_T, \sigma_T)$-regularization prevents us from proceeding as in \cite{GNO20}. However, we could prove an analogue of the statement in the intermediate version \cite{GNO20v3}, but as Theorem \ref{theoremmoments} is already posed in the language of \cite{GNO20v2}, it suffices to generalize the sensitivity result in \cite{GNO20v2} to our setting.

\begin{proposition}[Sensitivity estimate for average integrals]
\label{propsensitivity}
Let the assumptions of Definition \ref{defadmissible} on the ensemble of coefficient fields $a \in \Omega$ be in place, and let a partition $\{D\}$ of $\mathbb R^d$ and $\beta \in [0,1)$ be given according to \eqref{eq:beta}. Consider the linear functional 
\[
F\psi = \int g \cdot \psi
\]
acting on vector fields $\psi: \mathbb R^d \rightarrow \mathbb R^d$, where $g: \mathbb R^d \rightarrow \mathbb R^d$ is supported in $B_r$ for some radius $r \geq r_e$. 

Then, there exist a hole-filling exponent $\varepsilon = \varepsilon(d,K) \in (0,1)$ and a constant $C = C(d,K) > 1$ such that for any $g$ as above satisfying 
\begin{align}
\label{eq:averfunc}
\max \bigg\{ \Big( \fint_{B_r} |g|^\frac{2p}{p-1} \Big)^\frac{p-1}{2p}, \, \Big( \fint_{B_r} |g|^\frac{2q}{q-1} \Big)^\frac{q-1}{2q} \bigg\} \lesssim r^{-d},
\end{align}
the following bound on the functional derivative of $F$ holds: 
\begin{align}
\label{eq:sensitivity}
\sum_D \Big( \int_D \Big| \frac{\partial F \nabla \phi}{\partial a} \Big|_a \Big)^2 + \sum_D \Big( \int_D \Big| \frac{\partial F \nabla \sigma_{jk}}{\partial a} \Big|_a \Big)^2 \leq C \Big( \frac{(r+r_*)^{1-\varepsilon(1-\beta)}}{r} \Big)^d.
\end{align}
%\begin{equation*}
%\ell^{-d} \int  |\partial^\textrm{fct}_{x,\ell} F \nabla \psi|^2 \leq C \ell^{\frac{d(P-1)}{P(P+1)}+d} \Big( 1 + \frac{\ell}{r_* + r} \Big)^\frac{2\alpha}{P+1} r^{-d(2-\frac{1}{P})} \bigg(\int \omega^\frac{P+1}{P-1} \bigg)^\frac{P-1}{P+1}
%\end{equation*}
%where
%\[
%\omega(x) \coleq \big( \ell^{-d} r_*(x)^{(1 - \varepsilon)d} r_e(x)^{\varepsilon d} + 1 \big) \Big(\frac{r + r_*}{|x| + r + r_*}\Big)^{\frac{2\alpha}{P+1}}.
%\]
\end{proposition}

We now employ the above results on the existence of stretched exponential moments for the minimal radius $r_*$ (cf.\ Theorem \ref{theoremmoments}) and the sensitivity estimate for (extended) corrector gradients (cf.\ Proposition \ref{propsensitivity}) to derive quantitative estimates on the decay of the corrector gradient $\nabla (\phi, \sigma)$ and the growth of the corrector itself. Due to the relatively weak ($\varepsilon$-dependent) stretched exponential moments available for $r_*$ (compared to \cite{GNO20} and its preprint \cite{GNO20v3}), the subsequent results also involve a dependence on $\varepsilon$.

\begin{theorem}[Decay of the corrector gradient and growth of the corrector]
\label{theoremcorrectors}
Assume that the ensemble of coefficient fields $a \in \Omega$ fulfills the assumptions of Definition \ref{defadmissible} and satisfies the spectral gap estimate \eqref{eq:sgineq} along with $\beta \in [0, 1)$ subject to \eqref{eq:beta}. Let $\varepsilon \in (0, 1)$ denote the constant from Proposition \ref{propsensitivity}. 

Then, there exists a stationary random field $\mathcal C(x)$ with stretched exponential moments 
\begin{align}
\label{eq:randomfield_corrector}
\Big\langle \exp \Big( \frac1C \mathcal C^{\varepsilon (1 - \beta)} \Big) \Big\rangle < 2
\end{align}
for a sufficiently large constant $C > 0$ such that the following assertions hold: 
\begin{enumerate}
\item If $m : \mathbb R^d \rightarrow \mathbb R^d$ is bounded and supported in $B_r$, $r \geq 1$, $\fint_{B_r} |m|^2 = 1$, and assumption \eqref{eq:strexpmom_mu} holds true with $\alpha \coleq \frac{\varepsilon}{1 - \varepsilon}$, then, for all $x \in \mathbb R^d$, 
\[
\bigg| \fint_{B_r} \nabla (\phi, \sigma) (x+y) \cdot m(y) \, dy \, \bigg| \leq \mathcal C(x) r^{-\frac \varepsilon 2 d (1 - \beta)}.
\]
\item %Assume now that $\int_{B_1} \lambda^{-q}$ and $\int_{B_1} \mu^p$ are almost surely bounded as stated in \eqref{eq:alsurebound_mu} 
If $\varepsilon \in \big( 0, \frac{\alpha}{\alpha + 1} - \frac{1}{\min\{p,q\}} \big]$ and \eqref{eq:strexpmom_mu} holds for some $\alpha > \frac{1}{\min\{p,q\} - 1}$, then the correctors $(\phi, \sigma)$ fulfill 
\[
\Big( \fint_{B_1(x)} | \phi |^\frac{2p}{p-1} \Big)^\frac{p-1}{2p} + \Big( \fint_{B_1(x)} | \sigma |^\frac{2q}{q-1} \Big)^\frac{q-1}{2q} \lesssim \Big| \fint_{B_1} (\phi, \sigma) \Big| + \mathcal C(x) \pi(|x|)
\]
together with  
\begin{align}
\label{eq:defpi}
\pi(r) \coleq 
\begin{cases}
1, &0 \leq \beta < 1 - \frac {2}{\varepsilon d}, \\ 
\log(2 + r), &\beta = 1 - \frac {2}{\varepsilon d}, \\ 
r^{\frac{\varepsilon d}{2} (\frac{2}{\varepsilon d} - 1 + \beta)}, &\beta > 1 - \frac {2}{\varepsilon d}. 
\end{cases}
\end{align}
\end{enumerate}
\end{theorem}

Our last result gives a quantitative estimate for a two-scale expansion. It is mainly a consequence of Theorem \ref{theoremcorrectors} on the growth of the corrector and the stochastic integrability of the random field $\mathcal C$ in \eqref{eq:randomfield_corrector}. We formulate the statement in the same spirit as in \cite{GNO21}; in particular, we employ the same averaging procedure over small balls for reasons of generality (even though this might not be necessary in many cases). But in contrast to \cite{GNO21}, we again encounter the small parameter $\varepsilon$ (coming from Theorem \ref{theoremcorrectors}), and we also get an additional term on the right-hand side of \eqref{eq:twoscaleerror} (which can be (formally) absorbed in the other term on the right-hand side in the limit $q \rightarrow \infty$).

\begin{corollary}[Quantitative two-scale expansion]
\label{cor:twoscale}
Suppose that the ensemble of coefficient fields $a \in \Omega$ meets the requirements of Definition \ref{defadmissible} and fulfills the spectral gap estimate \eqref{eq:sgineq}. Besides, let \eqref{eq:beta} hold with $\beta \in [0, 1)$, suppose that assumption \eqref{eq:strexpmom_mu} with $\alpha > \frac{1}{\min\{p,q\} - 1}$ is in place, and let the hole-filling exponent from Proposition \ref{propsensitivity} be restricted to $\varepsilon \in \big(0, \frac{\alpha}{\alpha + 1} - \frac{1}{\min\{p,q\}} \big]$. 
For $R \geq r_e$ and $\delta > 0$, let $g \in W^{1,\frac{2q}{q-1}}(\mathbb R^d)$ be supported in $B_R$, and let $u_\delta$ and $u_\mathrm{hom}$ denote the solutions to 
\[
-\nabla \cdot a \big( \tfrac{\cdot}{\delta} \big) \nabla u_\delta = \nabla \cdot g, \qquad -\nabla \cdot a_\mathrm{hom} \nabla u_\mathrm{hom} = \nabla \cdot g, 
\]
while the error $z_\delta$ in the two-scale expansion and the small-scale average $u_{\mathrm{hom},\delta}$ are defined by 
\[
z_\delta \coleq u_\delta - \big( u_{\mathrm{hom},\delta} + \delta \phi_i \big( \tfrac{\cdot}{\delta} \big) \partial_i u_{\mathrm{hom},\delta} \big), \qquad u_{\mathrm{hom},\delta}(x) \coleq \fint_{B_\delta(x)} u_\mathrm{hom}.
\]
We then have 
\begin{align}
\label{eq:twoscaleerror}
\Big( \int \big| \nabla z_\delta \big|_a^2 \Big)^\frac12 \lesssim \delta^{1+\frac{d}{2q}} \Big( \int |\nabla g|^\frac{2q}{q-1} \Big)^\frac{q-1}{2q} + \mathcal C_{\delta, g} \delta \pi(\delta^{-1}) \Big( \int \pi(|x|)^2 |\nabla g|^2 \Big)^\frac12
\end{align}
where $\pi(r)$ is defined in \eqref{eq:defpi} and where the random field $\mathcal C_{\delta, g}$ satisfies 
\begin{align}
\label{eq:quant-two-scale}
\bigg\langle \exp \bigg( \frac1C \mathcal C_{\delta, g}^{\big( 1 + \frac{\alpha + 1}{\alpha} \frac{\varepsilon}{\min\{p,q\}} \big)^{-1} \varepsilon (1 - \beta)} \bigg) \bigg\rangle < 2
\end{align}
for a sufficiently large constant $C > 0$ independent of $\delta$, $g$, $p$, and $q$. 
\end{corollary}

We conclude this section with a remark on the relations between the constants introduced above. In particular, we show that all conditions imposed on the constants are indeed feasible. 

\begin{remark}
From the definition of the hole-filling exponent $\varepsilon \in (0,1)$ in terms of the constant $E = C_d C_\mathrm{Sob}^2 C_\mathrm{Poi}^2 K^2$ in Step 1 of the proof of Proposition \ref{propsensitivity} (cf.\ \eqref{eq:defholefill}) we see that 
\begin{align}
\label{eq:estholefill}
1 + \frac1E = 2^{\varepsilon d} \leq 1 + \frac{2^d - 1}{d} \varepsilon d \leq 1 + 2^d \varepsilon \quad \Longrightarrow \quad \varepsilon \geq \frac{1}{C_d C_\mathrm{Sob}^2 C_\mathrm{Poi}^2 K^2}
\end{align}
for an adapted constant $C_d \geq 1$. Thus, \eqref{eq:estholefill} provides an initial lower bound for $\varepsilon \in (0, 1)$ only in terms of the constant $K \geq 1$ from \eqref{eq:defk} and domain-dependent constants $C_\mathrm{Sob} \geq 1$ and $C_\mathrm{Poi} \geq 1$. 

The requirements on $\alpha > 0$ and $\varepsilon \in (0, 1)$ in Theorem \ref{theoremcorrectors} and Corollary \ref{cor:twoscale} should be seen as compatibility conditions for the orders of stochastic integrability of the minimal radius $r_\ast$ and the ellipticity radius $r_e$. On the one hand, $\alpha > \frac{1}{\min\{p,q\} - 1}$ is needed to ensure the positivity of $\frac{\alpha}{\alpha + 1} - \frac{1}{\min\{p,q\}}$. On the other hand, $\varepsilon \in \big( 0, \frac{\alpha}{\alpha + 1} - \frac{1}{\min\{p,q\}} \big]$ guarantees that certain powers of $r_\ast$ and $r_e$ are stochastically integrable with the \emph{same order} at the end of the proof of Theorem \ref{theoremcorrectors}. Here, we use the fact that $\varepsilon \in (0, 1)$ can indeed be chosen sufficiently small since \eqref{eqholefill} remains true for smaller $\varepsilon$. Finally, the order of stochastic integrability of the random fields $\mathcal C_{\delta, g}$ in \eqref{eq:quant-two-scale} lies in the interval $(0, \varepsilon (1 - \beta))$. 
\end{remark}

\section{Large-scale $C^{1,\alpha}$ regularity quantified by the minimal radius $r_\ast$}
\subsection{Proof of Lemma \ref{lemmac1alpha}: A mean-value property for $a$-harmonic functions} 
\begin{proof}[Proof of Lemma \ref{lemmac1alpha}]
We divide the proof into two steps. First, we derive a non-degeneracy property for $\xi + \nabla \phi_\xi$ with $\xi \in \mathbb R^d$, while the desired mean-value property is proven as a consequence in the second step.  

\emph{Step~1. Excess decay and non-degeneracy.} Under the hypotheses of the lemma, we may apply \cite[Theorem 2]{BFO18} to establish \eqref{eqexcessdecay}. 
Note that we subsequently use \eqref{eqexcessdecay} with the choice $\alpha \coleq \frac12$. Following \cite{GNO20}, we shall first prove a non-degeneracy condition for the correctors $\phi_\xi$ in the sense 
\begin{align}
\label{eqnondegeneracy}
c |\xi|^2 \leq \fint_{B_r} \big|\xi + \nabla \phi_\xi \big|_a^2 \leq C |\xi|^2
\end{align}
for all $r \geq r_*$ and $\xi \in \mathbb R^d$ where $0 < c < C$ are independent of $r$ and $\xi$. For the lower bound, we first recall the elementary bound 
\[
\Big( \int_{B_r} |\xi + \nabla \phi_\xi|^\frac{2q}{q+1} \Big)^\frac{q+1}{q} \leq \Big( \int_{B_r} \lambda^{-q} \Big)^\frac1q \int_{B_r} |\xi + \nabla \phi_\xi|_a^2.
\]
Together with Poincar\'e's inequality, we derive 
\[
\Big( \fint_{B_r} |\xi + \nabla \phi_\xi|_a^2 \Big)^\frac12 \gtrsim \Big( \fint_{B_r} |\xi + \nabla \phi_\xi|^\frac{2q}{q+1} \Big)^\frac{q+1}{2q} \gtrsim \frac1r \Big( \fint_{B_r} |\xi \cdot x + \phi_\xi - \fint_{B_r} \phi_\xi|^\frac{2q}{q+1} \Big)^\frac{q+1}{2q}.
\]
The triangle inequality, Jensen's inequality, and the sublinear growth of the corrector now yield 
\begin{align*}
\Big( \fint_{B_r} |\xi + \nabla \phi_\xi|_a^2 \Big)^\frac12 &\gtrsim \frac1r \Big( \fint_{B_r} |\xi \cdot x|^\frac{2q}{q+1} \Big)^\frac{q+1}{2q} - \frac1r \Big( \fint_{B_r} \Big|\phi_\xi - \fint_{B_r} \phi_\xi \Big|^\frac{2q}{q+1} \Big)^\frac{q+1}{2q} \\ 
&\geq \frac{|\xi|}{r} \Big( \fint_{B_r} |x|^\frac{2q}{q+1} \Big)^\frac{q+1}{2q} - \frac{|\xi|}{r} \Big( \fint_{B_r} \Big|\phi - \fint_{B_r} \phi \Big|^\frac{2p}{p-1} \Big)^\frac{p-1}{2p} \gtrsim |\xi| - \frac{1}{C_0} |\xi|
\end{align*}
taking the scaling of $\phi_\xi$ and the notation $\phi = (\phi_i)_i$ into account. Choosing the constant $C_0 > 0$ sufficiently large, one arrives at the desired lower bound in \eqref{eqnondegeneracy}. Similarly, the upper bound is a consequence of the Caccioppoli estimate (carried out e.g.\ in \cite[Lemma 3]{BFO18})
\[
\int_{B_r} |\xi + \nabla \phi_\xi|_a^2 \leq \frac{4}{r^2} \Big( \int_{B_{2r}} \mu^p \Big)^\frac1p \Big( \int_{B_{2r}} \Big|\xi \cdot x + \phi_\xi - \fint_{B_{2r}} \phi_\xi \Big|^\frac{2p}{p-1} \Big)^\frac{p-1}{p}. % \Big( \fint_{B_{2r}} |a|^p \Big)^\frac1p.
\]
By the same reasoning as above, we obtain 
\[
\Big( \fint_{B_r} |\xi + \nabla \phi_\xi|_a^2 \Big)^\frac12 \lesssim \frac{|\xi|}{r} \Big( \fint_{B_{2r}} |x|^\frac{2p}{p-1} \Big)^\frac{p-1}{2p} + \frac{|\xi|}{r} \Big( \fint_{B_{2r}} \Big|\phi - \fint_{B_{2r}} \phi \Big|^\frac{2p}{p-1} \Big)^\frac{p-1}{2p} \lesssim |\xi| + \frac{1}{C_0} |\xi|.
\]
The claimed bound \eqref{eqnondegeneracy} now follows. 

\emph{Step~2. Mean-value property.} The ideas of \cite{GNO20} also apply to our situation, but we present the main steps for completeness. The lower bound in \eqref{eqnondegeneracy} ensures for any $\rho \in [r_*, R]$ the existence of a unique $\xi_\rho \in \mathbb R^d$ such that  
\begin{align}
\label{eqxirhodefinition}
\Exc(\rho) = \fint_{B_\rho} \big| \nabla u - (\xi_\rho + \nabla \phi_{\xi_\rho}) \big|_a^2.
\end{align}
For radii $\rho, \rho' \in [r_*, R]$ satisfying $0 < \rho' - \rho \leq \rho$, we deduce by virtue of \eqref{eqnondegeneracy}, the linearity of $\xi \mapsto \phi_\xi$, and the triangle inequality 
\[
|\xi_\rho - \xi_{\rho'}|^2 \lesssim \fint_{B_\rho} \big|\xi_\rho - \xi_{\rho'} + \nabla \phi_{\xi_\rho - \xi_{\rho'}} \big|_a^2 \lesssim \fint_{B_\rho} \big|\nabla u - (\xi_\rho + \nabla \phi_{\xi_\rho})\big|_a^2 + \fint_{B_\rho} \big|\nabla u - (\xi_{\rho'} + \nabla \phi_{\xi_{\rho'}})\big|_a^2.
\]
Due to the minimality property \eqref{eqxirhodefinition} of $\xi_\rho$ and $\rho < \rho' \leq 2\rho$, this entails  
\begin{align}
\label{eqxirhosmall}
|\xi_\rho - \xi_{\rho'}|^2 \lesssim \fint_{B_\rho} \big|\nabla u - (\xi_{\rho'} + \nabla \phi_{\xi_{\rho'}})\big|_a^2 \lesssim \Exc(\rho').
\end{align}
For arbitrary $R \geq r \geq r_*$, we let $N \in \mathbb N$ be the integer such that $2^{-(N+1)} R < r \leq 2^{-N} R$, which allows us to use \eqref{eqxirhosmall} and \eqref{eqexcessdecay} (with $\alpha = \frac12$) to estimate 
\begin{align}
\label{eqxirholarge}
|\xi_r - \xi_R|^2 \leq \bigg( \sum_{n = 0}^N |\xi_{2^{-(n+1)} R} - \xi_{2^{-n} R}| \bigg)^2 \lesssim \bigg( \sum_{n = 0}^N 2^{-\frac{n}{2}} \sqrt{\Exc(R)} \bigg)^2 \lesssim \Exc(R).
\end{align}
By means of \eqref{eqxirhodefinition}, \eqref{eqnondegeneracy}, and \eqref{eqxirholarge}, we thus get 
\[
\fint_{B_r} |\nabla u|_a^2 \lesssim \Exc(r) + |\xi_r|^2 \leq \Exc(r) + \Exc(R) + |\xi_R|^2.
\]
Moreover, \eqref{eqexcessdecay} and the definition of the excess ensure $\Exc(r) \lesssim \Exc(R) \leq \fint_{B_R} |\nabla u|_a^2$, while 
\[
|\xi_R|^2 \lesssim \fint_{B_R} \big|\xi_R + \nabla \phi_{\xi_R} \big|_a^2 \lesssim \Exc(R) + \fint_{B_R} |\nabla u|_a^2
\]
is a result of \eqref{eqnondegeneracy} and \eqref{eqxirhodefinition}. This concludes the argument. 
\end{proof}

\subsection{Proof of Lemma \ref{lemma:re_moments}: Stretched exponential moments for $r_e$} 

\begin{lemma}[$P$th power spectral gap estimate]
	\label{lemma:psgineq}
	Let the ensemble of coefficient fields $a \in \Omega$ satisfy the assumptions in Definition \ref{defadmissible} and the spectral gap estimate \eqref{eq:sgineq} with an arbitrary partition $\{D\}$ of $\mathbb R^d$. Then, there exists a constant $C > 0$ such that 
	\begin{align}
	\label{eq:pspectralgap}
	\big\langle (\zeta - \langle \zeta \rangle)^{2P} \big\rangle^\frac1P \leq \frac{CP^2}{\kappa} \bigg\langle \Big( \sum_D \Big( \int_D \Big| \frac{\partial \zeta}{\partial a} \Big|_a \Big)^2 \Big)^P \bigg\rangle^\frac1P 
	\end{align} 
	for any random variable $\zeta$ and all $P \in \mathbb N$, $P \geq 2$.
\end{lemma}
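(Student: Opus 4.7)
The strategy is to upgrade the $L^{2}$ spectral gap \eqref{eq:sgineq} to the higher $L^{2P}$ moment via a chain-rule/Hölder/Young argument, closed by induction on $P$. Set $\bar\zeta \coleq \zeta - \langle\zeta\rangle$, so that $\partial\bar\zeta/\partial a = \partial\zeta/\partial a$ and the left-hand side of \eqref{eq:pspectralgap} is $\langle\bar\zeta^{2P}\rangle^{1/P}$. Abbreviate
\[
Y \coleq \sum_D \Big(\int_D \Big|\frac{\partial\zeta}{\partial a}\Big|_a\Big)^2, \qquad V_P \coleq \langle \bar\zeta^{2P}\rangle^{1/P},
\]
so the target becomes $V_P \leq (CP^{2}/\kappa)\,\langle Y^P\rangle^{1/P}$. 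The base case $P = 1$ is exactly \eqref{eq:sgineq} applied to $\zeta$.

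For the inductive step I apply \eqref{eq:sgineq} to the random variable $\bar\zeta^{P}$. By the chain rule, its sensitivity derivative is $\partial(\bar\zeta^{P})/\partial a = P\,\bar\zeta^{P-1}\,\partial\zeta/\partial a$. The scalar $\bar\zeta^{P-1}$ does not depend on the $L^{\infty}(D)$ test field $b$ inside the sup-ess in Definition \ref{deffuncineq}, hence factors out, and after squaring one obtains $P^{2}\bar\zeta^{2(P-1)}\,Y$ on the right-hand side of \eqref{eq:sgineq}. This yields
\[
\langle\bar\zeta^{2P}\rangle - \langle\bar\zeta^{P}\rangle^{2} \;\leq\; \frac{P^{2}}{\kappa}\,\langle\bar\zeta^{2(P-1)}\, Y\rangle.
\]
Hölder with conjugate exponents $\bigl(P/(P-1),P\bigr)$ converts the right-hand side into $(P^{2}/\kappa)\,V_{P}^{P-1}\,\langle Y^{P}\rangle^{1/P}$, while Cauchy--Schwarz gives $\langle\bar\zeta^{P}\rangle^{2} = \langle\bar\zeta\cdot\bar\zeta^{P-1}\rangle^{2} \leq V_{1}\,V_{P-1}^{P-1}$. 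Collecting,
\[
V_{P}^{P} \;\leq\; V_{1}\,V_{P-1}^{P-1} + \frac{P^{2}}{\kappa}\,V_{P}^{P-1}\,\langle Y^{P}\rangle^{1/P}.
\]

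To close, I use Young's inequality with exponents $\bigl(P,P/(P-1)\bigr)$ to absorb $\tfrac12 V_{P}^{P}$ from the last term onto the left, at the price of a contribution of order $(P^{2}/\kappa)^{P}\,\langle Y^{P}\rangle$. The cross term $V_{1}\,V_{P-1}^{P-1}$ is treated by the inductive hypotheses $V_{1}\leq (C/\kappa)\langle Y\rangle$ and $V_{P-1}^{P-1}\leq \bigl(C(P-1)^{2}/\kappa\bigr)^{P-1}\langle Y^{P-1}\rangle$ combined with the Hölder bound $\langle Y\rangle\langle Y^{P-1}\rangle\leq \langle Y^{P}\rangle$. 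Extracting a $P$-th root produces the claim with a constant of the asserted order $CP^{2}/\kappa$.

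The only delicate point I foresee is the bookkeeping of constants across the induction. Young inflates the multiplicative constant by a factor $\lesssim P^{2P}$, which is already built into the target growth $(CP^{2}/\kappa)^{P}$, while the cross term contributes at worst $\bigl(C(P-1)^{2}/\kappa\bigr)^{P-1}\cdot(C/\kappa)$, which is bounded by $(CP^{2}/\kappa)^{P}$ after enlarging the universal constant. Every other step is a routine Hölder or Cauchy--Schwarz application. Should one worry about the sign of $\bar\zeta^{P-1}$ for odd $P$, one can instead apply \eqref{eq:sgineq} to $\bar\zeta\,|\bar\zeta|^{P-1}$, whose derivative is $P\,|\bar\zeta|^{P-1}\,\partial\zeta/\partial a$; the computation is identical and the sign issue never enters because it is absorbed in the square.
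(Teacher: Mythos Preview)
Your proof is correct and shares its overall skeleton with the paper's argument: apply the spectral gap to $\bar\zeta^{P}$, use the chain rule to extract $P^{2}\bar\zeta^{2(P-1)}Y$, then H\"older and Young to absorb the top-order term $V_P^P$. The only real divergence is in how you dispose of the cross term $\langle\bar\zeta^{P}\rangle^{2}$. You split it by Cauchy--Schwarz into $V_1\,V_{P-1}^{P-1}$ and close by induction on $P$, which forces you to track constants through the recursion (your final paragraph does this honestly, and the bookkeeping indeed closes with a universal $C$). The paper instead interpolates directly: writing $\zeta^{P}=\zeta^{P\frac{P-2}{P-1}}\zeta^{\frac{P}{P-1}}$ and applying H\"older with exponents $2\tfrac{P-1}{P-2}$ and $2\tfrac{P-1}{P}$ yields $\langle\zeta^{P}\rangle^{2}\le\langle\zeta^{2P}\rangle^{\frac{P-2}{P-1}}\langle\zeta^{2}\rangle^{\frac{P}{P-1}}$, after which a second Young absorbs the $\langle\zeta^{2P}\rangle$ factor and the remaining $\langle\zeta^{2}\rangle^{P}$ is handled by a single application of the base spectral gap plus Jensen. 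This avoids induction entirely and makes the constant transparent in one shot, whereas your route is equally valid but slightly heavier on bookkeeping.
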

\begin{proof}
	The arguments are basically the same as in \cite{GNO20v2} but adapted to our degenerate setting. Applying the spectral gap estimate \eqref{eq:sgineq} to $\zeta^P$, we first derive 
	\[
	\langle \zeta^{2P} \rangle \leq \langle \zeta^{P} \rangle^2 + \frac1\kappa \bigg\langle \sum_D \Big( \int_D \Big| \frac{\partial \zeta^P}{\partial a} \Big|_a \Big)^2 \bigg\rangle.
	\]
	Elementary calculus guarantees that 
	\[
	\sum_D \Big( \int_D \Big| \frac{\partial \zeta^P}{\partial a} \Big|_a \Big)^2 = P^2 \zeta^{2(P-1)} \sum_D \Big( \int_D \Big| \frac{\partial \zeta}{\partial a} \Big|_a \Big)^2,
	\]
	while H\"older's inequality on the level of the probability measure $\langle \cdot \rangle$ yields 
	\[
	\Big\langle \sum_D \Big( \int_D \Big| \frac{\partial \zeta^P}{\partial a} \Big|_a \Big)^2 \Big\rangle \leq P^2 \langle \zeta^{2P} \rangle^{1-\frac1P} \bigg\langle \Big( \sum_D \Big( \int_D \Big| \frac{\partial \zeta}{\partial a} \Big|_a \Big)^2 \Big)^P \bigg\rangle^\frac1P.
	\]
	Young's inequality now allows to get rid of $\langle \zeta^{2P} \rangle$ on the right-hand side and to derive 
	\begin{align}
	\label{eq:pspectralgap_zetap}
	\langle \zeta^{2P} \rangle \leq C \langle \zeta^{P} \rangle^2 + \Big( \frac{CP^2}{\kappa} \Big)^P \bigg\langle \Big( \sum_D \Big( \int_D \Big| \frac{\partial \zeta}{\partial a} \Big|_a \Big)^2 \Big)^P \bigg\rangle
	\end{align}
	with some constant $C>0$. We now argue how to replace $\langle \zeta^P \rangle^2$ by $\langle \zeta^2 \rangle^P$ on the right-hand side. To this end, one writes $\zeta^P = \zeta^{P \frac{P-2}{P-1}} \zeta^{P \frac{1}{P-1}}$ and applies H\"older's inequality with exponents $2\frac{P-1}{P-2}$ and $2\frac{P-1}{P}$ followed by Young's inequality leading to 
	\[
	\langle \zeta^P \rangle^2 \leq \langle \zeta^{2P} \rangle^\frac{P-2}{P-1} \langle \zeta^2 \rangle^\frac{P}{P-1} \leq \frac1C \langle \zeta^{2P} \rangle + C^{P-2} \langle \zeta^2 \rangle^P
	\]
	with another constant $C>0$. %The first term on the right-hand side is absorbed on the left-hand side of \eqref{eq:pspectralgap_zetap}, while we 
	Using again the original spectral gap inequality and noting that it suffices to prove \eqref{eq:pspectralgap} for the case $\langle \zeta \rangle = 0$, we further obtain 
	\[
	\langle \zeta^2 \rangle^P \leq %CP \langle \zeta \rangle^{2P} + 
	\frac{1}{\kappa^P} \Big\langle \sum_D \Big( \int_D \Big| \frac{\partial \zeta}{\partial a} \Big|_a \Big)^2 \Big\rangle^P.
	\]
	The proof is finished taking Jensen's inequality $\langle ( \cdot ) \rangle^P \leq \langle ( \cdot )^P \rangle$ into account.
\end{proof}

\begin{proof}[Proof of Lemma \ref{lemma:re_moments}]
	We divide the proof into two steps. 
	
	\medskip
	\emph{Step~1. Exponential concentration for $\fint_{B_R} \mu^p$ and $\fint_{B_R} \lambda^{-q}$.}
	We start by recalling the upgraded spectral gap estimate from \eqref{eq:pspectralgap} and by applying it to $X(a) \coleq \fint_{B_R} \mu^p$ for some arbitrary $R \geq 1$. The same arguments are also applicable to $\fint_{B_R} \lambda^{-q}$. This yields % In the general case $\langle X(a) \rangle \neq 0$, the last estimate turns into 
	\begin{align}
	\label{eq:2rmoment}
	\Big\langle \big( X(a) - \langle X(a) \rangle \big)^{2r} \Big\rangle^\frac{1}{2r} \leq C r \bigg\langle \Big( \sum_D \Big( \int_D \Big| \frac{\partial X(a)}{\partial a} \Big|_a \Big)^2 \Big)^r \bigg\rangle^\frac{1}{2r}
	\end{align}
	for all $r \in \mathbb N$, $r \geq 2$. In a similar setting, exponential concentration and stretched exponential moments were shown in \cite[Proposition 1.10]{DG20} for arbitrary random variables $X(a)$ by assuming a deterministic bound of the form 
	\begin{align*}
	%\label{eq:sgbound}
	\sum_D \Big( \int_D \Big| \frac{\partial X(a)}{\partial a} \Big|_a \Big)^2 \leq \ol C
	\end{align*}
	(albeit employing a multiscale spectral gap inequality). Such a deterministic bound cannot be expected in our situation, instead we shall prove that 
	\begin{align}
	\label{eq:sgbound}
	\bigg\langle \Big( \sum_D \Big( \int_D \Big| \frac{\partial X(a)}{\partial a} \Big|_a \Big)^2 \Big)^r \bigg\rangle^\frac{1}{2r} \lesssim r^\frac{1}{\alpha} R^{-\frac{d}{2} (1 - \beta)}
	\end{align}
	holds true where $\lesssim$ means $\leq$ up to the prescribed parameters $d$, $p$, $q$, and $K$. To this end, we first calculate 
	\begin{align*}
	\sum_D \Big( \int_D \Big| \frac{\partial X(a)}{\partial a} \Big|_a \Big)^2 &= \sum_D \bigg( \sup_{\| b\|_{L^\infty(D)} = 1} \int_D b : a^{\frac 12} \frac{\partial X(a)}{\partial a} a^{\frac 12} \bigg)^2 \\ 
	&= \sum_D \bigg( \sup_{\| b\|_{L^\infty(D)} = 1} \limsup_{t\rightarrow 0} \frac{X \big(a + t a^{\frac 12} b a^{\frac 12} \big) - X(a)}{t} \bigg)^2.
	\end{align*}
	Next, we apply the elementary mean value theorem with some $\theta, \vartheta \in [0,1]$ to obtain 
	\begin{align*}
	&\frac{X \big(a + t a^{\frac 12} b a^{\frac 12} \big) - X(a)}{t} = \frac1t \fint_{B_R} \bigg( \big| a + t a^{\frac 12} b a^{\frac 12} \big|^p - | a |^p \bigg) \\ 
	&\qquad = p \fint_{B_R} \big( (1-\theta) |a| + \theta \big| a + t a^\frac12 b a^\frac12 \big| \big)^{p-1} \big| a + \vartheta t a^{\frac 12} b a^{\frac 12} \big|' : a^{\frac 12} b a^{\frac 12} \\ 
	&\qquad \lesssim \fint_{B_R} |a|^{p-1} \big( 1-\theta + \theta | I + t b | \big)^{p-1} \big| a^{\frac 12} b a^{\frac 12} \big| \\ 
	&\qquad \lesssim R^{-d} \int_{B_R \cap D} \mu^p,
	\end{align*}
	%where the first maximum is attained in $\ol \xi \in \partial B_1$, 
	where we use the identity $|A^2| = |A|^2$ for the spectral norm of any symmetric matrix $A \in \mathbb R^{d \times d}$ and the uniform boundedness $| |A|' : B | \lesssim |B|$ of the derivative of the spectral norm for any $A, B \in \mathbb R^{d \times d}$, while we assume w.l.o.g.\ that $\frac1t \big( X \big(a + t a^{\frac 12} b a^{\frac 12} \big) - X(a) \big)$ is positive for $t > 0$ sufficiently small. Note that we further employed the boundedness of $b$ and the fact that $b$ vanishes outside of $D$. Moreover, every instance of $a$ and $b$ inside an integral %as well as of $\xi$ and $\ol \xi$ 
	refers to $a(x)$ and $b(x)$, %$\xi(x, t)$, and $\ol \xi(x, t)$, 
	respectively. As a consequence, 
	\[
	\bigg\langle \Big( \sum_D \Big( \int_D \Big| \frac{\partial X(a)}{\partial a} \Big|_a \Big)^2 \Big)^r \bigg\rangle^\frac{1}{2r} \lesssim R^{-d} \bigg\langle \Big( \sum_{D \cap B_R \neq \emptyset} \Big( \int_{B_R \cap D} \mu^p \Big)^2 \Big)^r \bigg\rangle^\frac{1}{2r}.
	\]
	By recalling \eqref{eq:beta}, we notice that the number of subdomains $D$ obeying $D \cap B_R \neq \emptyset$ equals (up to fixed constants)
	\[
	\int_0^R \Big( \frac{l}{(l+1)^\beta} \Big)^{d-1} \frac{dl}{(l+1)^\beta} \lesssim \int_0^R l^{(1-\beta)(d-1) - \beta} dl \lesssim R^{d(1-\beta)}.
	\]
	Jensen's inequality then leads to 
	\[
	\bigg\langle \Big( \sum_D \Big( \int_D \Big| \frac{\partial X(a)}{\partial a} \Big|_a \Big)^2 \Big)^r \bigg\rangle^\frac{1}{2r} \lesssim R^{-\frac{d}{2} (1+\beta)} \bigg\langle R^{-d(1-\beta)} \sum_{D \cap B_R \neq \emptyset} \Big( \int_{D} \mu^p \Big)^{2r} \bigg\rangle^\frac{1}{2r}.
	\]
	Likewise, any subdomain $D$ can be covered by at most $|D| \lesssim R^{\beta d}$ unit balls $B_1(x_k)$ with appropriate $x_k \in D$, hence, applying Jensen's inequality once more results in 
	\[
	\bigg\langle \Big( \sum_D \Big( \int_D \Big| \frac{\partial X(a)}{\partial a} \Big|_a \Big)^2 \Big)^r \bigg\rangle^\frac{1}{2r} \lesssim R^{-\frac{d}{2} (1-\beta)} \bigg\langle R^{-d(1-\beta)} \sum_{D \cap B_R \neq \emptyset} R^{-\beta d} \sum_k \Big( \int_{B_1(x_k)} \mu^p \Big)^{2r} \bigg\rangle^\frac{1}{2r}.
	\]
	Pulling the expectation inside and using the stationarity of the underlying ensemble, we deduce 
	\[
	\bigg\langle \Big( \sum_D \Big( \int_D \Big| \frac{\partial X(a)}{\partial a} \Big|_a \Big)^2 \Big)^r \bigg\rangle^\frac{1}{2r} \lesssim R^{-\frac{d}{2} (1-\beta)} \bigg\langle \Big( \int_{B_1} \mu^p \Big)^{2r} \bigg\rangle^\frac{1}{2r}.
	\]
	Owing to \eqref{eq:strexpmom_mu} and Lemma \ref{lemma:expmom_polymom}, we know that 
	$
	\big\langle ( \int_{B_1} \mu^p )^{2r} \big\rangle^\frac{1}{2r} \lesssim r^\frac{1}{\alpha},
	$
	which gives rise to \eqref{eq:sgbound}. Together with \eqref{eq:2rmoment}, this results in 
	\[
	\Big\langle \big( X(a) - \langle X(a) \rangle \big)^{2r} \Big\rangle^\frac{1}{2r} \lesssim r^\frac{\alpha + 1}{\alpha} R^{-\frac{d}{2} (1-\beta)}.
	\]
	An elementary argument shows that 
	$
	\big\langle \big( X(a) - \langle X(a) \rangle \big)^{\frac{\alpha}{\alpha + 1} r} \big\rangle^\frac{1}{r} \lesssim r R^{-\frac{\alpha}{\alpha + 1} \frac{d}{2} (1-\beta)},
	$
	which by Lemma \ref{lemma:expmom_polymom} entails 
	\[
	\Big\langle \exp \Big( \frac{1}{C} R^{\frac{\alpha}{\alpha + 1} \frac{d}{2} (1-\beta)} \big( X(a) - \langle X(a) \rangle \big)^{\frac{\alpha}{\alpha + 1}} \Big) \Big\rangle < 2 
	\]
	for a sufficiently large constant $C > 0$ depending only on fixed model parameters. Therefore, 
	\begin{align}
	\label{eq:prob_delta}
	\Big\langle I \Big( \fint_{B_R} \mu^p - \langle \mu^p \rangle > \delta \Big) \Big\rangle \lesssim \exp \Big(\!-\frac{1}{C} \delta^{\frac{\alpha}{\alpha + 1}} R^{\frac{\alpha}{\alpha + 1} \frac{d}{2} (1-\beta)} \Big).
	\end{align}
	
	\medskip
	\emph{Step~2. Stretched exponential moments for $r_e$.}
	For any $r_0 > 1$ we now estimate the probability of the event $r_e > r_0$ as follows: 
	\begin{align*}
	\big\langle I(r_e > r_0) \big\rangle &\leq \Big\langle I \Big( \exists \, r > r_0 \ : \ \fint_{B_r} \mu^p > (2K)^p \ \lor \ \fint_{B_r} \lambda^{-q} > (2K)^q \Big) \Big\rangle \\ 
	&\leq \Big\langle I \Big( \exists \, n \geq n_0 \ : \ \fint_{B_{b^n}} \mu^p > \frac{(2K)^p}{b^d} \ \lor \ \fint_{B_{b^n}} \lambda^{-q} > \frac{(2K)^q}{b^d} \Big) \Big\rangle
	\end{align*}
	where $b = b(d, p, q, K) \in (1,2)$ is a constant specified below and $n_0, n \in \mathbb N$ satisfy $b^{n_0-1} < r_0 \leq b^{n_0}$ and $b^{n-1} < r \leq b^n$. Notice that $\fint_{B_{b^n}} \mu^p > (\frac{r}{b^n})^d (2K)^p > b^{-d} (2K)^p$ according to the assumption in the first line and that an analogous estimate holds for $\lambda^{-q}$. 
	%We introduce the random variable 
	%\[
	%X_L^S(A) \coleq \fint_{B_L} \big( A^S - \big\langle A^S \big\rangle \big) 
	%\]
	%for $L>0$, $S \geq 1$, and random fields $A$. 
	The previous estimate is continued via 
	\begin{align*}
	\big\langle I(r_e > r_0) \big\rangle \leq \sum_{n = n_0}^\infty \Big\langle I \Big( \fint_{B_{b^n}} \mu^p > \frac{(2K)^p}{b^d} \Big) \Big\rangle + \sum_{n = n_0}^\infty \Big\langle I \Big( \fint_{B_{b^n}} \lambda^{-q} > \frac{(2K)^q}{b^d} \Big) \Big\rangle.
	\end{align*}
	We are hence in a position to employ \eqref{eq:prob_delta} after choosing $b \in (1,2)$ sufficiently close to $1$ in order to guarantee that the lower bounds inside the indicator functions subsequently stay positive. Besides, we only provide the argument for the term involving $\mu$, while the same reasoning also applies to the other term. This yields 
	\begin{align*}
	&\sum_{n = n_0}^\infty \Big\langle I \Big( \fint_{B_{b^n}} \mu^p > \frac{(2K)^p}{b^d} \Big) \Big\rangle \leq \sum_{n = n_0}^\infty \Big\langle I \Big( \fint_{B_{b^n}} \mu^p - \langle \mu^p \rangle > \frac{(2K)^p}{b^d} - K^p \Big) \Big\rangle \\ 
	&\qquad \leq \sum_{n = n_0}^\infty \exp \Big( -\frac{1}{\ol C} \Big( \frac{(2K)^p}{b^d} - K^p \Big)^\frac{\alpha}{\alpha + 1} \big( b^n \big)^{\frac{\alpha}{\alpha + 1} \frac{d}{2} (1-\beta)} \Big) \\ 
	&\qquad \leq C_0 \exp \Big(\!-c_1 r_0^{\frac{\alpha}{\alpha + 1} \frac{d}{2} (1-\beta)} \Big)
	\end{align*}
	together with (large) constants $\ol C, C_0 \geq 1$, and a (small) constant $c_1 \in (0, 1)$.   
	%\[
	%\pi_*(\ell) \simeq 
	%\begin{cases}
	%(\ell + 1)^\alpha, &\alpha < d, \\ 
	%(\ell + 1)^d \log^{-1}(\ell + 2), &\alpha = d, \\ 
	%(\ell + 1)^d, &\alpha > d,
	%\end{cases}
	%\qquad \text{and} \qquad 
	%\alpha_* \coleq 
	%\begin{cases}
	%\alpha, &\alpha < d, \\ 
	%d - \epsilon, &\alpha = d, \\ 
	%d, &\alpha > d.
	%\end{cases}
	%\]
	For the third inequality above, we pull out the factor $\exp \big( -c_1 (b^{n_0})^{\frac{\alpha}{\alpha + 1} \frac{d}{2} (1-\beta)} \big)$ and estimate the remaining sum using the crude bound $(b^n)^{\frac{\alpha}{\alpha + 1} \frac{d}{2} (1-\beta)} - (b^{n_0})^{\frac{\alpha}{\alpha + 1} \frac{d}{2} (1-\beta)} \gtrsim \log(b) (n - n_0)$. 
	We can now derive moment bounds of order $k \geq 1$ via 
	\begin{align*}
	\big\langle r_e^k \big\rangle &= \Big\langle \int_0^\infty I(r_e > r) \frac{\d}{\d r} r^k \d r \Big\rangle \leq \int_0^\infty C_0 \exp \Big(\!-c_1 r^{\frac{\alpha}{\alpha + 1} \frac{d}{2} (1-\beta)} \Big) \frac{\d}{\d r} r^k \d r \\ 
	&= \int_0^\infty C_0 c_1 {\frac{\alpha}{\alpha + 1} \frac{d}{2} (1-\beta)} r^{\frac{\alpha}{\alpha + 1} \frac{d}{2} (1-\beta) - 1} \exp \Big(\!-c_1 r^{\frac{\alpha}{\alpha + 1} \frac{d}{2} (1-\beta)} \Big) r^k \d r. 
	\end{align*}
	By means of the substitution $t = c_1 r^{\frac{\alpha}{\alpha + 1} \frac{d}{2} (1-\beta)}$, the last expression rewrites as 
	\begin{align*}
	\big\langle r_e^k \big\rangle &= \int_0^\infty C_0 e^{-t} \Big( \frac{t}{c_1} \Big)^{\frac{\alpha + 1}{\alpha} \frac{2k}{d(1-\beta)}} \d t = C_0 c_1^{-\frac{\alpha + 1}{\alpha} \frac{2k}{d(1-\beta)}} \Gamma \Big( \frac{\alpha + 1}{\alpha} \frac{2k}{d(1-\beta)} + 1 \Big).
	\end{align*}
	The stretched exponential bound \eqref{eq:strexpmom_re} for $r_e$ of order $\frac{\alpha}{\alpha + 1} \frac{d}{2} (1-\beta)$ now immediately follows as 
	\begin{align}
	\label{eq:strexpmom_prooffinal}
	\Big\langle \exp \Big( \frac1C r_e^{\frac{\alpha}{\alpha + 1} \frac{d}{2} (1-\beta)} \Big) \Big\rangle \leq 1 + \sum_{k=1}^\infty \frac{\big\langle r_e^{\frac{\alpha}{\alpha + 1} \frac{d}{2} (1-\beta) k} \big\rangle}{C^k k!} \leq 1 + C_0 \sum_{k=1}^\infty \frac{\Gamma(k+1)}{C^k c_1^k k!} < 2
	\end{align}
	for $C > 0$ large enough. %In case that the stronger hypothesis \eqref{eq:alsurebound_mu} is imposed, \eqref{eq:sgbound} obviously holds without the factor $r^\frac{1}{\alpha}$. Consequently, \eqref{eq:prob_delta} and in particular \eqref{eq:strexpmom_prooffinal} hold with the factor $\frac{\alpha}{\alpha + 1}$ being replaced by $1$, thereby showing \eqref{eq:alsurebound_re}.
\end{proof}

\subsection{Proof of Proposition \ref{propsensitivity}: A sensitivity estimate for average integrals}

\begin{remark}
	\label{rem:a12}
	The reason for demanding symmetric coefficient fields $a(x) = a(x)^T$ is mainly related to the subsequent proof of Proposition \ref{propsensitivity}, which does not seem to generalize to the case of non-symmetric $a$. In particular, the arguments in \eqref{eq:ainv_needed} and \eqref{eq:a12_needed} heavily rely on the symmetry of $a$. In \eqref{eq:a12_needed}, we smuggle in $a^{-\frac 12}$ and $a^\frac 12$ leading to $|g|_{a^{-1}}$ and $|\psi|_a$ after applying H\"older's inequality. 
	In the absence of symmetry and assuming that an appropriate notion of the square root of a matrix is chosen, 
	the terms which we insert should still cancel and be of the order $-\frac 12$ and $\frac 12$ w.r.t.\ $a$. Owing to \eqref{eq:ainv_needed} and the fact that the matrix $a$ should partially cancel within the norm $|\cdot|_{a^{-1}}$, we see that the definition of $|\cdot|_{a^{-1}}$ has to be of the form $|g|_{a^{-1}}^2 \coleq g^T (a^\frac 12)^{-T} (a^\frac 12)^{-1} g$. Since \eqref{eq:ainv_needed} shall be controlled in terms of $|\cdot|_a$ in the subsequent estimate, $|\cdot|_a$ needs to be defined as $|\psi|_a^2 \coleq \psi^T (a^\frac12)^T a^\frac 12 \psi$. But the structure of $|\cdot|_{a^{-1}}$ and $|\cdot|_a$ now prevents us from proceeding as in \eqref{eq:a12_needed} unless $a$ is symmetric. 
\end{remark}

\begin{proof}[Proof of Proposition \ref{propsensitivity}]
We follow a strategy similar to the one in \cite{GNO20v2},\, which separates the proof into several steps. Throughout the proof, we will use the notation 
\[
F \nabla (\phi, \sigma) \coleq \int \tilde g \cdot \nabla \phi + \int \bar g \cdot \nabla \sigma
\]
for compactly supported $g = (\tilde g, \bar g)$.

\medskip
\emph{Step~1. Energy estimate for all $r \geq r_e$.} 
We claim that any $a$-harmonic function $u$ on $\mathbb R^d$, i.e.\ any solution to 
\begin{align}
\label{eqaharm}
-\nabla \cdot a \nabla u = 0,
\end{align}
satisfies 
\begin{align}
\label{eqholefill}
\int_{B_r(x)} |\nabla u|_a^2 \leq 4^d \Big( \frac r R \Big)^{\varepsilon d} \int_{B_R(x)} |\nabla u|_a^2
\end{align}
for some $\varepsilon > 0$ and for all $R \geq r \geq r_e$, where generic constants here and in the subsequent proof only depend on the dimension $d$. 

%We begin with the Caccioppoli estimate 
%\[
%\int_{B_{r_e}(x)} |\nabla u|_a^2 \lesssim r_e^{-2} \int_{B_{2r_e}(x) \backslash B_{r_e}(x)} a ( u - c )^2
%\]
%where $c>0$ will be specified below and which easily follows from testing \eqref{eqaharm} with $\eta^2 (u-c)$. We further use $\eta$ as a cutoff function for $B_{r_e}(x)$ in $B_{2r_e}(x)$. We now aim to apply a weighted Poincar\'e inequality in the form 
%\[
%\int_{B_{r_e}(x)} |\nabla u|_a^2 \lesssim r_e^{-2} \int_{B_{2r_e}(x) \backslash B_{r_e}(x)} a ( u - c )^2
%\]
%\[
%c \coleq \int_{B_{2r_e}(x) \backslash B_{r_e}(x)} \frac{a}{\int_{B_{2r_e}(x) \backslash B_{r_e}(x)} a} \, u
%\]

Applying the Caccioppoli estimate for solutions to \eqref{eqaharm} from \cite[Lemma 3]{BFO18} entails 
\[
\fint_{B_{r_e}(x)} |\nabla u|_a^2 \lesssim K r_e^{-2} \bigg( \fint_{A} | u - \bar u |^\frac{2p}{p-1} \bigg)^\frac{p-1}{p}
\]
where we abbreviate $A \coleq B_{2r_e}(x) \backslash B_{r_e}(x)$ and $\bar u \coleq \int_{A} u$. Since the condition $\frac{1}{p} + \frac{1}{q} \leq \frac{2}{d}$ guarantees the embedding $W^{1,\frac{2q}{q+1}}(A) \hookrightarrow L^\frac{2p}{p-1}(A)$, we infer from Sobolev's inequality (observing the correct scaling w.r.t.\ $r_e$) that 
\begin{align*}
\fint_{B_{r_e}(x)} |\nabla u|_a^2 &\lesssim K r_e^{-2-d\frac{p-1}{p}} \bigg( \int_{A} | u - \bar u |^\frac{2p}{p-1} \bigg)^\frac{p-1}{p} \\ 
&\lesssim C_\mathrm{Sob}^2 K r_e^{-2-d\frac{p-1}{p}} r_e^{-d(\frac1p + \frac1q)} \bigg( \Big( \int_{A} | u - \bar u |^\frac{2q}{q+1} \Big)^\frac{q+1}{q} +r_e^2 \Big( \int_{A} | \nabla u |^\frac{2q}{q+1} \Big)^\frac{q+1}{q} \bigg) \nonumber \\
&\lesssim C_\mathrm{Sob}^2 C_\mathrm{Poi}^2 K r_e^{-2-d\frac{p-1}{p}} r_e^{-d(\frac1p + \frac1q)} r_e^2 \Big( \int_{A} | \nabla u |^\frac{2q}{q+1} \Big)^\frac{q+1}{q} \nonumber \\
&= C_\mathrm{Sob}^2 C_\mathrm{Poi}^2 K r_e^{-d\frac{q+1}{q}} \Big( \int_{A} | \nabla u |^\frac{2q}{q+1} \Big)^\frac{q+1}{q} \nonumber
\end{align*}
where we employed the correctly scaled Poincar\'e inequality for the third estimate. Thanks to H\"older's inequality (cf.\ \cite[Lemma 3]{BFO18}), we conclude that 
\begin{align*}
\fint_{B_{r_e}(x)} |\nabla u|_a^2 \lesssim C_\mathrm{Sob}^2 C_\mathrm{Poi}^2 K \Big( \fint_{A} | \nabla u |^\frac{2q}{q+1} \Big)^\frac{q+1}{q} \lesssim C_\mathrm{Sob}^2 C_\mathrm{Poi}^2 K^2 \fint_{A} | \nabla u |_a^2.
\end{align*}
Rewriting this estimate in terms of an explicit constant $E \coleq C_d C_\mathrm{Sob}^2 C_\mathrm{Poi}^2 K^2$, we get
\[
\int_{B_{r_e}(x)} |\nabla u|_a^2 \leq E \int_{B_{2r_e}(x) \backslash B_{r_e}(x)} |\nabla u|_a^2.
\] 
The bound in \eqref{eqholefill} now follows from a so-called hole-filling trick, which amounts to adding $E \int_{B_{r_e}(x)} |\nabla u|_a^2$ to both sides. This results in 
\[
\int_{B_{r_e}(x)} |\nabla u|_a^2 \leq \frac{E}{E+1} \int_{B_{2r_e}(x)} |\nabla u|_a^2 \leq \Big( \frac{E}{E+1} \Big)^n \int_{B_{2^n r_e}(x)} |\nabla u|_a^2
\]
where the second bound simply follows by iteration. Defining $\varepsilon > 0$, $n \in \mathbb N$, and $N \in \mathbb N$ via 
\begin{align}
\label{eq:defholefill}
\frac{E}{E+1} = 2^{-\varepsilon d}, \qquad 2^{n-1} r_e \leq r < 2^n r_e, \qquad 2^{N-1} r_e \leq R < 2^N r_e,
\end{align}
we arrive at 
\[
\int_{B_r(x)} |\nabla u|_a^2 \leq \int_{B_{2^n r_e}(x)} |\nabla u|_a^2 \leq 2^{-\varepsilon d (N-1-n)} \int_{B_{2^{N-1} r_e}(x)} |\nabla u|_a^2 \leq 2^{2\varepsilon d} \Big( \frac r R \Big)^{\varepsilon d} \int_{B_R(x)} |\nabla u|_a^2
\]
provided $N-1 \geq n$. 
In case that $N = n$, this inequality obviously holds true. To ensure that the expression on the right-hand side (or an upper bound thereof) is increasing for decreasing $\varepsilon$, we simply skip $\varepsilon \in (0,1)$ within the factor $2^{2\varepsilon d}$. 

\medskip
\emph{Step~2. Energy estimates for $r \geq r_*$.} We start by noting that 
\begin{align}
\label{eqcaccioppoli_large}
\fint_{B_R} | \nabla \phi_i + e_i |_a^2 \lesssim 1
\end{align}
for all $R \geq r$, where the constant on the right-hand side only depends on $d$ and $K$. This follows from a Caccioppoli estimate as stated in \cite[Lemma 3]{BFO18} since $r_* \geq r_e$ in particular ensures $R \geq r_e$. The definition of the minimal radius $r_*$ in Definition \ref{defminradius} then allows for a constant upper bound. 

As in \cite{GNO20v2}, we now claim that for all $\gamma \in (0,d)$ and any decaying functions $u$ and $g$ related via 
\begin{align}
\label{eqdivg}
-\nabla \cdot a \nabla u = \nabla \cdot g,
\end{align}
we have 
\begin{align}
\label{eqoptimalenergygamma}
\int_{B_r} |\nabla u|_a^2 \lesssim \int \Big( \frac{|x|}{r} + 1 \Big)^{-\gamma} |g|_{a^{-1}}^2
\end{align}
with generic constants only depending on $d$ and $\gamma$ in this paragraph. 
Restricting oneself by scaling to the case $r=1$, one is left to prove 
\[
\bigg( \int_{B_1} |\nabla u|_a^2 \bigg)^\frac12 \lesssim \bigg( \int_{B_1} |g|_{a^{-1}}^2 \bigg)^\frac12 + \sum_{n=1}^\infty \bigg( \frac{1}{(2^n)^d} \int_{2^{n-1}<|x|<2^n} |g|_{a^{-1}}^2 \bigg)^\frac12
\]
taking the following elementary estimate into account: 
\begin{align*}
&\bigg( \int_{B_1} |g|_{a^{-1}}^2 \bigg)^\frac12 + \sum_{n=1}^\infty \bigg( \frac{1}{(2^n)^d} \int_{2^{n-1}<|x|<2^n} |g|_{a^{-1}}^2 \bigg)^\frac12 \\ 
&\qquad \lesssim \bigg( \int_{B_1} \big( |x| + 1 \big)^{-\gamma} |g|_{a^{-1}}^2 \bigg)^\frac12 + \sum_{n=1}^\infty \big(2^n\big)^\frac{\gamma-d}{2} \bigg( \int_{2^{n-1}<|x|<2^n} \big( |x| + 1 \big)^{-\gamma} |g|_{a^{-1}}^2 \bigg)^\frac12 \\ 
&\qquad \lesssim \bigg( \int \big( |x| + 1 \big)^{-\gamma} |g|_{a^{-1}}^2 \bigg)^\frac12.
\end{align*}
As a result of the unique solvability of \eqref{eqdivg} in the class of decaying solutions, we may assume that $g$ is either supported in $B_1$ or in $B_{2^n} \backslash B_{2^{n-1}}$ for some $n \in \mathbb N$. In the first case, we employ the energy estimate for \eqref{eqdivg} to derive $\int_{B_1} |\nabla u|_a^2 \leq \int |\nabla u|_a^2 \lesssim \int_{B_1} |g|_{a^{-1}}^2$. If $\supp g \subset B_{2^n} \backslash B_{2^{n-1}}$, we additionally use the mean-value property from Lemma \ref{lemmac1alpha} to deduce $\int_{B_1} |\nabla u|_a^2 \lesssim (2^n)^{-d} \int_{|x| < 2^n} |\nabla u|_a^2 \lesssim (2^{n})^{-d} \int_{2^{n-1}<|x|<2^n} |g|_{a^{-1}}^2$.

In order to ensure an appropriate bound on the gradient of the flux correction, $\nabla \sigma$, we also need the subsequent result. For all $\gamma \in (0, d)$ and decaying functions $u$ and $g$ satisfying 
\[
-\Delta u = \nabla \cdot g,
\]
we have 
\begin{align}
\label{eqoptimalenergy-caldzygm}
\bigg( \int_{B_r} | \nabla u |^\frac{2p}{p+1} \bigg)^\frac{p+1}{p} \lesssim \int \Big( \frac{|x|}{r} + 1 \Big)^{-\gamma} |g|_{a^{-1}}^2, 
\end{align}
where generic constants may depend on $d$, $\gamma$, and $K$ in this context. As above, it is enough to prove the result for $r=1$ and under the additional assumption that $\supp g \subset B_2$ or $\supp g \subset B_{2^n} \backslash B_{2^{n-1}}$ for some $n \in \mathbb N$, $n \geq 2$. In the former case, we invoke a Calder\'on--Zygmund estimate (see, e.g., \cite[Subsections 7.1.2--7.1.3]{GM12}) to find 
\begin{align*}
\int_{B_1} | \nabla u |^\frac{2p}{p+1} \lesssim \int_{B_2} | g |^\frac{2p}{p+1} \lesssim \int_{B_2} \big| a^\frac12 \big|^\frac{2p}{p+1} \big| a^{-\frac12} g \big|^\frac{2p}{p+1} \lesssim \bigg( \int_{B_2} |a|^p \bigg)^\frac{1}{p+1} \bigg( \int_{B_2} |g|_{a^{-1}}^2 \bigg)^\frac{p}{p+1}. 
\end{align*}
As $r \geq r_\ast \geq r_e$, and since we work with the scaling $r = 1$, we conclude that 
\begin{align}
\label{eq:caldzyg_1}
\bigg( \int_{B_1} | \nabla u |^\frac{2p}{p+1} \bigg)^\frac{p+1}{p} %\lesssim \bigg( \int_{B_2} |a|^p \bigg)^\frac{1}{p} \int_{B_2} |g|_{a^{-1}}^2 
\lesssim \int_{B_2} |g|_{a^{-1}}^2.
\end{align}
In the latter case, we first recall that the mean-value property also holds true for the $L^\frac{2p}{p+1}$ norm. This follows from Jensen's inequality and the standard mean-value property of harmonic functions, namely $u = \frac{1}{|B_R|} \mathbb{1}_{B_R} \ast u$ and, hence, $\nabla u(x) = \fint_{B_R(x)} \nabla u(y) \, dy$ for $R > 0$: Specifying $R \coleq 2^{n-1} - 1$, we derive $|\nabla u(x)|^\frac{2p}{p+1} \leq \fint_{B_R(x)} |\nabla u(y)|^\frac{2p}{p+1} \, dy \leq \big( \frac{2^n}{R} \big)^d \fint_{B_{2^n}} |\nabla u(y)|^\frac{2p}{p+1} \, dy \leq 4^d \fint_{B_{2^n}} |\nabla u(y)|^\frac{2p}{p+1} \, dy$. As a consequence, %from Lemma \ref{lemmac1alpha} with $a \coleq \mathrm{id}$, 
%and from a Caccioppoli estimate followed by Sobolev's and Poincar\'e's inequality similar to \eqref{eq:cacc-sob-poinc}:
\begin{align*}
\int_{B_1} | \nabla u |^\frac{2p}{p+1} \lesssim \fint_{B_{2^n}} | \nabla u |^\frac{2p}{p+1}.
\end{align*}
Applying now an analogue of \eqref{eq:caldzyg_1} on $B_{2^n}$, we infer that 
\begin{align}
\label{eq:caldzyg_2}
\bigg( \int_{B_1} | \nabla u |^\frac{2p}{p+1} \bigg)^\frac{p+1}{p} &\lesssim (2^n)^{-d \frac{p+1}{p}} \bigg( \int_{|x| < 2^n} | \nabla u |^\frac{2p}{p+1} \bigg)^\frac{p+1}{p} \\ 
&\lesssim (2^n)^{-d} \int_{2^{n-1} < |x| < 2^n} |g|_{a^{-1}}^2. \nonumber
\end{align}
By the same arguments as above, we see that \eqref{eq:caldzyg_1} and \eqref{eq:caldzyg_2} give rise to \eqref{eqoptimalenergy-caldzygm}.

The generalization of the previous estimate \eqref{eqoptimalenergygamma} provided in \cite{GNO20v2} also holds in our situation. For any $0 < \gamma' < \gamma < d$ and functions $u$ and $g$ subject to \eqref{eqdivg}, one has 
\begin{align}
\label{eqoptimalenergygamma'}
\int \Big( \frac{|x|}{r} + 1 \Big)^{-\gamma} |\nabla u|_a^2 \lesssim \int \Big( \frac{|x|}{r} + 1 \Big)^{-\gamma'} |g|_{a^{-1}}^2,
\end{align}
where here the generic constants only depend on $d$, $\gamma$, and $\gamma'$. As above, we restrict ourselves to $r=1$. By using \eqref{eqoptimalenergygamma} with $r \coleq 2\rho$ therein for some $\rho \geq 1$, we find 
\[
\int_{\rho < |x| < 2\rho} |\nabla u|_a^2 \lesssim \int \Big( \frac{|x|}{2 \rho} + 1 \Big)^{-\gamma'} |g|_{a^{-1}}^2 \lesssim \rho^{\gamma'} \int \big( |x| + 1 \big)^{-\gamma'} |g|_{a^{-1}}^2. 
\]
Multiplying with $\rho^{-\gamma}$, we obtain $\int_{\rho < |x| < 2\rho} |x|^{-\gamma} |\nabla u|_a^2 \lesssim \rho^{\gamma'-\gamma} \int ( |x| + 1 )^{-\gamma'} |g|_{a^{-1}}^2$. Setting $\rho \coleq 2^n$, recalling $\gamma' < \gamma$, and taking the sum over $n \in \mathbb N$, we arrive at
\[
\int_{|x| \geq 1} \big( |x| + 1 \big)^{-\gamma} |\nabla u|_a^2 \lesssim \int \big( |x| + 1 \big)^{-\gamma'} |g|_{a^{-1}}^2.
\]
The case $|x| < 1$ is treated by the previous estimate \eqref{eqoptimalenergygamma} for $r=1$: 
\[
\int_{|x| < 1} \big( |x| + 1 \big)^{-\gamma} |\nabla u|_a^2 \leq \int_{|x| < 1} |\nabla u|_a^2 \lesssim \int \big( |x| + 1 \big)^{-\gamma'} |g|_{a^{-1}}^2.
\]

\medskip
\emph{Step~3. Sensitivity estimate for all $r \geq r_*$.} We proceed by following \cite{GNO20v2} and recall that the defining equations for the decaying functions $\phi$ and $\sigma_{jk}$ (where we skip the index $i$ for notational convenience) read 
\begin{align}
\label{eqphisigma_sensitivity}
-\nabla \cdot a ( \nabla \phi + e ) = 0, \qquad -\Delta \sigma_{jk} = \partial_j q_k - \partial_k q_j, \qquad q = a(\nabla\phi + e). % - a_\mathrm{hom} e.
\end{align}
Fixing an element $D$ of the underlying partition of $\mathbb R^d$, we shall write $a_D$ for a coefficient field which may differ from $a$ only inside of $D$. The corresponding solutions to \eqref{eqphisigma_sensitivity} for $a$ replaced by $a_D$ are then denoted by $\phi_D$ and $\sigma_{jkD}$. As a result, the differences $\phi - \phi_D$ and $\sigma - \sigma_{jkD}$ are subject to 
\begin{align*}
-\nabla \cdot a \nabla (\phi - \phi_D) &= \nabla \cdot (a - a_D) (\nabla \phi_D + e), \\ 
-\Delta (\sigma_{jk} - \sigma_{jkD}) &= \partial_j \big( a (\nabla \phi + e) - a_D (\nabla \phi_D + e) \big)_k - \partial_k \big( a (\nabla \phi + e) - a_D (\nabla \phi_D + e) \big)_j.
\end{align*}
Taking a linear combination with scalar coefficients $\{c_D\}_D$, we get 
\begin{align*}
-\nabla \cdot a \nabla \sum_D c_D (\phi - \phi_D) &= \nabla \cdot \sum_D c_D (a - a_D) (\nabla \phi_D + e), \\ 
-\Delta \sum_D c_D (\sigma_{jk} - \sigma_{jkD}) &= \partial_j \sum_D c_D \big( a (\nabla \phi + e) - a_D (\nabla \phi_D + e) \big)_k \\ 
&\quad - \partial_k \sum_D c_D \big( a (\nabla \phi + e) - a_D (\nabla \phi_D + e) \big)_j.
\end{align*}
With the help of estimate \eqref{eqoptimalenergygamma'} in Step 2, we now derive 
\begin{align*}
&\int_{B_r} \Big| \nabla \sum_D c_D (\phi - \phi_D) \Big|_a^2 + \int \Big( \frac{|x|}{r} + 1 \Big)^{-\gamma} \Big| \nabla \sum_D c_D (\phi - \phi_D) \Big|_a^2 \\ 
&\qquad \lesssim \int \Big( \frac{|x|}{r} + 1 \Big)^{-\gamma'} \Big| \sum_D c_D (a - a_D) (\nabla \phi_D + e) \Big|_{a^{-1}}^2 \\ 
&\qquad \leq \int \Big( \frac{|x|}{r} + 1 \Big)^{-\gamma'} \sum_D c_D^2 \Big| a^{-\frac 12} (a - a_D) a^{-\frac 12} a^{\frac 12} (\nabla \phi_D + e) \Big|^2 \\ 
&\qquad \leq \sum_D c_D^2 \sup_{x \in D} \big| a - a_D \big|_{a^{-1}}^2 \int_D \Big( \frac{|x|}{r} + 1 \Big)^{-\gamma'}  | \nabla \phi_D + e |_a^2.
\end{align*}
%We refer to Notation \ref{not:as} for the definition of $a^\frac S2$ and $a^{-\frac S2}$. 
Note that we crucially employed the inclusion $\supp (a - a_D) \subset D$ and the fact that the elements of the partition are disjoint when pulling the square inside the sum over all $D$. Moreover, all generic constants appearing in this part of the proof only depend on $d$, $\gamma$, and $\gamma'$. Next, we observe that 
\[
a (\nabla \phi + e) - a_D (\nabla \phi_D + e) = a \nabla (\phi - \phi_D) + (a - a_D) (\nabla \phi_D + e)
\]
together with the previous estimates and $\gamma' \leq \gamma$ leads to 
\begin{align}
&\int_{B_r} \Big| \sum_D c_D (\nabla \phi - \nabla \phi_D) \Big|_a^2 + \int \Big( \frac{|x|}{r} + 1 \Big)^{-\gamma} \Big| \sum_D c_D \big( a (\nabla \phi + e) - a_D (\nabla \phi_D + e) \big) \Big|_{a^{-1}}^2 \nonumber \\ 
&\qquad \leq \int_{B_r} \Big| \sum_D c_D (\nabla \phi - \nabla \phi_D) \Big|_a^2 + \int \Big( \frac{|x|}{r} + 1 \Big)^{-\gamma} \Big| a \nabla \sum_D c_D (\phi - \phi_D) \Big|_{a^{-1}}^2 \label{eq:ainv_needed} \\
&\qquad\qquad + \int \Big( \frac{|x|}{r} + 1 \Big)^{-\gamma'} \Big| \sum_D c_D (a - a_D) (\nabla \phi_D + e) \Big|_{a^{-1}}^2 \nonumber \\ 
&\qquad \lesssim \sum_D c_D^2 \sup_{x \in D} \big| a - a_D \big|_{a^{-1}}^2 \int_D \Big( \frac{|x|}{r} + 1 \Big)^{-\gamma'}  | \nabla \phi_D + e |_a^2. \nonumber
\end{align}
Similarly, we apply \eqref{eqoptimalenergy-caldzygm} to find 
\begin{align*}
\bigg( \int_{B_r} \Big| \nabla \sum_D c_D ( \sigma_{jk} - \sigma_{jkD} ) \Big|^\frac{2p}{p+1} \bigg)^\frac{p+1}{p} %\lesssim \int \Big( \frac{|x|}{r} + 1 \Big)^{-\gamma} \Big| \sum_D c_D \big( a (\nabla \phi + e) - a_D (\nabla \phi_D + e) \big) \Big|^2 \\ 
&\lesssim \int \Big( \frac{|x|}{r} + 1 \Big)^{-\gamma} \Big| \sum_D c_D \big( a (\nabla \phi + e) - a_D (\nabla \phi_D + e) \big) \Big|_{a^{-1}}^2 \\ 
&\lesssim \, \sum_D c_D^2 \sup_{x \in D} \big| a - a_D \big|_{a^{-1}}^2 \int_D \Big( \frac{|x|}{r} + 1 \Big)^{-\gamma'}  | \nabla \phi_D + e |_a^2. 
\end{align*}
Due to the definition of the functional 
\[
F\psi = \int g \cdot \psi \quad \text{with} \quad \Big( \fint_{B_r} |g|_{a^{-1}}^2 \Big)^\frac12 \lesssim K^\frac12 \Big( \fint_{B_r} |g|^\frac{2q}{q-1} \Big)^\frac{q-1}{2q} \lesssim r^{-d},
\]
we have 
\begin{align}
\label{eq:a12_needed}
|F\psi| = \Big| \int_{B_r} g^T a^{-\frac 12} a^\frac 12 \psi \Big| \leq \Big( \int_{B_r} |a^{-\frac12} g|^2 \Big)^\frac12 \Big( \int_{B_r} |a^{\frac12} \psi|^2 \Big)^\frac12 \leq \Big( \fint_{B_r} |\psi|_a^2 \Big)^\frac12
\end{align}
and 
\begin{align*}
|F\psi| \leq \Big( \int_{B_r} |g|^\frac{2p}{p-1} \Big)^\frac{p-1}{2p} \Big( \int_{B_r} |\psi|^\frac{2p}{p+1} \Big)^\frac{p+1}{2p} \lesssim r^{-d \frac{p+1}{2p}} \Big( \int_{B_r} |\psi|^\frac{2p}{p+1} \Big)^\frac{p+1}{2p} \lesssim \Big( \fint_{B_r} |\psi|^\frac{2p}{p+1} \Big)^\frac{p+1}{2p}.
\end{align*}
Consequently, we deduce that 
\begin{align*}
&r^d \Big| \sum_D c_D ( F\nabla \phi - F\nabla \phi_D ) \Big|^2 + r^d \Big| \sum_D c_D ( F\nabla \sigma_{jk} - F\nabla \sigma_{jkD} ) \Big|^2 \\ 
&\qquad \lesssim \sum_D c_D^2 \sup_{x \in D} \big| a - a_D \big|_{a^{-1}}^2 \int_D \Big( \frac{|x|}{r} + 1 \Big)^{-\gamma'}  | \nabla \phi_D + e |_a^2.
\end{align*}
Thus, an elementary $l^2$ duality argument guarantees the bound 
\begin{align*}
&r^d \sum_D | F\nabla \phi - F\nabla \phi_D |^2 + r^d \sum_D | F\nabla \sigma_{jk} - F\nabla \sigma_{jkD} |^2 \\ 
&\qquad \lesssim \sup_D \bigg( \sup_{x \in D} \big| a - a_D \big|_{a^{-1}}^2 \int_D \Big( \frac{|x|}{r} + 1 \Big)^{-\gamma'}  | \nabla \phi_D + e |_a^2 \bigg).
\end{align*}

We now specify $a_D \coleq a + t a^{\frac12} \delta a_D a^{\frac12}$ for $t \in \mathbb R$, $|t| \ll 1$, and a perturbation $\delta a_D$ being bounded and supported in $D$. In the limit $t \rightarrow 0$, the previous estimate becomes 
\begin{align*}
&r^d \sum_D \Big| \int_D \frac{\partial F \nabla \phi}{\partial a} : a^{\frac12} \delta a_D a^{\frac12} \Big|^2 + r^d \sum_D \Big| \int_D \frac{\partial F \nabla \sigma_{jk}}{\partial a} : a^{\frac12} \delta a_D a^{\frac12} \Big|^2 \\ 
&\qquad \lesssim \sup_D \sup_{x \in D} | \delta a_D |^2 \cdot \sup_D \int_D \Big( \frac{|x|}{r} + 1 \Big)^{-\gamma'}  | \nabla \phi_D + e |_a^2.
\end{align*}
Lemma \ref{lemmamatrixduality} now allows for an explicit estimate of the matrix-valued derivative of $F$ in terms of specific matrix norms. But as all matrix norms on $\mathbb R^{d \times d}$ are equivalent, Lemma \ref{lemmamatrixduality} also holds for the spectral norm $|\cdot|$ up to an additional constant. Hence, we arrive at 
\begin{align}
\label{eq:sensitivity_Fderiv}
&r^d \sum_D \Big( \int_D \Big| \frac{\partial F \nabla \phi}{\partial a} \Big|_a \Big)^2 + r^d \sum_D \Big( \int_D \Big| \frac{\partial F \nabla \sigma_{jk}}{\partial a} \Big|_a \Big)^2 \\ 
&\qquad \lesssim \sup_D \int_D \Big( \frac{|x|}{r} + 1 \Big)^{-\gamma'} | \nabla \phi_D + e |_a^2 \lesssim \sup_D \Big( \frac{\dist D}{r} + 1 \Big)^{-\gamma'} \int_D | \nabla \phi_D + e |_a^2. \nonumber
\end{align}

To derive the announced sensitivity estimate for $r \geq r_*$, we set $\rho \coleq \diam D$ and choose $x \in D$ such that $R \coleq \dist D = |x|$. The assumption $\rho \leq (R+1)^\beta$ with $\beta \in (0,1)$ on the coarseness of the partition as well as the hole-filling estimate \eqref{eqholefill} ensure that 
\[
\int_D | \nabla \phi_D + e |_a^2 \leq \int_{B_\rho(x)} | \nabla \phi_D + e |_a^2 \lesssim \Big( \frac{1}{R+1} \Big)^{\varepsilon d (1-\beta)} \int_{B_{R+1}(x)} | \nabla \phi_D + e |_a^2.
\]
%Enlarging the domain of integration and 
Applying \eqref{eqholefill} once more in case that $r_* > 2R+1$, we obtain  
\[
\int_D | \nabla \phi_D + e |_a^2 %\lesssim \Big( \frac{1}{R+1} \Big)^{\varepsilon d (1-\beta)} \Big( \frac{2R+1}{r_*} \Big)^{\varepsilon d} \int_{B_{\max\{2R+1,r_*\}}} | \nabla \phi_D + e |_a^2. %
\lesssim \Big( \frac{1}{R+r_*+1} \Big)^{\varepsilon d (1-\beta)} \int_{B_{\max\{2R+1,r_*\}}} | \nabla \phi_D + e |_a^2.
\]
While this estimate trivially follows from the previous one in case that $r_* \leq 2R+1$, we treat the additional factor from the hole-filling estimate in case that $r_* > 2R+1$ via $\big( \frac{1}{R+1} \big)^{\varepsilon d (1-\beta)} \big( \frac{2R+1}{r_*} \big)^{\varepsilon d} \leq \big( \frac{1}{R+1} \big)^{\varepsilon d (1-\beta)} \big( \frac{2R+1}{r_*} \big)^{\varepsilon d (1-\beta)} \lesssim \big( \frac{1}{r_*} \big)^{\varepsilon d (1-\beta)} \lesssim \big( \frac{1}{R+r_*+1} \big)^{\varepsilon d (1-\beta)}$. The remaining integral is controlled via the Caccioppoli estimate \eqref{eqcaccioppoli_large}, which results in 
\[
\int_D | \nabla \phi_D + e |_a^2 \lesssim \Big( \frac{1}{R+r_*+1} \Big)^{\varepsilon d (1-\beta)} (R+r_*+1)^d \leq (R+r)^{d(1-\varepsilon(1-\beta))}
\]
recalling $r \geq r_* \geq 1$. Going back to \eqref{eq:sensitivity_Fderiv} and defining $\gamma' \coleq d(1-\varepsilon(1-\beta)) < d$, we conclude that 
\begin{align}
\label{eq:sensitivity_largescale}
r^d \sum_D \Big( \int_D \Big| \frac{\partial F \nabla \phi}{\partial a} \Big|_a \Big)^2 + r^d \sum_D \Big( \int_D \Big| \frac{\partial F \nabla \sigma_{jk}}{\partial a} \Big|_a \Big)^2 \lesssim r^{d(1-\varepsilon(1-\beta))}.
\end{align}

\medskip
\emph{Step~4. Sensitivity estimate for all $r \geq r_e$.} The range of radii $r \in [r_e,r_*)$ is covered by applying the result from the previous step to the functional 
\[
\tilde F\psi \coleq \Big( \frac{r}{r_*} \Big)^\frac{d}{2} F\psi.
\]
Since \eqref{eq:a12_needed} holds true, we infer 
\[
\big| \tilde F\psi \big|^2 = \Big( \frac{r}{r_*} \Big)^d |F\psi|^2 \leq \Big( \frac{r}{r_*} \Big)^d \fint_{B_r} |\psi|_a^2 \leq \fint_{B_{r_*}} |\psi|_a^2.
\]
This enables us to employ \eqref{eq:sensitivity_largescale} for $\tilde F$ and $r = r_*$ therein resulting in 
\begin{align}
\label{eq:sensitivity_smallscale}
(r_*)^d \Big( \frac{r}{r_*} \Big)^d \sum_D \Big( \int_D \Big| \frac{\partial F \nabla \phi}{\partial a} \Big|_a \Big)^2 + (r_*)^d \Big( \frac{r}{r_*} \Big)^d \sum_D \Big( \int_D \Big| \frac{\partial F \nabla \sigma_{jk}}{\partial a} \Big|_a \Big)^2 \lesssim (r_*)^{d(1-\varepsilon(1-\beta))}.
\end{align}
Combining \eqref{eq:sensitivity_largescale} and \eqref{eq:sensitivity_smallscale} establishes the announced estimate \eqref{eq:sensitivity}. 
\end{proof}

\subsection{Auxiliary results on the sublinear growth of the extended corrector $(\phi, \sigma)$}
%\begin{lemma}[Higher moments of random variables {\cite[Proposition 1.10]{DG20}}]
%\label{lemmamomentcontrol}
%Assume that the random field $a$ satisfies the multiscale spectral gap (MSG) and the multiscale logarithmic Sobolev inequality (MLSI) from Definition \ref{deffuncineq}. Then, there exists a constant $C = C(\pi,d)$ such that for all $1 \le P < \infty$ and all $\sigma(a)$-measurable random variables $X(a)$, we have
%\begin{align*}
%\big\langle (X(a) - \langle X(a) \rangle)^{2P}\big\rangle \leq (CP)^P \bigg\langle \Big(\int_1^\infty \int_{\R^d} \big(\partial^{\textnormal{fct}}_{x,\ell} \, X(a) \big)^2 \d x \frac{\pi(\ell)}{\ell^d} \,\textnormal{d}\ell \Big)^P\bigg\rangle.
%\end{align*}
%%	Notice that when the weighted is concetrated on a positive $\ell$, that is, when $\pi = \delta_\ell$, we get
%%	\begin{align*}
%%	\mathbb E\big[(X - \mathbb E[X])^{2P}\big]^\frac{1}{P} \le CP \mathbb E\bigg[\Big(\ell^{-d}\int_{\R^d} \big( \partial^{\textnormal{fct}}_{x,\ell} \, X \big)^2 \d x  \Big)^P\bigg]^\frac{1}{P}
%%	\end{align*}
%\end{lemma}

\begin{lemma}
\label{lemmagrowth}
For $0 < L < \rho < \infty$, $1 \le s < d$, $s \leq S < \infty$ such that $\theta \coleq d(\frac 1s - \frac 1S) \in [0,1)$, and $u \in W^{1,s}(B_r)$, we have
	\begin{align}
	\label{eqavu}
	\frac{1}{\rho} \biggl( \fint_{B_\rho} \Big|u - \fint_{B_\rho} u \Big|^{S} \biggr)^{\frac{1}{S}} \le C(d) \biggl( \fint_{B_\rho} \Big|\fint_{B_L(x)} \nabla u \Big|^{s} \d x \biggr)^{\frac{1}{s}} + C(d) \biggl( \frac{L}{\rho} \biggr)^{1-\theta} \biggl( \fint_{B_{2\rho}} |\nabla u|^{s} \biggr)^{\frac{1}{s}}.
	\end{align}
\end{lemma}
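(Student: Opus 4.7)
The plan is to split $u$ into a smoothed part and an error, by introducing the local average
\[
u_L(x) \coleq \fint_{B_L(y)} u(y+x-y) \, \d y = \fint_{B_L(x)} u,
\]
and then to estimate each piece separately. Writing $u - \fint_{B_\rho} u$ as
\[
(u - u_L) + \Big( u_L - \fint_{B_\rho} u_L \Big) + \Big( \fint_{B_\rho} u_L - \fint_{B_\rho} u \Big),
\]
and bounding the last term by the first through Jensen, it suffices by the triangle inequality in $L^S(B_\rho)$ to control $u_L - \fint_{B_\rho} u_L$ and $u - u_L$ separately.

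For the smoothed part, I would use the standard commutation $\nabla u_L(x) = \fint_{B_L(x)} \nabla u$, together with the Sobolev--Poincar\'e inequality on $B_\rho$. Since $1 \leq s < d$, the Sobolev--Poincar\'e inequality yields
\[
\frac{1}{\rho} \Big( \fint_{B_\rho} \Big| u_L - \fint_{B_\rho} u_L \Big|^{s^*} \Big)^{1/s^*} \lesssim \Big( \fint_{B_\rho} |\nabla u_L|^s \Big)^{1/s},
\]
with $s^* = ds/(d-s)$. Since $\theta \le 1$ means $S \leq s^*$, Jensen's inequality on the probability measure $|B_\rho|^{-1} \, \d x$ downgrades the $L^{s^*}$-norm to the desired $L^S$-norm, producing the first term in \eqref{eqavu}.

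For the error $u - u_L$, the starting point is the classical pointwise Riesz potential bound
\[
|u(x) - u_L(x)| \lesssim \int_{B_L(x)} \frac{|\nabla u(y)|}{|x-y|^{d-1}} \, \d y,
\]
valid for Sobolev functions. Because $L < \rho$, for every $x \in B_\rho$ the ball $B_L(x)$ lies in $B_{2\rho}$, so the integrand only sees the restriction of $|\nabla u|$ to $B_{2\rho}$. The right-hand side is then a convolution $K * f$ with $K(z) = |z|^{-(d-1)} \mathbb{1}_{B_L}(z)$ and $f = |\nabla u|\mathbb{1}_{B_{2\rho}}$. Young's convolution inequality in the form $\|K*f\|_{L^S(\R^d)} \leq \|K\|_{L^r(\R^d)} \|f\|_{L^s(\R^d)}$ with $1 + \tfrac{1}{S} = \tfrac{1}{r} + \tfrac{1}{s}$ gives $r = \tfrac{d}{d-\theta}$. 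Since $\theta < 1 < d/(d-1)$ ensures $r(d-1) < d$, the kernel is integrable with $\|K\|_{L^r} \sim L^{1-\theta}$, yielding
\[
\Big( \int_{B_\rho} |u - u_L|^S \Big)^{1/S} \lesssim L^{1-\theta} \Big( \int_{B_{2\rho}} |\nabla u|^s \Big)^{1/s}.
\]
Passing to averages produces the factor $L^{1-\theta} \rho^{\theta}$, which, divided by $\rho$, gives exactly $(L/\rho)^{1-\theta}$ times the second factor in \eqref{eqavu}.

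The main technical point is the Riesz potential / Young's convolution step: one must verify that the condition $\theta < 1$ places the kernel exponent $r = d/(d-\theta)$ in the range where $|z|^{-(d-1)}$ is locally integrable to the $r$-th power, and that the $L^r$ norm of $K$ on $B_L$ indeed scales as $L^{1-\theta}$. Everything else is the standard chain of Poincar\'e--Sobolev embeddings and Jensen's inequality, with careful bookkeeping of the scaling factors $\rho^{d/s - d/S} = \rho^\theta$ to match the stated form of \eqref{eqavu}.
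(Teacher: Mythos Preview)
Your proof is correct and uses the same overall decomposition as the paper (split $u$ into the mollified part $u_L$ and the remainder $u-u_L$, handle $u_L - \fint u_L$ by Sobolev--Poincar\'e, handle $u-u_L$ separately), but the treatment of the error term $u-u_L$ is genuinely different. The paper does not use the Riesz potential bound; instead it interpolates via H\"older between the $L^{s^*}$-norm of the centered error (controlled by Sobolev--Poincar\'e: $\|\nabla(u-u_L)\|_{L^s} \lesssim \|\nabla u\|_{L^s(B_{2})}$) and the $L^s$-norm of $u-u_L$ (controlled by the elementary convolution/Poincar\'e estimate $\|u-u_L\|_{L^s} \lesssim L\|\nabla u\|_{L^s(B_2)}$), so that the interpolation weights $\theta$ and $1-\theta$ produce exactly $L^{1-\theta}$. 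Your route via the pointwise bound $|u(x)-u_L(x)|\lesssim \int_{B_L(x)}|\nabla u|/|x-y|^{d-1}$ and Young's inequality with kernel exponent $r=d/(d-\theta)$ is more direct and gives the $L^S$-bound in one stroke, at the cost of invoking the Riesz potential representation; the paper's argument is slightly more elementary in that it only needs Poincar\'e-level convolution estimates and H\"older, but requires an extra interpolation step. Your verification that $\theta<1$ is precisely what makes $|z|^{-(d-1)}\in L^r_{\mathrm{loc}}$ with $\|K\|_{L^r}\sim L^{1-\theta}$ is the key observation, and it is sound.
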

\begin{proof}
	By scaling we can assume $\rho=1$, in which case~\eqref{eqavu} reduces for some $L \le 1$ to
	\begin{align}
	\label{avg02}
	\biggl( \int_{B_1} \Big|u - \fint_{B_1} u \Big|^{S} \biggr)^{\frac{1}{S}} \le C(d) \biggl( \int_{B_1} \Big|\fint_{B_L(x)} \nabla u \Big|^{s} \d x \biggr)^{\frac{1}{s}} + C(d) L^{1-\theta} \biggl( \int_{B_{2}} |\nabla u|^{s} \biggr)^{\frac{1}{s}}.
	\end{align}  
	To show this, we apply the triangle inequality to estimate the left-hand side by 
	\begin{align*}
	\label{avg03}
	\biggl( \int_{B_1} \Big|u - \fint_{B_1} u \Big|^{S} \biggr)^{\frac{1}{S}} \le \biggl( \int_{B_1} \Big|(u - u_L) - \fint_{B_1} (u-u_L) \Big|^{S} \biggr)^{\frac{1}{S}} + \biggl( \int_{B_1} \Big|u_L - \fint_{B_1} u_L \Big|^{S} \biggr)^{\frac{1}{S}},
	\end{align*}  
	where $u_L(x) \coleq \fint_{B_L(x)} u$. Combining Jensen's, Sobolev's, and Poincar\'e's inequalities (while using $\frac{1}{S} \ge \frac{1}{s} - \frac 1d$), we get for the second term on the right-hand side
	\begin{align*}
	\biggl( \int_{B_1} \Big|u_L - \fint_{B_1} u_L \Big|^{S} \biggr)^{\frac{1}{S}} &\le C(d) \biggl( \int_{B_1} \Big|u_L - \fint_{B_1} u_L \Big|^{\frac{ds}{d-s}} \biggr)^{\frac{d-s}{ds}} \\ 
	&\le C(d) \biggl( \int_{B_1} |\nabla(u_L)|^{s} \biggr)^{\frac{1}{s}} = C(d) \biggl( \int_{B_1} |(\nabla u)_L)|^{s} \biggr)^{\frac{1}{s}}.
	\end{align*}
	For the first term, we apply H\"older's inequality (with exponents $\frac{s}{s-S(1-\theta)}, \frac{s}{S(1-\theta)} \in (1,\infty)$ after splitting the integrand as $|\cdot|^S = |\cdot|^{\theta S} |\cdot|^{(1-\theta)S}$) and Jensen's inequality, followed in the next step by the above Sobolev inequality and the convolution estimate: 
	\begin{align*}
	&\biggl( \int_{B_1} \Big|(u - u_L) - \fint_{B_1} (u-u_L) \Big|^{S} \biggr)^{\frac{1}{S}} \\
	&\qquad \le C(d) \biggl( \int_{B_1} \Big|(u - u_L) - \fint_{B_1} (u-u_L) \Big|^{\frac{ds}{d-s}} \biggr)^{\theta\frac{d-s}{ds}}\biggl( \int_{B_1} |u - u_L|^s \biggr)^{\frac{1-\theta}{s}}
	\\
	&\qquad \le C(d) \biggl( \int_{B_1} |\nabla(u - u_L)|^s \biggr)^{\frac{\theta}{s}} L^{1-\theta} \biggl( \int_{B_2} |\nabla u|^s \biggr)^{\frac{1-\theta}{s}} 
	\le C(d) L^{1-\theta} \biggl( \int_{B_2} |\nabla u|^s \biggr)^{\frac{1}{s}},
	\end{align*}
	which proves \eqref{avg02}. %For $\theta = 1$, the last estimate follows without the use of H\"older's inequality.
\end{proof}

\begin{corollary}%[Sublinear growth of the correctors]
\label{corgrowth}
For $0 < L < r$, $\theta \coleq \frac d2 \big(\frac 1p + \frac 1q\big) \in (0, 1)$, and 
\[
\bar K \coleq \sup_{R \ge r}\ \bigg( \fint_{B_R} |a|^p \bigg)^{\frac 1p} + \bigg( \fint_{B_R} |a^{-1}|^q \bigg)^{\frac 1q},
\]
the extended corrector $(\phi, \sigma)$ from Definition \ref{defcorrector} satisfies 
\begin{align}
\label{coravg}
&\frac{1}{r} \biggl( \fint_{B_r} \Big|\phi - \fint_{B_r} \phi \Big|^{\frac{2p}{p-1}} \biggr)^{\frac{p-1}{2p}} + \frac{1}{r} \biggl( \fint_{B_r} \Big|\sigma - \fint_{B_r} \sigma \Big|^{\frac{2q}{q-1}} \biggr)^{\frac{q-1}{2q}} \\
&\quad \lesssim  \biggl( \fint_{B_r} \Big|\fint_{B_L(x)} \nabla \phi \Big|^{\frac{2q}{q+1}} \d x \biggr)^{\frac{q+1}{2q}} + \biggl( \fint_{B_r} \Big|\fint_{B_L(x)} \nabla \sigma \Big|^{\frac{2p}{p+1}} \d x \biggr)^{\frac{p+1}{2p}} \nonumber \\ 
&\qquad + \bar K^{\frac{1}{2}} \biggl( \frac{L}{r} \biggr)^{1-\theta}  \biggl( 1 + \frac{1}{r} \biggl( \fint_{B_{8r}} \Big|\phi - \fint_{B_{8r}} \phi \Big|^{\frac{2p}{p-1}} \biggr)^{\frac{p-1}{2p}} + \frac{1}{r} \biggl( \fint_{B_{8r}} \Big|\sigma - \fint_{B_{8r}} \sigma \Big|^{\frac{2q}{q-1}} \biggr)^{\frac{q-1}{2q}} \biggr). \nonumber
\end{align}
\end{corollary}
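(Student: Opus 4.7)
The strategy is to apply Lemma~\ref{lemmagrowth} separately to $\phi$ and $\sigma$ with carefully matched Sobolev exponents so that the two resulting ``error gap'' parameters coincide with the $\theta$ in the statement, and then to dominate the leftover tail gradient norms on $B_{2r}$ by a combination of H\"older in the $a$/$a^{-1}$ norms and (degenerate) Caccioppoli estimates of the type already used in the proof of Lemma~\ref{lemmac1alpha}.

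The first step is bookkeeping on exponents. For $\phi$ I take $s = \frac{2q}{q+1}$ and $S = \frac{2p}{p-1}$; for $\sigma$ I take $s = \frac{2p}{p+1}$ and $S = \frac{2q}{q-1}$. In either case one computes
\[
d\Big(\tfrac 1s - \tfrac 1S\Big) = \tfrac d2 \Big(\tfrac 1p + \tfrac 1q\Big) = \theta,
\]
which lies in $(0,1)$ by the admissibility assumption \eqref{eq:defk}. Applying Lemma~\ref{lemmagrowth} at $\rho = r$ component-wise and summing reproduces the left-hand side of \eqref{coravg} on the left, the two convolution-averaged gradient terms on the right, and leaves two unwanted contributions of the form $(L/r)^{1-\theta} (\fint_{B_{2r}} |\nabla \phi|^{2q/(q+1)})^{(q+1)/(2q)}$ and $(L/r)^{1-\theta}(\fint_{B_{2r}} |\nabla\sigma|^{2p/(p+1)})^{(p+1)/(2p)}$ to be absorbed into the $\bar K^{1/2}$-weighted block on the right-hand side of \eqref{coravg}.

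I would handle the $\phi$-tail by inserting the $a$-weight via $|\nabla\phi|^{2q/(q+1)} \le \lambda^{-q/(q+1)} |\nabla\phi|_a^{2q/(q+1)}$ and then applying H\"older's inequality on $B_{2r}$ with exponents $q+1$ and $\frac{q+1}{q}$ to obtain
\[
\Big(\fint_{B_{2r}} |\nabla \phi|^{\frac{2q}{q+1}}\Big)^{\frac{q+1}{2q}} \le \Big(\fint_{B_{2r}} \lambda^{-q}\Big)^{\frac{1}{2q}} \Big(\fint_{B_{2r}} |\nabla \phi|_a^2\Big)^{\frac 12} \le \bar K^{\frac 12} \Big(\fint_{B_{2r}} |\nabla\phi|_a^2\Big)^{\frac 12}.
\]
The remaining $a$-weighted gradient is then controlled by the degenerate Caccioppoli estimate from \cite[Lemma~3]{BFO18}, applied on the pair $B_{2r} \subset B_{8r}$ to the $a$-harmonic function $\phi_i + e_i \cdot x$; after subtracting the $r$-linear contribution of $e_i \cdot x$ and using $(\fint_{B_{8r}}\mu^p)^{1/p} \le \bar K$, this yields a bound of the form $\bar K^{1/2}\bigl( 1 + r^{-1}(\fint_{B_{8r}} |\phi - \fint_{B_{8r}}\phi|^{2p/(p-1)})^{(p-1)/(2p)}\bigr)$, which feeds into the previous display to produce exactly the $\phi$-oscillation piece of \eqref{coravg}.

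For the $\sigma$-tail the situation is more delicate, since $\sigma$ solves not an $a$-harmonic equation but the constant-coefficient Poisson system $-\Delta \sigma_{ijk} = \partial_j q_{ik} - \partial_k q_{ij}$. I would combine a Calder\'on-Zygmund / Caccioppoli argument for the Laplacian with Sobolev embedding (which is admissible precisely because $\frac 1p + \frac 1q \le \frac 2d$) to obtain the localized estimate
\[
\Big(\fint_{B_{2r}} |\nabla\sigma|^{\frac{2p}{p+1}}\Big)^{\frac{p+1}{2p}} \lesssim \Big(\fint_{B_{8r}} |q|^{\frac{2p}{p+1}}\Big)^{\frac{p+1}{2p}} + \frac 1r \Big(\fint_{B_{8r}} \big|\sigma - \textstyle\fint_{B_{8r}}\sigma\big|^{\frac{2q}{q-1}}\Big)^{\frac{q-1}{2q}}.
\]
The pointwise identity $|q|_{a^{-1}}^2 = |e+\nabla\phi|_a^2$ together with $|q|^{2p/(p+1)} \le \mu^{p/(p+1)}|e+\nabla\phi|_a^{2p/(p+1)}$ and a H\"older step analogous to the $\phi$-case bound the first term on the right by $\bar K^{1/2}(\fint_{B_{8r}} |e+\nabla\phi|_a^2)^{1/2}$, which is again controlled by Caccioppoli and produces the $\phi$-oscillation contribution on $B_{8r}$; the second term is directly the $\sigma$-oscillation piece. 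The main technical obstacle is exactly this $\sigma$-estimate: the localized $L^{2p/(p+1)}$ bound for $\nabla\sigma$ whose error term lives in the specific $L^{2q/(q-1)}$ norm of $\sigma$ is nonstandard, and its viability rests on the same critical Sobolev relation $\frac 1p + \frac 1q \le \frac 2d$ that ensures $\theta < 1$ in Lemma~\ref{lemmagrowth}, so the two applications of that lemma fit together in the final additive estimate.
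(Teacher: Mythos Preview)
Your proposal is correct and follows essentially the same route as the paper: apply Lemma~\ref{lemmagrowth} to $\phi$ with $(s,S)=(\tfrac{2q}{q+1},\tfrac{2p}{p-1})$ and to $\sigma$ with $(s,S)=(\tfrac{2p}{p+1},\tfrac{2q}{q-1})$, then treat the $\phi$-tail by H\"older against $\lambda^{-q}$ followed by the degenerate Caccioppoli estimate, and the $\sigma$-tail by a localized Calder\'on--Zygmund estimate for the Laplacian on $-\Delta\sigma_{ijk}=\nabla\cdot\tilde q$, H\"older against $\mu^p$ on the flux term, and again Caccioppoli for $|\nabla\phi+e|_a$. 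One small clarification: the $\sigma$-estimate you flag as the ``main technical obstacle'' is in fact standard---after introducing a cut-off $\eta$ for $B_{2r}$ in $B_{4r}$, the Calder\'on--Zygmund bound for $\Delta(\eta\sigma_{ijk})$ produces the $\sigma$-error term in the \emph{same} exponent $L^{2p/(p+1)}$ as $\nabla\sigma$, and one then upgrades to $L^{2q/(q-1)}$ by Jensen on a bounded ball (no Sobolev relation needed for that step; the condition $\tfrac1p+\tfrac1q\le\tfrac2d$ enters only through Lemma~\ref{lemmagrowth}).
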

\begin{proof}
We start with $\sigma$. Using Lemma \ref{lemmagrowth} with $u=\sigma$, $S=\frac{2q}{q-1}$, and $s=\frac{2p}{p+1}$, we get that
	\begin{align}
	\label{avg04}
	&\frac{1}{r} \biggl( \fint_{B_r} \Big|\sigma - \fint_{B_r} \sigma \Big|^{\frac{2q}{q-1}} \biggr)^{\frac{q-1}{2q}} \\ 
	&\qquad \le C(d) \biggl( \fint_{B_r} \Big|\fint_{B_L(x)} \nabla \sigma \Big|^{\frac{2p}{p+1}} \d x \biggr)^{\frac{p+1}{2p}} + C(d) \biggl( \frac{L}{r} \biggr)^{1-\theta} \biggl( \fint_{B_{2r}} |\nabla \sigma|^{\frac{2p}{p+1}} \biggr)^{\frac{p+1}{2p}}, \nonumber
	\end{align}    
	with $\theta = \frac d2 (\frac 1p + \frac 1q) < 1$. To estimate the second term on the right-hand side, we assume w.l.o.g.\ that $\fint_{B_{4r}} \sigma = 0$ and consider $\eta \sigma$ with a smooth cut-off function $\eta$ for $B_{2r}$ in $B_{4r}$, which then by $-\Delta \sigma_{ijk} = \partial_j q_{ik} - \partial_k q_{ij} \eqcol \nabla \cdot \tilde q$ with $\tilde q = q_{ik}e_j - q_{ij}e_k$ satisfies
	\begin{align*}
	\Delta (\eta \sigma_{ijk}) = \nabla \cdot ( 2\sigma_{ijk} \nabla \eta - \eta \tilde q) + \nabla \eta \cdot \tilde q - \sigma_{ijk} \Delta \eta. 
	\end{align*}
	By a Calder\'on--Zygmund estimate for the Laplacian (see, e.g., \cite[Subsections 7.1.2--7.1.3]{GM12}) combined with Sobolev's and Jensen's inequalities, this implies 
	\begin{align*}
	\biggl( \int_{B_{2r}} |\nabla \sigma_{ijk}|^{\frac{2p}{p+1}} \biggr)^{\frac{p+1}{2p}} &\le
	\biggl( \int_{\R^d} |\nabla(\eta \sigma_{ijk})|^{\frac{2p}{p+1}} \biggr)^{\frac{p+1}{2p}} 
	\lesssim 
	\frac 1r \biggl( \int_{B_{4r}} |\sigma_{ijk}|^{\frac{2p}{p+1}} \biggr)^{\frac{p+1}{2p}} + 
	\biggl( \int_{B_{4r}} |\tilde q|^{\frac{2p}{p+1}} \biggr)^{\frac{p+1}{2p}}.
	\end{align*}
	Using the definition of $\tilde q$ via $q_i=a(\nabla \phi_i + e_i)$ we see that by H\"older's inequality
	\begin{align*}
	\biggl( \fint_{B_{4r}} |\tilde q|^{\frac{2p}{p+1}} \biggr)^{\frac{p+1}{2p}} \le
	\biggl( \fint_{B_{4r}} |a|^p \biggr)^{\frac{1}{2p}} 
	\biggl( \fint_{B_{4r}} |\nabla \phi_i + e_i|_a^2 \biggr)^{\frac{1}{2}},
	\end{align*}
	which combined with the previous inequality (after taking averages on both sides) yields
	\begin{equation*}
	\biggl( \fint_{B_{2r}} |\nabla \sigma_{ijk}|^{\frac{2p}{p+1}} \biggr)^{\frac{p+1}{2p}} 
	\lesssim 
	\frac 1r \biggl( \fint_{B_{4r}} |\sigma_{ijk}|^{\frac{2p}{p+1}} \biggr)^{\frac{p+1}{2p}} + 
	\bar K^{\frac{1}{2}} \biggl( \fint_{B_{4r}} |\nabla \phi_i + e_i|_a^2 \biggr)^{\frac{1}{2}}.
	\end{equation*}
	Note that the same arguments apply if we replace $\sigma$ by $\sigma - \fint_{B_{4r}} \sigma$. We then plug the previous estimate into \eqref{avg04} and use Jensen's inequality to obtain 
	\begin{align}
	\label{avg04+}
	&\frac{1}{r} \biggl( \fint_{B_r} \Big|\sigma - \fint_{B_r} \sigma \Big|^{\frac{2q}{q-1}} \biggr)^{\frac{q-1}{2q}} \le C(d) \biggl( \fint_{B_r} \Big|\fint_{B_L(x)} \nabla \sigma \Big|^{\frac{2p}{p+1}} \d x \biggr)^{\frac{p+1}{2p}} \\
	&\quad + C(d) \biggl( \frac{L}{r} \biggr)^{1-\theta} \frac1r \biggl( \fint_{B_{4r}} \Big|\sigma - \fint_{B_{4r}} \sigma \Big|^{\frac{2q}{q-1}} \biggr)^{\frac{q-1}{2q}} +
	C(d) \biggl( \frac{L}{r} \biggr)^{1-\theta} \bar K^{\frac 12} \biggl( \fint_{B_{4r}} |\nabla \phi + e|_a^2 \biggr)^{\frac{1}{2}}. \nonumber
	\end{align}
	To control the last term on the right-hand side, we observe that testing $-\nabla \cdot a \big( \nabla (\phi - \fint_{B_{8r}} \phi) + e \big) = 0$ with $\eta^2 (\phi - \fint_{B_{8r}} \phi)$, where $\eta$ is a cut-off function for $B_{4r}$ in $B_{8r}$, entails 
	\begin{align}
	\label{avg05}
	\fint_{B_{8r}} \Big|\nabla \Big(\eta \big(\phi - \fint_{B_{8r}} \phi \big) \Big) \Big|_a^2 &\lesssim \fint_{B_{8r}} |e|_a^2 + \fint_{B_{8r}} \Big| \phi - \fint_{B_{8r}} \phi \Big|^2 |\nabla \eta|_a^2 \\
	&\lesssim \bar K \biggl( 1 + \frac{1}{r^2} \biggl( \fint_{B_{8r}} \Big|\phi - \fint_{B_{8r}} \phi \Big|^{\frac{2p}{p-1}} \biggr)^\frac{p-1}{p} \biggr), \nonumber
	\end{align}
	where we used H\"older's inequality together with $|\nabla \eta| \lesssim \frac 1r$. 
	
	Using Lemma \ref{lemmagrowth} with $u=\phi$, $S=\frac{2p}{p-1}$ and $s=\frac{2q}{q+1}$, we get that  
	\begin{align}
	\label{avg06}
	&\frac{1}{r} \biggl( \fint_{B_r} \Big|\phi - \fint_{B_r} \phi \Big|^{\frac{2p}{p-1}} \biggr)^{\frac{p-1}{2p}} \\ 
	&\qquad \le C(d) \biggl( \fint_{B_r} \Big|\fint_{B_L(x)} \nabla \phi \Big|^{\frac{2q}{q+1}} \d x \biggr)^{\frac{q+1}{2q}} + C(d) \biggl( \frac{L}{r} \biggr)^{1-\theta} \biggl( \fint_{B_{2r}} |\nabla \phi|^{\frac{2q}{q+1}} \biggr)^{\frac{q+1}{2q}}. \nonumber
	\end{align}    
	By means of H\"older's inequality, we see that 
	\[
	\bigg( \fint_{B_{2r}} \Big|\nabla \Big(\phi - \fint_{B_{8r}} \phi \Big) \Big|^{\frac{2q}{q+1}} \bigg)^{\frac{q+1}{2q}} \lesssim \bar K^{\frac 12} \bigg( \fint_{B_{8r}} \Big|\nabla \Big(\eta \big(\phi - \fint_{B_{8r}} \phi \big) \Big) \Big|_a^2 \bigg)^{\frac 12},
	\]
	which we use in~\eqref{avg06} to control the last integral. Finally, such modified~\eqref{avg06} together with \eqref{avg05} and \eqref{avg04+} yield \eqref{coravg}, which completes the proof.
\end{proof}

\subsection{Proof of Theorem \ref{theoremmoments}: Stretched exponential moments for $r_\ast$}

\begin{proof}[Proof of Theorem \ref{theoremmoments}]
\emph{Step~1. Control of the minimal radius $r_*$.} Let $p, q \in (1,\infty)$ be the integrability exponents of $a$ and $a^{-1}$ from Definition \ref{defadmissible}. %and define $s \coleq \frac{2q}{q+1} \in (1,2)$, $S \coleq \frac{2p}{p-1} \in (2, \infty)$, and $\theta \coleq d (\frac1s - \frac1S) = \frac{d}{2} \big( \frac{1}{p} + \frac{1}{q} \big) < 1$. 
We shall follow the lines of \cite{GNO20v2} to derive an estimate for the minimal radius $r_*$. 
To this end, we first assume that $M_0 r_e < r \leq r_*$ holds with some positive radius $r$ and the positive constant $M_0$ from Definition \ref{defminradius} which is specified in \eqref{eqdefm0} below. We introduce 
\[
X(r) \coleq \max \bigg\{ \frac 1r \Big( \fint_{B_r} \Big| \phi - \fint_{B_r} \phi \Big|^\frac{2p}{p-1} \Big)^\frac{p-1}{2p}, \ \frac 1r \Big( \fint_{B_r} \Big| \sigma - \fint_{B_r} \sigma \Big|^\frac{2q}{q-1} \Big)^\frac{q-1}{2q} \bigg\}. %\ \ Y(r, x) \coleq \fint_{B_r(x)} \nabla (\phi, \sigma).
\]
%and 
%\[
%r_*^\mathrm{pure} \coleq \inf \bigg\{ r \geq 1 \: \Big| \: \forall \, \rho > r : \max \bigg\{ \frac1{\rho} \Big( \fint_{B_\rho} \Big| \phi - \fint_{B_\rho} \phi \Big|^\frac{2p}{p-1} \Big)^\frac{p-1}{2p}, \frac1{\rho} \Big( \fint_{B_\rho} \Big| \sigma - \fint_{B_\rho} \sigma \Big|^\frac{2q}{q-1} \Big)^\frac{q-1}{2q} \bigg\} \leq \frac{1}{C_0} \bigg\}.
%\]
%The only difference between $r_*$ and $r_*^\mathrm{pure}$ is the lower bound in the respective definition, namely $r_* \geq r_e$ and $r_*^\mathrm{pure} \geq 1$. 
With another positive radius $r'$ subject to $r' < r$, we may employ Corollary \ref{corgrowth} to obtain 
\begin{align*}
%\label{eqXYesimate}
X(r) \lesssim \bigg( \Big( \frac{r'}{r} \Big)^{1-\theta} \big( 1 + X(8r) \big) + \Big( \fint_{B_r} \Big|\fint_{B_{r'}(x)} \nabla \phi \Big|^\frac{2p}{p+1} \Big)^\frac{p+1}{2p} + \Big( \fint_{B_r} \Big|\fint_{B_{r'}(x)} \nabla \sigma \Big|^\frac{2q}{q+1} \Big)^\frac{q+1}{2q} \bigg)
\end{align*}
with $\theta = \frac{d}{2} \big( \frac{1}{p} + \frac{1}{q} \big) < 1$. From the definition of $r_*$ and the constant $C_0$ in Definition \ref{defminradius}, we deduce that there exists some $\rho \geq r$ such that $X(\rho) \geq \frac{1}{C_0}$ while $X(\rho') \leq \frac{2}{C_0}$ for all $\rho' \geq \rho$. Note that we crucially employed the assumption $M_0 r_e < r_*$, which ensures that such a $\rho \geq r$ actually exists. We infer 
\[
X(\rho') \geq \Big( \frac{\rho}{\rho'} \Big)^{1 + d \max\big\{ \tfrac{p-1}{2p}, \tfrac{q-1}{2q} \big\}} X(\rho) \geq \Big(\frac{1}{2} \Big)^{1 + d \max\big\{ \tfrac{p-1}{2p}, \tfrac{q-1}{2q} \big\}} \frac{1}{C_0}, \qquad X(8\rho') \leq \frac{2}{C_0}
\]
for all $\rho' \in (\rho, 2\rho)$. The choice $r \coleq \rho'$ and $r' \coleq \rho'' \in (0, \rho')$ entails 
\[
\frac{1}{C} \leq \Big( \frac{\rho''}{\rho'} \Big)^{1-\theta} + \Big( \fint_{B_{\rho'}} \Big|\fint_{B_{\rho''}(x)} \nabla \phi \Big|^\frac{2p}{p+1} \Big)^\frac{p+1}{2p} + \Big( \fint_{B_{\rho'}} \Big|\fint_{B_{\rho''}(x)} \nabla \sigma \Big|^\frac{2q}{q+1} \Big)^\frac{q+1}{2q} 
\]
%where $C > 1$ is such that $X(\rho') \geq \frac1C$. 
with a positive constant $C(d, p, q, K, C_0)$ which is in particular independent of $M_0$. 
One can now absorb the first term on the right-hand side by setting $\rho'' \coleq \frac{\rho'}{M_0}$ with 
\begin{align}
\label{eqdefm0}
M_0 \coleq ( 2 C )^\frac{1}{1-\theta}
\end{align}
leading to 
\[
\frac1{2C} \leq \Big( \fint_{B_{\rho'}} \Big|\fint_{B_{\frac{\rho'}{M_0}}(x)} \nabla \phi \Big|^\frac{2p}{p+1} \Big)^\frac{p+1}{2p} + \Big( \fint_{B_{\rho'}} \Big|\fint_{B_{\frac{\rho'}{M_0}}(x)} \nabla \sigma \Big|^\frac{2q}{q+1} \Big)^\frac{q+1}{2q}.
\]
Taking the power $2P$, $P \in \mathbb N$, applying Jensen's inequality, and integrating over $(\rho, 2\rho)$, we find 
\[
\frac1{4(4C)^{2P}} \leq \int_{\rho}^{2\rho} \fint_{B_{\rho'}} \Big|\fint_{B_{\frac{\rho'}{M_0}}(x)} \nabla (\phi, \sigma) \Big|^{2P} \frac{1}{\rho'}.
\]
Since the previous calculation holds for any configuration satisfying $M_0 r_e < r \leq r_*$, we arrive at 
\begin{align}
\label{eqminradestimate}
\langle I(M_0 r_e < r \leq r_*) \rangle \leq C^P \int_\frac{r}{M_0}^\infty \Big\langle \Big| \fint_{B_\rho} \nabla (\phi, \sigma) \Big|^{2P} \Big\rangle \frac{1}{\rho}
\end{align}
extending the range of integration, renaming variables, and using the stationarity of $\fint_{B_\rho(x)}\!\nabla (\phi, \sigma)$.

\emph{Step~2. Control of the corrector gradient $\nabla (\phi,\sigma)$.} 
We keep the assumptions and the notation from the previous step and consider some $\rho > \tfrac{r}{M_0}$. As a result of the vanishing expectation of $\nabla (\phi,\sigma)$, the spectral gap estimate \eqref{eq:pspectralgap} ensures the bound 
\[
\Big\langle \Big| \fint_{B_\rho} \nabla (\phi,\sigma) \Big|^{2P} \Big\rangle^\frac1P \leq \frac{CP^2}{\kappa} \bigg\langle \Big( \sum_D \Big( \int_D \Big| \frac{\partial}{\partial a} \fint_{B_\rho} \nabla (\phi,\sigma) \Big|_a \Big)^2 \Big)^P \bigg\rangle^\frac1P.
\]
Recalling the notation $F\nabla (\phi,\sigma) = \int_{\mathbb R^d} g \nabla (\phi,\sigma) = \fint_{B_\rho} \nabla (\phi,\sigma)$ with $g \coleq |B_\rho|^{-1} \mathbb 1_{B_\rho}$, which in particular fulfills the condition $\big( \fint_{B_\rho} |g|^\frac{2p}{p-1} \big)^\frac{p-1}{2p} %\lesssim \rho^{-d} \big( \fint_{B_\rho} |a^{-1}| \big)^\frac12 
\lesssim \rho^{-d}$, Proposition \ref{propsensitivity} guarantees 
\[
\Big\langle \Big| \fint_{B_\rho} \nabla (\phi,\sigma) \Big|^{2P} \Big\rangle^\frac1P \lesssim P^2 \Big\langle \Big( \frac{(\rho + r_*)^{1-\varepsilon(1-\beta)}}{\rho} \Big)^{dP} \Big\rangle^\frac1P. 
\]
Here and in the remainder of this proof, all generic constants may depend on $d$, $p$, $q$, $\beta$, $\varepsilon$, $\kappa$, $K$, and $C_0$. Together with the estimate on the minimal radius in \eqref{eqminradestimate}, we derive 
\begin{align*}
\big\langle I(M_0 r_e < r \leq r_*) \big\rangle^\frac1P &\lesssim \bigg( \int_\frac{r}{M_0}^\infty \Big\langle \Big| \fint_{B_\rho} \nabla (\phi,\sigma) \Big|^{2P} \Big\rangle \frac{1}{\rho} \bigg)^\frac1P \\ 
&\lesssim P^2 \bigg( \int_\frac{r}{M_0}^\infty \Big\langle \Big( \frac{(\rho + r_*)^{1-\varepsilon(1-\beta)}}{\rho} \Big)^{dP} \Big\rangle \frac{1}{\rho} \bigg)^\frac1P.
\end{align*}
An evaluation of the integral over $\rho$ after using the triangle inequality gives rise to 
\begin{align*}
\big\langle I(M_0 r_e < r \leq r_*) \big\rangle^\frac1P &\lesssim P^2 \bigg( \Big( \frac{1}{\varepsilon d P (1-\beta)} \Big( \frac{r}{M_0} \Big)^{-\varepsilon d P (1-\beta)} \Big)^\frac1P \\ 
&\qquad + \Big( \frac{1}{dP} \Big( \frac{r}{M_0} \Big)^{-dP} \Big)^\frac1P \big\langle r_*^{dP(1-\varepsilon(1-\beta))} \big\rangle^\frac1P \bigg).
\end{align*}
Keeping track only of the dependence of the constants on $P$, this expression simplifies to 
\begin{align*}
\big\langle I(M_0 r_e < r \leq r_*) \big\rangle^\frac1P \lesssim P^2 \Big( \frac{1}{r^{\varepsilon d (1-\beta)}} + \frac{1}{r^d} \big\langle r_*^{dP(1-\varepsilon(1-\beta))} \big\rangle^\frac1P \Big).
\end{align*} 
To derive a bound on $\langle I(r_* \geq r) \rangle$, we first note that 
\[
\langle I(r_* \geq r) \rangle^\frac1P \leq \langle I(M_0 r_e \geq r) \rangle^\frac1P + \langle I(M_0 r_e < r \leq r_*) \rangle^\frac1P
\]
by elementary arguments, where $\langle I(M_0 r_e \geq r) \rangle^\frac1P \lesssim \exp \big(\!-\frac{1}{CP} \big( \frac{r}{M_0} \big)^{\varepsilon \frac{d}{2} (1-\beta)} \big)$ due to \eqref{eq:strexpmom_re} with $\alpha = \frac{\varepsilon}{1 - \varepsilon}$. As a consequence of the scalar inequality $e^{-x} \leq x^{-2}$ for any $x > 0$, we obtain 
\[
\exp \Big(\!-\frac{1}{CP} \Big( \frac{r}{M_0} \Big)^{\varepsilon \frac{d}{2} (1-\beta)} \Big) \leq \frac{C^2 P^2 M_0^{\varepsilon d (1-\beta)}}{r^{\varepsilon d (1-\beta)}}. 
\]
Hence, we established the estimate 
\begin{align}
\label{eq:minradbuckling_r}
\langle I(r_* \geq r) \rangle^\frac1P \lesssim P^2 \Big( \frac{1}{r^{\varepsilon d (1-\beta)}} + \frac{1}{r^d} \Big\langle r_*^{dP(1-\varepsilon (1-\beta))} \Big\rangle^\frac1P \Big).
\end{align}

\emph{Step~3. Buckling and exponential moments of $r_*$.} We introduce $s \coleq r^{\frac\varepsilon2 d (1-\beta)}$, $s_* \coleq r_*^{\frac\varepsilon2 d (1-\beta)}$, $Q \coleq \frac{1}{\varepsilon(1-\beta)} > 1$, and we replace $P \geq 1$ by $\tfrac P2$ (now with $P \geq 2$) in \eqref{eq:minradbuckling_r}. 
The transformed estimate then reads 
\begin{align}
\label{eq:buckling_start}
\langle I(s_* \geq s) \rangle^\frac1P \leq \wt C P \Big( \frac{1}{s} + \frac{1}{s^Q} \big\langle (s_*^{Q-1})^P \big\rangle^\frac1P \Big)
\end{align}
with some constant $\wt C \geq 1$. The basic idea of buckling in this context is to assume that there exists a constant $\Lambda \geq 1$ such that 
\begin{align}
\label{eq:bucklingassump}
\big\langle I(s_* \geq s) \big\rangle \leq \exp \Big(\!-\frac{s}{\Lambda} \Big)
\end{align}
holds true for all $s \geq \Lambda$. This assumption is in general only satisfied for %an approximating sequence $\varphi_\delta(s) \rightarrow \big\langle I(s_* \geq s) \big\rangle$, $\delta \rightarrow 0$. 
$\min \{ s_*, \delta^{-1} \}$ with $\delta > 0$ instead of $s_*$. Showing that \eqref{eq:bucklingassump} (with $\min \{ s_*, \delta^{-1} \}$) entails $\Lambda \leq C$ for some constant $C \geq 1$, which is in particular independent of $\delta$, allows to transfer the uniform exponential decay to $\big\langle I(s_* \geq s) \big\rangle$.

Imposing \eqref{eq:bucklingassump}, we calculate for any $P \geq 2$, 
\begin{align*}
\langle s_*^P \rangle &\leq \int_0^\infty \big\langle I(s_* \geq s) \big\rangle \frac{\d}{\d s} s^P \, \d s \leq \int_0^\infty \exp \Big( 1-\frac{s}{\Lambda} \Big) \frac{\d}{\d s} s^P \, \d s \\
&= e \int_0^\infty \frac1\Lambda \exp \Big(\!-\frac{s}{\Lambda} \Big) s^P \, \d s = e \int_0^\infty e^{-t} (\Lambda t)^P \, \d t = e \Lambda^P P!.
\end{align*}
We thus obtain $\langle s_*^P \rangle^\frac 1P \leq e P \Lambda$ for all $P \geq 2$ and infer from \eqref{eq:buckling_start} the estimate 
\[
\langle I(s_* \geq s) \rangle^\frac1P \leq \wt C P \Big( \frac 1s + \frac{1}{s^Q} \big(e P (Q-1) \Lambda \big)^{Q-1} \Big) \leq C_0 \frac P s \Big( 1 + \Big( \frac{P \Lambda}{s} \Big)^{Q-1} \Big)
\]
with another constant $C_0 \geq 1$. By defining $\Lambda' \coleq 2C_0\Lambda^\frac{Q-1}{Q} \geq 1$, we have 
\[
\langle I(s_* \geq s) \rangle^\frac1P \leq C_0 \frac P s \bigg( \frac{\Lambda'}{2C_0} + \bigg( \frac{\Lambda}{2C_0 \Lambda^\frac{Q-1}{Q}} \bigg)^{Q-1} \bigg) \leq \frac P s \Lambda'
\]
provided $s \geq P \Lambda'$. The best upper bound for $\langle I(s_* \geq s) \rangle \leq \big( \frac P s \Lambda' \big)^P$ can be found by optimizing the right-hand side in $P$ leading to $P = \tfrac{s}{e\Lambda'}$. In order to guarantee $P \geq 2$ (due to the variable transformation at the beginning of this step), we have to demand $s \geq 2 e \Lambda'$. Note that $s \geq P\Lambda'$ is trivially satisfied. Consequently, 
\[
\langle I(s_* \geq s) \rangle \leq e^{-P} \leq \exp \Big(\!-\frac{s}{2 e \Lambda'} \Big)
\]
for all $s \geq 2 e \Lambda'$. This shows that \eqref{eq:bucklingassump} also holds true with $2 e \Lambda'$ instead of $\Lambda$. Taking the best constant $\Lambda$ in \eqref{eq:bucklingassump} ensures $\Lambda \leq 2 e \Lambda' = 4 C_0 e \Lambda^\frac{Q-1}{Q}$ and, finally, $\Lambda \leq (4 C_0 e)^Q$. 

Moments of $s_*$ of order $k \in \mathbb N$ are now derived as above via 
\begin{align*}
\langle s_*^k \rangle &\leq \int_0^\infty \langle I(s_* \geq s) \rangle \frac{\d}{\d s} s^k \, \d s \leq \int_0^\infty \exp\Big(\!-\frac{s}{\Lambda}\Big) \frac{\d}{\d s} s^k \, \d s \\ 
&= \int_0^\infty \frac1\Lambda \exp \Big(\!-\frac{s}{\Lambda} \Big) s^k \, \d s = \int_0^\infty e^{-t} (\Lambda t)^k \, \d t = \Lambda^k k!.
\end{align*}
As an immediate consequence, we obtain stretched exponential moments of $r_*$ in the sense that 
\[
\Big\langle \exp \Big( \frac1C r_*^{\frac{\varepsilon}{2} d (1-\beta)} \Big) \Big\rangle = 1 + \sum_{k=1}^\infty \frac1{k!} \frac{\langle s_*^k \rangle}{C^k} \leq 1 + \sum_{k=1}^\infty \frac{\Lambda^k}{C^k} < 2
\]
for a sufficiently large constant $C > 0$.
\end{proof}

\section{Two applications to stochastic homogenization}
\subsection{Proof of Theorem \ref{theoremcorrectors}: Decay and growth properties of the extended corrector}

\begin{proof}[Proof of Theorem \ref{theoremcorrectors}]
We define the random variable 
\[
F(x) \coleq \fint_{B_r} \nabla (\phi, \sigma)(x+y) \cdot m(y) \, dy
\]
which is stationary and satisfying $\langle F \rangle = 0$ thanks to the according properties of $\nabla (\phi, \sigma)$. From the $P$-spectral gap inequality \eqref{eq:pspectralgap} and the sensitivity estimate \eqref{eq:sensitivity}, we thus derive 
\[
\big\langle |F|^{2P} \big\rangle^\frac{1}{2P} \lesssim P \bigg\langle \Big( \sum_D \Big( \int_D \Big| \frac{\partial F}{\partial a} \Big|_a \Big)^2 \Big)^P \bigg\rangle^\frac{1}{2P} \lesssim P \Big\langle \Big( \frac{(r+r_*)^{1-\varepsilon(1-\beta)}}{r} \Big)^{Pd} \Big\rangle^\frac{1}{2P} 
\]
for any $P \in \mathbb N$, $P \geq 2$. Taking $r \geq 1$ and $r_* \geq 1$ into account, this simplifies to 
\begin{align}
\label{eq:2pspectralgap}
\big\langle |F|^{2P} \big\rangle^\frac{1}{2P} \lesssim P r^{-\frac\varepsilon2 d (1-\beta)} \Big\langle r_*^{(1-\varepsilon(1-\beta)) Pd} \Big\rangle^\frac{1}{2P}.
\end{align}
The first part of the theorem is a consequence of establishing stretched exponential moments %of order $\gamma$ 
for 
\[
\mathcal C(x) \coleq r^{\frac \varepsilon 2 d (1 - \beta)} \big| F(x) \big|. 
\]
To this end, we choose $Q \geq 1$ and let $P \in \mathbb N$ be the integer such that $Q \leq P < Q + 1$. Estimate \eqref{eq:2pspectralgap} then entails 
\[
\big\langle |\mathcal C|^{2Q} \big\rangle^\frac{1}{2Q} \lesssim Q \Big\langle r_*^{(1-\varepsilon(1-\beta))(Q+1)d} \Big\rangle^\frac{1}{2(Q+1)}.
\]
Replacing $Q$ by $\tfrac{\gamma Q}{2}$ for some $\gamma > 0$ and demanding $Q \geq \tfrac 4 \gamma$ to guarantee $\tfrac{\gamma Q}{2} \geq 2$, we obtain 
\[
\big\langle |\mathcal C|^{\gamma Q} \big\rangle^\frac{1}{Q} \lesssim Q^\gamma \Big\langle r_*^{(1-\varepsilon(1-\beta))(\frac{\gamma Q}{2}+1)d} \Big\rangle^\frac{\gamma}{2(\frac{\gamma Q}{2} + 1)} \leq Q^\gamma \Big\langle r_*^{(1-\varepsilon(1-\beta))\gamma Q d} \Big\rangle^\frac{1}{2Q}.
\]
Together with Theorem \ref{theoremmoments} and Lemma \ref{lemma:expmom_polymom}, we derive the bound 
\[
\Big\langle r_*^{(1-\varepsilon(1-\beta))\gamma Q d} \Big\rangle^\frac{1}{2Q} \lesssim Q^\frac{\gamma (1 - \varepsilon (1 - \beta))}{\varepsilon (1 - \beta)},
\]
which results in 
\[
\big\langle |\mathcal C|^{\gamma Q} \big\rangle^\frac{1}{Q} \lesssim Q^{\gamma \big( 1 + \frac{(1 - \varepsilon (1 - \beta))}{\varepsilon (1 - \beta)} \big)} = Q^\frac{\gamma}{\varepsilon (1 - \beta)} = Q
\]
for $\gamma \coleq \varepsilon (1 - \beta)$. Applying again Lemma \ref{lemma:expmom_polymom} concludes the argument. 

Concerning the growth of the correctors $(\phi, \sigma)$, we start by employing Sobolev's inequality related to the embedding $W^{1,\frac{2q}{q+1}} \big( B(x) \big) \hookrightarrow L^\frac{2p}{p-1} \big( B(x) \big)$ on the unit ball $B(x) \coleq B_1(x)$, which holds by the condition $\frac{1}{p} + \frac{1}{q} \leq \frac{2}{d}$. We infer  
\[
\Big( \fint_{B(x)} | \phi |^\frac{2p}{p-1} \Big)^\frac{p-1}{2p} \lesssim \Big( \fint_{B(x)} | \phi |^\frac{2q}{q+1} \Big)^\frac{q+1}{2q} + \Big( \fint_{B(x)} | \nabla \phi |^\frac{2q}{q+1} \Big)^\frac{q+1}{2q}.
\]
Poincar\'e's inequality and an elementary estimate involving the ellipticity radius $r_e(x)$ give rise to 
\[
\Big( \fint_{B(x)} | \phi |^\frac{2p}{p-1} \Big)^\frac{p-1}{2p} \lesssim \Big| \fint_{B(x)} \phi \Big| + r_e(x)^{d \frac{q+1}{2q}} \Big( \fint_{B_{r_e(x)}(x)} | \nabla \phi |^\frac{2q}{q+1} \Big)^\frac{q+1}{2q}.
\]
H\"older's inequality followed by a hole-filling argument then yields 
\[
\Big( \fint_{B(x)} | \phi |^\frac{2p}{p-1} \Big)^\frac{p-1}{2p} \lesssim \Big| \fint_{B(x)} \phi \Big| + r_e(x)^{d \frac{q+1}{2q}} \Big( \frac{r_*(x)}{r_e(x)} \Big)^{\frac d2 (1 - \varepsilon)} \Big( \fint_{B_{r_*(x)}(x)} |\nabla \phi|_a^2 \Big)^\frac12. 
\]
Owing to a Caccioppoli estimate of the last term above and the definition of the minimal radius $r_*(x)$ from Definition \ref{defminradius}, we deduce 
\begin{align*}
\Big( \fint_{B(x)} | \phi |^\frac{2p}{p-1} \Big)^\frac{p-1}{2p} &\lesssim \Big| \fint_{B(x)} \phi \Big| + r_e(x)^{d \frac{q+1}{2q}} \Big( \frac{r_*(x)}{r_e(x)} \Big)^{\frac d2 (1 - \varepsilon)} \Big( \fint_{B_{r_*(x)}(x)} \Big|\phi - \fint_{B_{r_*(x)}(x)} \phi \Big|^\frac{2p}{p-1} \Big)^\frac{p-1}{2p} \\ 
&\lesssim \Big| \fint_{B(x)} \phi \Big| + r_*(x)^{\frac d2 (1 - \varepsilon)} r_e(x)^{\frac d2 ( \frac{1}{q} + \varepsilon )}.
\end{align*}
As in \cite{GNO20v3}, we proceed by calculating 
\[
\fint_{B_t(x)} \big( \phi(y) - \phi(x) \big) \, dy = \int_0^t \fint_{B_r(x)} \nabla \phi (y) \cdot \frac{y - x}{r} \, dy \, dr
\]
for any $t \geq 1$. This identity easily follows from the integral mean value theorem. Hence, 
\[
\partial_t \fint_{B_t(x)} \phi = \fint_{B_t(x)} \nabla \phi (y) \cdot \frac{y - x}{t} \, dy.
\]
Moreover, we have $\fint_{B_t(x)} \big| \frac{y-x}{t} \big|^2 \, dy \simeq 1$, which allows us to employ the first part of this theorem ensuring that 
\[
\Big| \partial_t \fint_{B_t(x)} \phi \Big| \leq \mathcal C(x) t^{-\frac \varepsilon 2 d (1 - \beta)}
\]
and, consequently, 
\[
\Big| \fint_{B_R(x)} \phi - \fint_{B(x)} \phi \Big| \leq \int_1^R \Big| \partial_t \fint_{B_t(x)} \phi \Big| \leq \mathcal C(x) \int_1^R t^{-\frac \varepsilon 2 d (1 - \beta)} \lesssim \mathcal C(x) \pi(R)
\]
recalling the definition of $\pi(r)$ from \eqref{eq:defpi}. In a similar fashion, we find that 
\begin{align*}
\fint_{B_R(x)} \phi - \fint_{B_R} \phi &= \fint_{B_R} \big( \phi(x+y) - \phi(y) \big) \, dy \\ 
&= \fint_{B_R} \int_0^1 \nabla \phi (y + tx) \cdot x \, dt \, dy = |x| \int_0^1 \fint_{B_R} \nabla \phi (y + tx) \cdot \frac{x}{|x|} \, dt \, dy.
\end{align*}
Since $\fint_{B_R} \big| \frac{x}{|x|} \big|^2 = 1$, we conclude as above that 
\[
\Big| \fint_{B_R(x)} \phi - \fint_{B_R} \phi \Big| \leq R^{-\frac \varepsilon 2 d (1 - \beta)} |x| \int_0^1 \mathcal C(tx) \, dt.
\]
We may now apply the previous estimates to the right-hand side of the following inequality, 
\[
\Big| \fint_{B(x)} \phi \Big| \leq \Big| \fint_{B} \phi \Big| + \Big| \fint_{B} \phi - \fint_{B_R} \phi \Big| + \Big| \fint_{B_R} \phi - \fint_{B_R(x)} \phi \Big| + \Big| \fint_{B_R(x)} \phi - \fint_{B(x)} \phi \Big|, 
\]
leading to 
\[
\Big| \fint_{B(x)} \phi \Big| \leq \Big| \fint_{B} \phi \Big| + \Big( \mathcal C(0) + \mathcal C(x) + \int_0^1 \mathcal C(tx) \, dt \Big) \pi(|x|)
\]
when choosing $R \coleq |x|$. Together, we have 
\[
\Big( \fint_{B(x)} | \phi |^\frac{2p}{p-1} \Big)^\frac{p-1}{2p} \lesssim r_*(x)^\frac{d}{2} + r_e(x)^{\frac{d}{2} ( \frac{1}{\varepsilon q} + 1)} + \Big| \fint_{B} \phi \Big| + \Big( \mathcal C(0) + \mathcal C(x) + \int_0^1 \mathcal C(tx) \, dt \Big) \pi(|x|).
\]
Observe that $r_*(x)^\frac{d}{2}$ is stochastically integrable with stretched exponential moment $\varepsilon (1 - \beta)$ as $\big\langle \exp \big( \frac1C r_*^{\frac{\varepsilon}{2} d (1-\beta)} \big) \big\rangle < 2$ due to \eqref{eq:strexpmom_rast} and that $r_e(x)^{\frac{d}{2} ( \frac{1}{\varepsilon q} + 1)}$ is also integrable with the same moment $\varepsilon (1 - \beta)$ since $\big( \frac{1}{\varepsilon q} + 1 \big) \varepsilon \leq \frac{\alpha}{\alpha + 1}$ and $\big\langle \exp \big( \frac1C r_e^{\frac{\alpha}{\alpha + 1} \frac d2 (1-\beta)} \big) \big\rangle < 2$ due to \eqref{eq:strexpmom_re}. This closes the proof for the second part of the theorem as the same arguments also apply to $\sigma$ by basically exchanging $p$ and $q$.
\end{proof}

\subsection{Proof of Corollary \ref{cor:twoscale}: A quantitative two-scale expansion}
\begin{proof}[Proof of Corollary \ref{cor:twoscale}]
We follow the strategy presented in \cites{GNO20v3,GNO21} and first notice that by a scaling argument %the integrals on the left- and right-hand side of \eqref{eq:twoscaleerror} scale with $\delta^{-1}$ and $\delta^{-3}$, respectively. 
it is sufficient to prove the claim for $\delta = 1$. 
%\[
%\Big( \int \big| \nabla u_\delta - \big( u_{\mathrm{hom},\delta} + \delta \phi_i \big( \tfrac{\cdot}{\delta} \big) \partial_i u_{\mathrm{hom},\delta} \big) \big|_a^2 \Big)^\frac12 \leq \mathcal C_\delta (g) \Big( \int \pi(|x|)^2 |D^2 u_\mathrm{hom}|^2 \Big)^\frac12 \pi(\delta^{-1}) \delta
%\]
Next, we derive the following equation for $z$ where $(\cdot)_1$ denotes averaging over a ball of radius $1$: 
\begin{align}
\label{eq:twoscaleformula}
-\nabla \cdot a \nabla z = \nabla \cdot (g - g_1 + (a\phi_i - \sigma_i) \nabla \partial_i u_{\mathrm{hom},1}). 
\end{align}
The calculation is carried out in \cite{GNO21} but we recall the main steps of the proof for completeness. From the representation $z \coleq u - \big( u_{\mathrm{hom},1} + \phi_i \partial_i u_{\mathrm{hom},1} \big)$, we obtain 
\[
a \nabla z = a \nabla u - \partial_i u_{\mathrm{hom},1} a (\nabla \phi_i + e_i) - a \phi_i \nabla \partial_i u_{\mathrm{hom},1}
\]
and 
\[
-\nabla \cdot a \nabla z = \nabla \cdot (g - g_1) + \nabla \cdot (a \phi_i \nabla \partial_i u_{\mathrm{hom},1}) + \nabla \partial_i u_{\mathrm{hom},1} \cdot (\nabla \cdot \sigma_i).
\]
The claim is now a result of the skew-symmetry of $\sigma_i$, more precisely of 
\[
\nabla \partial_i u_{\mathrm{hom},1} \cdot (\nabla \cdot \sigma_i) = - \nabla \cdot (\sigma_i \nabla \partial_i u_{\mathrm{hom},1}).
\]
Testing \eqref{eq:twoscaleformula} with $z$ entails 
\begin{align*}
\int |\nabla z|_a^2 &\lesssim \int |g-g_1|_{a^{-1}}^2 + \int |\phi_i \nabla \partial_i u_{\mathrm{hom},1}|_a^2 + \int |\sigma_i \nabla \partial_i u_{\mathrm{hom},1}|_{a^{-1}}^2.
\end{align*}
For the first term on the right-hand side we aim to apply Poincar\'e's inequality noting that $g-g_1 \in W^{1,\frac{2q}{q-1}}(\mathbb R^d)$ is supported in $B_{R+1}$. Up to several constants depending in particular on $R$, we get 
\[
\int |g-g_1|_{a^{-1}}^2 \lesssim \Big( \int_{B_{R+1}} \big| a^{-1} \big|^q \Big)^\frac1q \Big( \int_{B_{R+1}} \big| \nabla g - \mathbb 1 \ast \nabla g \big|^\frac{2q}{q-1} \Big)^\frac{q-1}{q} \lesssim \Big( \int \big| \nabla g \big|^\frac{2q}{q-1} \Big)^\frac{q-1}{q}.
\]
% and that the average of $g-g_1$ over unit balls vanishes
For the second term, we note that $\nabla \partial_i u_{\mathrm{hom},1} = \frac{1}{|B|} \mathbb{1}_{B} \ast \nabla \partial_i u_\mathrm{hom}$ (with $B \coleq B_1(0)$) leading together with Jensen's inequality to 
\begin{align*} 
\int |\phi_i \nabla \partial_i u_{\mathrm{hom},1}|_a^2 \lesssim \int |a| |\phi_i|^2 \Big| \int_{B(x)} \nabla \partial_i u_{\mathrm{hom}} \Big|^2 \lesssim \int |a| |\phi|^2 \, \mathbb{1}_B \ast | \nabla^2 u_{\mathrm{hom}}|^2.
\end{align*}
Proceeding with H\"older's inequality, we deduce 
\begin{align*} 
\int |\phi_i \nabla \partial_i u_{\mathrm{hom},1}|_a^2 \lesssim \int \Big( \int_{B(x)} |a| |\phi|^2 \Big) | \nabla^2 u_{\mathrm{hom}}|^2 \leq \int \Big( \int_{B(x)} |a|^p \Big)^\frac1p \Big( \int_{B(x)} |\phi|^\frac{2p}{p-1} \Big)^\frac{p-1}{p} | \nabla^2 u_{\mathrm{hom}}|^2.
\end{align*}
Theorem \ref{theoremcorrectors} and $\fint_B \phi = 0$, therefore, result in 
\begin{align*} 
\int |\phi_i \nabla \partial_i u_{\mathrm{hom},1}|_a^2 \lesssim \int 4K r_e(x)^\frac{d}{p} \mathcal C(x)^2 \pi(|x|)^2 | \nabla^2 u_{\mathrm{hom}}|^2 \lesssim \mathcal C_{g,p}^2 \int \pi(|x|)^2 |\nabla g|^2, 
\end{align*}
where we introduced the random field 
\[
\mathcal C_{g,p}^2 \coleq \frac{\int \mathcal C(x)^2 r_e(x)^\frac{d}{p} \pi(|x|)^2 | \nabla^2 u_{\mathrm{hom}}|^2}{\int \pi(|x|)^2 |\nabla g|^2}.
\]
%\[
%\mathcal C_{\delta,g}(x) \coleq \frac{\Big( \int |\nabla g|^\frac{2q}{q-1} \Big)^\frac{q-1}{q} + \int \mathcal C(x)^2 \pi(|x|)^2 \big| \nabla \partial_i u_\mathrm{hom} \big|_a^2 + \int \mathcal C(x)^2 \pi(|x|)^2 \big| \nabla \partial_i u_\mathrm{hom} \big|_{a^{-1}}^2}{\Big( \int |\nabla g|^\frac{2q}{q-1} \Big)^\frac{q-1}{q} + \Big( \int \pi(|x|) |\nabla g|^\frac{2q}{q-1} \Big)^\frac{q-1}{q}},
%\]
Employing the stationarity of $\mathcal C(x)$ and $r_e(x)$ as well as a weighted $L^2$ estimate for $\nabla^2 u_\mathrm{hom}$ (recall that $\pi(\cdot)$ as defined in \eqref{eq:defpi} is a Muckenhoupt weight), we infer the bound  
\[
\big\langle \mathcal C_{g,p}^r \big\rangle \leq \bigg( \frac{\int \big\langle \mathcal C(x)^r r_e(x)^\frac{dr}{2p} \big\rangle^\frac2r \pi(|x|)^2 | \nabla^2 u_{\mathrm{hom}}|^2}{\int \pi(|x|)^2 |\nabla g|^2} \bigg)^\frac r2 \lesssim \Big\langle \mathcal C^r r_e^\frac{dr}{2p} \Big\rangle
\]
for any $r \geq 2$. Now let $r \geq \frac{2}{\varepsilon (1-\beta)} (1 + \frac{\alpha + 1}{\alpha} \frac{\varepsilon}{p})$ and observe that the previous estimate gives rise to 
\begin{align*}
	\Big\langle \mathcal C_{g,p}^{(1 + \frac{\alpha + 1}{\alpha} \frac{\varepsilon}{p})^{-1} \varepsilon (1-\beta) r} \Big\rangle &\lesssim \Big\langle \mathcal C^{(1 + \frac{\alpha + 1}{\alpha} \frac{\varepsilon}{p})^{-1} \varepsilon (1-\beta) r} r_e^{\frac{d}{2p}(1 + \frac{\alpha + 1}{\alpha} \frac{\varepsilon}{p})^{-1} \varepsilon (1-\beta) r} \Big\rangle \\ 
	&\leq \Big\langle \mathcal C^{\varepsilon (1-\beta) r} \Big\rangle^\frac{1}{1 + \frac{\alpha + 1}{\alpha} \frac{\varepsilon}{p}} \Big\langle r_e^{\frac{\alpha}{\alpha + 1} \frac{d}{2} (1-\beta) r} \Big\rangle^\frac{\frac{\alpha + 1}{\alpha} \frac{\varepsilon}{p}}{1 + \frac{\alpha + 1}{\alpha} \frac{\varepsilon}{p}} \lesssim (cr)^r
\end{align*}
using H\"older's inequality, the stochastic integrability of $\mathcal C$ and $r_e$ from Theorem \ref{theoremcorrectors} and Lemma \ref{lemma:re_moments}, respectively, and Lemma \ref{lemma:expmom_polymom}. This ensures the announced stretched exponential moment bounds for $\mathcal C_{g,p}$. The same arguments also show that 
\begin{align*} 
\int |\sigma_i \nabla \partial_i u_{\mathrm{hom},1}|_{a^{-1}}^2 \lesssim \mathcal C_{g,q}^2 \int \pi(|x|)^2 |\nabla g|^2 
\end{align*}
together with the random field 
\[
\mathcal C_{g,q}^2 \coleq \frac{\int \mathcal C(x)^2 r_e(x)^\frac{d}{q} \pi(|x|)^2 | \nabla^2 u_{\mathrm{hom}}|^2}{\int \pi(|x|)^2 |\nabla g|^2}
\]
allowing for the same stochastic integrability as $\mathcal C_{g,p}$ up to replacing $p$ by $q$. 
\end{proof}

\appendix
\section{Some auxiliary tools} 

\begin{lemma}[see e.g.\ {\cite[Lemma 6]{GNO20v3}}]
	\label{lemma:expmom_polymom}
	The following statements on a nonnegative random variable $F$ are equivalent.
	\begin{enumerate}
		\item There exists a constant $C \geq 1$ such that 
		\[
		\Big\langle \exp\Big( \frac1C F \Big) \Big\rangle < 2. 
		\]
		\item There exists some $p_0 \in \mathbb N$ and a constant $C \geq 1$ such that 
		\[
		\big\langle F^p \big\rangle^\frac1p \leq Cp
		\]
		for all $p \in \mathbb N$, $p \geq p_0$.
	\end{enumerate}
\end{lemma}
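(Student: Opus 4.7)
The plan is to move in each direction by Taylor-expanding the exponential and combining the resulting series with Stirling's formula. Both implications are routine, and the only care needed is in tracking how the constants depend on each other.

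For the direction (i) $\Rightarrow$ (ii), I would expand
\[
2 > \Big\langle \exp\Big(\tfrac1C F\Big) \Big\rangle = \sum_{k=0}^\infty \frac{\langle F^k \rangle}{C^k k!},
\]
which, since $F \geq 0$ so that every term is nonnegative, yields $\langle F^k \rangle \leq 2 C^k k!$ for every $k \in \mathbb N$. Taking $k$th roots and applying the elementary bound $k! \leq k^k$ gives $\langle F^k \rangle^{1/k} \leq 2^{1/k} C k$, and absorbing $2^{1/k} \leq 2$ produces the claimed polynomial bound with a larger constant $2C$ (valid for all $k \geq 1$, so any choice of $p_0$ works).

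For the direction (ii) $\Rightarrow$ (i), I would pick a constant $C'$ to be chosen and estimate
\[
\Big\langle \exp\Big(\tfrac{1}{C'} F\Big) \Big\rangle = \sum_{p=0}^{p_0-1} \frac{\langle F^p \rangle}{(C')^p p!} + \sum_{p=p_0}^\infty \frac{\langle F^p \rangle}{(C')^p p!}.
\]
For the tail $p \geq p_0$, I insert $\langle F^p \rangle \leq (Cp)^p$ and use Stirling in the form $p! \geq (p/e)^p$, so that
\[
\frac{\langle F^p \rangle}{(C')^p p!} \leq \Big(\frac{Ce}{C'}\Big)^p,
\]
which is a geometric series with ratio $< \tfrac12$ as soon as $C' \geq 2Ce$. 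For the finitely many terms with $p < p_0$, Jensen's inequality gives $\langle F^p \rangle \leq \langle F^{p_0} \rangle^{p/p_0}$, which is finite by assumption, so enlarging $C'$ further makes each of these terms smaller than any prescribed constant. Choosing $C'$ large enough that the sum of both contributions is strictly less than $2$ establishes (i).

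The only mildly delicate point is ensuring the finitely many low-$p$ terms do not spoil the bound, and this is handled by taking $C'$ sufficiently large depending on $p_0$, $C$, and $\langle F^{p_0} \rangle$; no other obstacle is anticipated.
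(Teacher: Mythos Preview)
Your argument is correct in both directions. The paper itself states this lemma without proof, treating it as a standard auxiliary fact, so there is no ``paper's own proof'' to compare against. Your approach---Taylor expansion together with the Stirling bound $p! \geq (p/e)^p$---is precisely the canonical one, and the handling of the low-order terms via $\langle F^p\rangle \le \langle F^{p_0}\rangle^{p/p_0}$ is clean.
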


\begin{notation}
\label{defmatrixnorms}
For a matrix $M \in \mathbb R^{d \times d}$, we write 
\[
|M|_{F1} \coleq \sum_{i=1}^d \sum_{j=1}^d |M_{ij}|, \qquad |M|_{F\infty} \coleq \sup_{i \leq d} \sup_{j \leq d} |M_{ij}|.
\]
\end{notation}

\begin{lemma}
\label{lemmamatrixduality}
Let $M : \mathbb R^d \rightarrow \mathbb R^{d \times d}$ and suppose that 
\[
\sum_D \Big( \int_D M : a^{\frac12} N a^{\frac12} \Big)^2 \leq c^2 \sup_D \| |N|_{F\infty} \|_{L^\infty(D)}^2
\]
for some $c>0$ and all bounded $N : \mathbb R^d \rightarrow \mathbb R^{d \times d}$. Then, we have 
\[
\sum_D \big\| \big|a^{\frac 12} M a^{\frac 12}\big|_{F1} \big\|_{L^1(D)}^2 \leq c^2.
\]
\end{lemma}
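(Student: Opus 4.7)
The proof is a clean duality argument, and the key identity uses the symmetry of $a$ (hence of $a^{\frac12}$). Via the cyclic invariance of the trace, I would first observe the pointwise rearrangement
\[
M : a^{\frac12} N a^{\frac12} = \operatorname{tr}\big(M^T a^{\frac12} N a^{\frac12}\big) = \operatorname{tr}\big((a^{\frac12} M a^{\frac12})^T N\big) = \big(a^{\frac12} M a^{\frac12}\big) : N,
\]
valid for any $N : \mathbb R^d \to \mathbb R^{d \times d}$. Setting $\wt M \coleq a^{\frac12} M a^{\frac12}$, the hypothesis then takes the cleaner form
\[
\sum_D \Big(\int_D \wt M : N \Big)^2 \leq c^2 \sup_D \big\| |N|_{F\infty} \big\|_{L^\infty(D)}^2 \qquad \text{for every bounded } N : \mathbb R^d \to \mathbb R^{d \times d}.
\]

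The second step is to exploit that $|\cdot|_{F1}$ and $|\cdot|_{F\infty}$ from Notation~\ref{defmatrixnorms} are $\ell^1$-$\ell^\infty$ dual with respect to the Frobenius pairing $A : B = \sum_{i,j} A_{ij} B_{ij}$, and that this duality is saturated by an entrywise sign choice. Accordingly, my plan is to plug the specific test field
\[
N(x) \coleq \big( \sign \wt M_{ij}(x) \big)_{1 \leq i,j \leq d}
\]
into the inequality above. This $N$ is measurable and satisfies both $|N(x)|_{F\infty} \leq 1$ pointwise and
\[
\wt M(x) : N(x) = \sum_{i,j} \big| \wt M_{ij}(x) \big| = \big| \wt M(x) \big|_{F1} \geq 0,
\]
so that $\int_D \wt M : N = \int_D |\wt M|_{F1} = \big\| |\wt M|_{F1} \big\|_{L^1(D)}$.

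Combining the two steps yields exactly the desired
\[
\sum_D \big\| |a^{\frac12} M a^{\frac12}|_{F1} \big\|_{L^1(D)}^2 \leq c^2.
\]
There is no real obstacle here: everything is pointwise in $x$, the partition $\{D\}$ enters only through the sum, and no randomness is involved. The only structural assumption used is the symmetry $a = a^T$ from Definition~\ref{defadmissible}, which is what makes the cyclic step produce the symmetric quantity $a^{\frac12} M a^{\frac12}$ that appears on the right-hand side of the conclusion.
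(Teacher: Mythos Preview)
Your proof is correct and is essentially the paper's argument, carried out a bit more explicitly. The paper first states the general duality identity $\sum_D \| |M|_{F1} \|_{L^1(D)}^2 = \sup_N \sum_D (\int_D M : N)^2 / \sup_D \| |N|_{F\infty} \|_{L^\infty(D)}^2$ (leaving its verification to ``elementary arguments from linear algebra'') and then substitutes $M \mapsto a^{\frac12} M a^{\frac12}$ together with the cyclic trace identity; you instead perform the cyclic step first and then plug in the explicit extremizer $N_{ij} = \sign(\wt M_{ij})$, which is precisely the test field realizing that supremum.
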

\begin{proof}
We first observe that 
\[
\sum_D \| |M|_{F1} \|_{L^1(D)}^2 = \sup_{\genfrac{}{}{0pt}{1}{N \in \mathbb R^{d \times d}}{N \text{bounded}}} \frac{\sum_D \big( \int_D M : N \big)^2}{\sup_D \| |N|_{F\infty} \|_{L^\infty(D)}^2}
\]
for all matrix-valued functions $M, N \in \mathbb R^{d \times d}$ provided that $N_{ij}(\cdot)$ is bounded for all $i$ and $j$. This can be verified by elementary arguments from linear algebra. The previous identity, in particular, implies that the bound $\sum_D \| |M|_{F1} \|_{L^1(D)}^2 \leq c^2$ holds, if 
\[
\sum_D \Big( \int_D M : N \Big)^2 \leq c^2 \sup_D \| |N|_{F\infty} \|_{L^\infty(D)}^2
\]
for some $c>0$ and all bounded $N : \mathbb R^d \rightarrow \mathbb R^{d \times d}$. Replacing $M$ by $a^{\frac 12} M a^{\frac 12}$ and observing that $a^{\frac 12} M a^{\frac 12} : N = M : a^{\frac 12} N a^{\frac 12}$ allows to conclude. 
\end{proof}

\section*{Statements and Declarations}
The authors were partially supported by the German Science Foundation DFG in context of the first author's Emmy Noether
Junior Research Group BE 5922/1-1. The authors have no relevant financial or non-financial interests to declare that are relevant to the content of this article. Data sharing is not applicable to this article as no datasets were generated or analyzed during the current study. The authors thank Adolfo Arroyo-Rabasa for his contributions to the project in its very first stage.

\begin{bibdiv}
	\begin{biblist}
		\bib{ACS21}{article}{
			AUTHOR = {Andres, Sebastian}, 
			AUTHOR = {Chiarini, Alberto}, 
			AUTHOR = {Slowik, Martin},
			TITLE = {Quenched local limit theorem for random walks among
				time-dependent ergodic degenerate weights},
			JOURNAL = {Probab. Theory Related Fields},
			FJOURNAL = {Probability Theory and Related Fields},
			VOLUME = {179},
			YEAR = {2021},
			NUMBER = {3-4},
			PAGES = {1145--1181},
			ISSN = {0178-8051},
			MRCLASS = {60K37 (60F17 82C41 82C43)},
			MRNUMBER = {4242632},
			%DOI = {10.1007/s00440-021-01028-6},
			%URL = {https://doi.org/10.1007/s00440-021-01028-6},
		}
		
		\bib{ADS15}{article}{
			AUTHOR = {Andres, Sebastian}, 
			AUTHOR = {Deuschel, Jean-Dominique}, 
			AUTHOR = {Slowik, Martin},
			TITLE = {Invariance principle for the random conductance model in a
				degenerate ergodic environment},
			JOURNAL = {Ann. Probab.},
			FJOURNAL = {The Annals of Probability},
			VOLUME = {43},
			YEAR = {2015},
			NUMBER = {4},
			PAGES = {1866--1891},
			ISSN = {0091-1798},
			MRCLASS = {60K37 (60F17 82C41)},
			MRNUMBER = {3353817},
			MRREVIEWER = {Oriane Blondel},
			%DOI = {10.1214/14-AOP921},
			%URL = {https://doi.org/10.1214/14-AOP921},
		}
		
		\bib{AD18}{article}{
			AUTHOR = {Armstrong, Scott}, 
			AUTHOR = {Dario, Paul},
			TITLE = {Elliptic regularity and quantitative homogenization on
				percolation clusters},
			JOURNAL = {Comm. Pure Appl. Math.},
			FJOURNAL = {Communications on Pure and Applied Mathematics},
			VOLUME = {71},
			YEAR = {2018},
			NUMBER = {9},
			PAGES = {1717--1849},
			ISSN = {0010-3640},
			MRCLASS = {35B27 (35J25 60K35 82B43)},
			MRNUMBER = {3847767},
			MRREVIEWER = {Denis I. Borisov},
			%DOI = {10.1002/cpa.21726},
			%URL = {https://doi.org/10.1002/cpa.21726},
		}
		
		\bib{AS16}{article}{
			AUTHOR = {Armstrong, Scott N.}, 
			AUTHOR = {Smart, Charles K.},
			TITLE = {Quantitative stochastic homogenization of convex integral
				functionals},
			JOURNAL = {Ann. Sci. \'{E}c. Norm. Sup\'{e}r. (4)},
			FJOURNAL = {Annales Scientifiques de l'\'{E}cole Normale Sup\'{e}rieure. Quatri\`eme
				S\'{e}rie},
			VOLUME = {49},
			YEAR = {2016},
			NUMBER = {2},
			PAGES = {423--481},
			ISSN = {0012-9593},
			MRCLASS = {49J45 (49K45 60H15 60H30)},
			MRNUMBER = {3481355},
			MRREVIEWER = {Ada Bottaro Aruffo},
			%DOI = {10.24033/asens.2287},
			%URL = {https://doi.org/10.24033/asens.2287},
		}
	
		\bib{AKM19}{book}{
			AUTHOR = {Armstrong, Scott},
			AUTHOR = {Kuusi, Tuomo},
			AUTHOR = {Mourrat, Jean-Christophe},
			TITLE = {Quantitative stochastic homogenization and large-scale regularity},
			SERIES = {Grundlehren der mathematischen Wissenschaften [Fundamental Principles of Mathematical Sciences]},
			VOLUME = {352},
			PUBLISHER = {Springer, Cham},
			YEAR = {2019},
			PAGES = {xxxviii+518},
			ISBN = {978-3-030-15544-5; 978-3-030-15545-2; 978-3-030-15547-6},
			MRCLASS = {35-02 (35B27 60F17 60H15)},
			MRNUMBER = {3932093},
			%DOI = {10.1007/978-3-030-15545-2},
			%URL = {https://doi.org/10.1007/978-3-030-15545-2},
		}
		
		\bib{AL87}{article}{
			AUTHOR = {Avellaneda, Marco}, 
			AUTHOR = {Lin, Fang-Hua},
			TITLE = {Compactness methods in the theory of homogenization},
			JOURNAL = {Comm. Pure Appl. Math.},
			FJOURNAL = {Communications on Pure and Applied Mathematics},
			VOLUME = {40},
			YEAR = {1987},
			NUMBER = {6},
			PAGES = {803--847},
			ISSN = {0010-3640},
			MRCLASS = {35B99 (35J55 35R05)},
			MRNUMBER = {910954},
			MRREVIEWER = {Pierre-Louis Lions},
			%DOI = {10.1002/cpa.3160400607},
			%URL = {https://doi.org/10.1002/cpa.3160400607},
		}
		
		\bib{Bar04}{article}{
			AUTHOR = {Barlow, Martin T.},
			TITLE = {Random walks on supercritical percolation clusters},
			JOURNAL = {Ann. Probab.},
			FJOURNAL = {The Annals of Probability},
			VOLUME = {32},
			YEAR = {2004},
			NUMBER = {4},
			PAGES = {3024--3084},
			ISSN = {0091-1798},
			MRCLASS = {60K37 (60K35)},
			MRNUMBER = {2094438},
			MRREVIEWER = {Nikita Y. Ratanov},
			%DOI = {10.1214/009117904000000748},
			%URL = {https://doi.org/10.1214/009117904000000748},
		}
		
		\bib{BCF19}{article}{
			AUTHOR = {Bella, Peter}, 
			AUTHOR = {Chiarini, Alberto}, 
			AUTHOR = {Fehrman, Benjamin},
			TITLE = {A {L}iouville theorem for stationary and ergodic ensembles of
				parabolic systems},
			JOURNAL = {Probab. Theory Related Fields},
			FJOURNAL = {Probability Theory and Related Fields},
			VOLUME = {173},
			YEAR = {2019},
			NUMBER = {3-4},
			PAGES = {759--812},
			ISSN = {0178-8051},
			MRCLASS = {35B27 (35B53 35K45 60H25 60K37)},
			MRNUMBER = {3936146},
			MRREVIEWER = {Adrian Muntean},
			%DOI = {10.1007/s00440-018-0843-z},
			%URL = {https://doi.org/10.1007/s00440-018-0843-z},
		}
		
		\bib{BFO18}{article}{
			author={Bella, Peter},
			author={Fehrman, Benjamin},
			author={Otto, Felix},
			title={A {L}iouville theorem for elliptic systems with degenerate
				ergodic coefficients},
			date={2018},
			ISSN={1050-5164},
			journal={Ann. Appl. Probab.},
			volume={28},
			number={3},
			pages={1379\ndash 1422},
			url={https://doi.org/10.1214/17-AAP1332},
		}
		
		\bib{BO16}{article}{
			AUTHOR = {Bella, Peter}, 
			AUTHOR = {Otto, Felix},
			TITLE = {Corrector estimates for elliptic systems with random periodic
				coefficients},
			JOURNAL = {Multiscale Model. Simul.},
			FJOURNAL = {Multiscale Modeling \& Simulation. A SIAM Interdisciplinary
				Journal},
			VOLUME = {14},
			YEAR = {2016},
			NUMBER = {4},
			PAGES = {1434--1462},
			ISSN = {1540-3459},
			MRCLASS = {35B27 (35J47 35R60 60H15 74Q05)},
			MRNUMBER = {3573304},
			%MRREVIEWER = {Taras A. Mel\cprime nyk},
			%DOI = {10.1137/15M1037147},
			%URL = {https://doi.org/10.1137/15M1037147},
		}
		
		\bib{BS21}{article}{
			AUTHOR = {Bella, Peter}, 
			AUTHOR = {Sch\"{a}ffner, Mathias},
			TITLE = {Local boundedness and {H}arnack inequality for solutions of
				linear nonuniformly elliptic equations},
			JOURNAL = {Comm. Pure Appl. Math.},
			FJOURNAL = {Communications on Pure and Applied Mathematics},
			VOLUME = {74},
			YEAR = {2021},
			NUMBER = {3},
			PAGES = {453--477},
			ISSN = {0010-3640},
			MRCLASS = {35J30},
			MRNUMBER = {4201290},
			MRREVIEWER = {Yongqiang Fu},
			%DOI = {10.1002/cpa.21876},
			%URL = {https://doi.org/10.1002/cpa.21876},
		}
	
		\bib{BS22}{article}{
			AUTHOR = {Bella, Peter},
			AUTHOR = {Sch\"{a}ffner, Mathias},
			TITLE = {Non-uniformly parabolic equations and applications to the
				random conductance model},
			JOURNAL = {Probab. Theory Related Fields},
			FJOURNAL = {Probability Theory and Related Fields},
			VOLUME = {182},
			YEAR = {2022},
			NUMBER = {1-2},
			PAGES = {353--397},
			ISSN = {0178-8051},
			MRCLASS = {60K37 (35B65 35K65 60F17)},
			MRNUMBER = {4367950},
			%DOI = {10.1007/s00440-021-01081-1},
			%URL = {https://doi.org/10.1007/s00440-021-01081-1},
		}
	
		\bib{BDCKY15}{article}{
			AUTHOR = {Benjamini, Itai}, 
			AUTHOR = {Duminil-Copin, Hugo}, 
			AUTHOR = {Kozma, Gady}, 
			AUTHOR = {Yadin, Ariel},
			TITLE = {Disorder, entropy and harmonic functions},
			JOURNAL = {Ann. Probab.},
			FJOURNAL = {The Annals of Probability},
			VOLUME = {43},
			YEAR = {2015},
			NUMBER = {5},
			PAGES = {2332--2373},
			ISSN = {0091-1798},
			MRCLASS = {60K37 (31A05 37A35 60B15 60J10 82B43)},
			MRNUMBER = {3395463},
			MRREVIEWER = {Daniel Boivin},
			%DOI = {10.1214/14-AOP934},
			%URL = {https://doi.org/10.1214/14-AOP934},
		}
		
		\bib{CD16}{article}{
			AUTHOR = {Chiarini, Alberto}, 
			AUTHOR = {Deuschel, Jean-Dominique},
			TITLE = {Invariance principle for symmetric diffusions in a degenerate
				and unbounded stationary and ergodic random medium},
			JOURNAL = {Ann. Inst. Henri Poincar\'{e} Probab. Stat.},
			FJOURNAL = {Annales de l'Institut Henri Poincar\'{e} Probabilit\'{e}s et
				Statistiques},
			VOLUME = {52},
			YEAR = {2016},
			NUMBER = {4},
			PAGES = {1535--1563},
			ISSN = {0246-0203},
			MRCLASS = {60K37 (60F17)},
			MRNUMBER = {3573286},
			%MRREVIEWER = {Wolfgang L\"{o}hr},
			%DOI = {10.1214/15-AIHP688},
			%URL = {https://doi.org/10.1214/15-AIHP688},
		}
		
%		\bib{CG21+}{article}{
%			author={Clozeau, Nicolas},
%			author={Gloria, Antoine},
%			title={Quantitative nonlinear homogenization: control of oscillations},
%			date={2021},
%			note={arXiv:2104.04263v3},
%		}
	
		\bib{DS16}{article}{
			AUTHOR = {Deuschel, Jean-Dominique}, 
			AUTHOR = {Slowik, Martin},
			TITLE = {Invariance principle for the one-dimensional dynamic random
				conductance model under moment conditions},
			BOOKTITLE = {Stochastic analysis on large scale interacting systems},
			SERIES = {RIMS K\^{o}ky\^{u}roku Bessatsu, B59},
			PAGES = {69--84},
			PUBLISHER = {Res. Inst. Math. Sci. (RIMS), Kyoto},
			YEAR = {2016},
			MRCLASS = {60K37 (60F17 82C41)},
			MRNUMBER = {3675925},
		}
		
		\bib{DG20}{article}{
			author={Duerinckx, Mitia},
			author={Gloria, Antoine},
			title={Multiscale functional inequalities in probability: concentration
				properties},
			date={2020},
			journal={ALEA Lat. Am. J. Probab. Math. Stat.},
			volume={17},
			number={1},
			pages={133\ndash 157},
			url={https://doi.org/10.30757/alea.v17-06},
		}
		
		\bib{FO16}{article}{
			AUTHOR = {Fischer, Julian}, 
			AUTHOR = {Otto, Felix},
			TITLE = {A higher-order large-scale regularity theory for random
				elliptic operators},
			JOURNAL = {Comm. Partial Differential Equations},
			FJOURNAL = {Communications in Partial Differential Equations},
			VOLUME = {41},
			YEAR = {2016},
			NUMBER = {7},
			PAGES = {1108--1148},
			ISSN = {0360-5302},
			MRCLASS = {35J15 (35B27 35B65 35J47 35R60 60H25)},
			MRNUMBER = {3528529},
			MRREVIEWER = {Carmen Calvo-Jurado},
			%DOI = {10.1080/03605302.2016.1179318},
			%URL = {https://doi.org/10.1080/03605302.2016.1179318},
		}
	
		\bib{GM12}{book}{
			AUTHOR = {Giaquinta, Mariano},
			AUTHOR = {Martinazzi, Luca},
			TITLE = {An introduction to the regularity theory for elliptic systems, harmonic maps and minimal graphs},
			SERIES = {Appunti. Scuola Normale Superiore di Pisa (Nuova Serie) [Lecture Notes. Scuola Normale Superiore di Pisa (New Series)]},
			VOLUME = {11},
			EDITION = {Second},
			PUBLISHER = {Edizioni della Normale, Pisa},
			YEAR = {2012},
			PAGES = {xiv+366},
			ISBN = {978-88-7642-442-7; 978-88-7642-443-4},
			MRCLASS = {35-02 (35B65 35J20 35J60 58E20)},
			MRNUMBER = {3099262},
			%DOI = {10.1007/978-88-7642-443-4},
			%URL = {https://doi.org/10.1007/978-88-7642-443-4},
		}
		
		\bib{GT01}{book}{
			AUTHOR = {Gilbarg, David},
			AUTHOR = {Trudinger, Neil S.},
			TITLE = {Elliptic partial differential equations of second order},
			SERIES = {Classics in Mathematics},
			NOTE = {Reprint of the 1998 edition},
			PUBLISHER = {Springer-Verlag, Berlin},
			YEAR = {2001},
			PAGES = {xiv+517},
			ISBN = {3-540-41160-7},
			MRCLASS = {35-02 (35Jxx)},
			MRNUMBER = {1814364},
		}
		
		\bib{GNO20v2}{misc}{
			author={Gloria, Antoine},
			author={Neukamm, Stefan},
			author={Otto, Felix},
			title={A regularity theory for random elliptic operators},
			date={2014},
			note={arXiv:1409.2678v2},
		}
		
		\bib{GNO20v3}{misc}{
			author={Gloria, Antoine},
			author={Neukamm, Stefan},
			author={Otto, Felix},
			title={A regularity theory for random elliptic operators and
				homogenization},
			date={2015},
			note={arXiv:1409.2678v3},
		}
		
		\bib{GNO20}{article}{
			author={Gloria, Antoine},
			author={Neukamm, Stefan},
			author={Otto, Felix},
			title={A regularity theory for random elliptic operators},
			date={2020},
			ISSN={1424-9286},
			journal={Milan J. Math.},
			volume={88},
			number={1},
			pages={99\ndash 170},
			url={https://doi.org/10.1007/s00032-020-00309-4},
		}
		
		\bib{GNO21}{article}{
			AUTHOR = {Gloria, Antoine},
			AUTHOR = {Neukamm, Stefan},
			AUTHOR = {Otto, Felix},
			TITLE = {Quantitative estimates in stochastic homogenization for correlated coefficient fields},
			JOURNAL = {Anal. PDE},
			FJOURNAL = {Analysis \& PDE},
			VOLUME = {14},
			YEAR = {2021},
			NUMBER = {8},
			PAGES = {2497--2537},
			ISSN = {2157-5045},
			MRCLASS = {35J15 (35J47 60H25 74Q05)},
			MRNUMBER = {4377865},
			%DOI = {10.2140/apde.2021.14.2497},
			%URL = {https://doi.org/10.2140/apde.2021.14.2497},
		}
		
%		\bib{Iwa98}{article}{
%			author={Iwaniec, Tadeusz},
%			title={{The Gehring Lemma}},
%			pages={181\ndash 204},
%			book={
%				title = {{Quasiconformal Mappings and Analysis}},
%				subtitle = {{A Collection of Papers Honoring F.W. Gehring}},
%				editor = {Duren, Peter},
%				editor = {Heinonen, Juha},
%				editor = {Osgood, Brad},
%				editor = {Palka, Bruce},
%				publisher = {Springer},
%				address = {New York, NY},
%				date = {1998},
%				doi = {10.1007/978-1-4612-0605-7},
%				url = {https://doi.org/10.1007/978-1-4612-0605-7},
%			},
%		}

		\bib{Kre85}{book}{
			AUTHOR = {Krengel, Ulrich},
			TITLE = {Ergodic theorems},
			SERIES = {De Gruyter Studies in Mathematics},
			VOLUME = {6},
			NOTE = {With a supplement by Antoine Brunel},
			PUBLISHER = {Walter de Gruyter \& Co., Berlin},
			YEAR = {1985},
			PAGES = {viii+357},
			ISBN = {3-11-008478-3},
			MRCLASS = {28-02 (28Dxx 47A35)},
			MRNUMBER = {797411},
			MRREVIEWER = {E. Flytzanis},
			%DOI = {10.1515/9783110844641},
			%URL = {https://doi.org/10.1515/9783110844641},
		}

		\bib{MO15}{article}{
			AUTHOR = {Marahrens, Daniel}, 
			AUTHOR = {Otto, Felix},
			TITLE = {Annealed estimates on the {G}reen function},
			JOURNAL = {Probab. Theory Related Fields},
			FJOURNAL = {Probability Theory and Related Fields},
			VOLUME = {163},
			YEAR = {2015},
			NUMBER = {3-4},
			PAGES = {527--573},
			ISSN = {0178-8051},
			MRCLASS = {35B27 (35J08 35J25 39A70 60H25)},
			MRNUMBER = {3418749},
			%DOI = {10.1007/s00440-014-0598-0},
			%URL = {https://doi.org/10.1007/s00440-014-0598-0},
		}
		
		\bib{Sap17}{article}{
			AUTHOR = {Sapozhnikov, Artem},
			TITLE = {Random walks on infinite percolation clusters in models with
				long-range correlations},
			JOURNAL = {Ann. Probab.},
			FJOURNAL = {The Annals of Probability},
			VOLUME = {45},
			YEAR = {2017},
			NUMBER = {3},
			PAGES = {1842--1898},
			ISSN = {0091-1798},
			MRCLASS = {60K37 (58J35 60G15 60K35)},
			MRNUMBER = {3650417},
			MRREVIEWER = {Ji\v{r}\'{\i} \v{C}ern\'{y}},
			%DOI = {10.1214/16-AOP1103},
			%URL = {https://doi.org/10.1214/16-AOP1103},
		}
		
		\bib{Sim97}{article}{
			AUTHOR = {Simon, Leon},
			TITLE = {Schauder estimates by scaling},
			JOURNAL = {Calc. Var. Partial Differential Equations},
			FJOURNAL = {Calculus of Variations and Partial Differential Equations},
			VOLUME = {5},
			YEAR = {1997},
			NUMBER = {5},
			PAGES = {391--407},
			ISSN = {0944-2669},
			MRCLASS = {35H05 (35B45 35E20 35S15)},
			MRNUMBER = {1459795},
			MRREVIEWER = {Qing Yi Chen},
			%DOI = {10.1007/s005260050072},
			%URL = {https://doi.org/10.1007/s005260050072},
		}
		
		\bib{Tru71}{article}{
			AUTHOR = {Trudinger, Neil S.},
			TITLE = {On the regularity of generalized solutions of linear,
				non-uniformly elliptic equations},
			JOURNAL = {Arch. Rational Mech. Anal.},
			FJOURNAL = {Archive for Rational Mechanics and Analysis},
			VOLUME = {42},
			YEAR = {1971},
			PAGES = {50--62},
			ISSN = {0003-9527},
			MRCLASS = {35D10},
			MRNUMBER = {344656},
			MRREVIEWER = {V.-V. Olariu},
			%DOI = {10.1007/BF00282317},
			%URL = {https://doi.org/10.1007/BF00282317},
		}
	\end{biblist}
\end{bibdiv}

\end{document}